\documentclass[a4paper,10pt]{article}

\usepackage{amssymb, amsmath, mathtools, amsfonts, times, amsthm, amscd, setspace, leftidx, enumitem, xfrac, xcolor}
\usepackage{geometry}\geometry{margin=1.1in}
\input xy 
\xyoption{all}
\usepackage{scalerel}

\usepackage[titletoc,title]{appendix}
\usepackage{graphicx}
\graphicspath{ {Graphics/} }
\usepackage{pdflscape}

\usepackage{array,multirow}
\theoremstyle{plain}

\newtheorem{proposition}{Proposition}[section]
\newtheorem{defi}[proposition]{Definition}
\newtheorem{lemma}[proposition]{Lemma}
\newtheorem{thm}[proposition]{Theorem}
\newtheorem{corollary}[proposition]{Corollary}
\newtheorem{rmk}[proposition]{Remark}
\newtheorem{prop}[proposition]{Proposition}
\newtheorem{ex}[proposition]{Example}

\newcommand{\cnc}[0]{\nabla}
\newcommand{\ct}[1]{\mathcal{#1}}
\newcommand{\ov}[1]{\overline{#1}}

\newcommand{\K}[0]{\mathbb{K}}

\newcommand{\XA}[0]{\mathfrak{X}}

\newcommand{\sbt}[0]{{\scriptstyle\bullet}}

\newcommand{\bim}[1]{\prescript{}{#1}{\mathcal{M}}_{#1}}
\newcommand{\fr}[1]{\mathfrak{#1}}
\newcommand{\tn}[0]{\otimes}

\newcommand{\op}[0]{\mathrm{op}}
\newcommand{\cvl}[0]{\mathrm{coev}}
\newcommand{\evl}[0]{\mathrm{ev}}
\newcommand{\cvr}[0]{\underline{\mathrm{coev}}}
\newcommand{\evr}[0]{\underline{\mathrm{ev}}}

\newcommand{\id}[0]{\mathrm{id}}
\newcommand{\Hom}[0]{\mathrm{Hom}}
\newcommand{\flip}[0]{\mathfrak{flip}}

\newcommand{\lmod}[1]{\prescript{}{#1}{\mathcal{M}}}

\newcommand{\source}[0]{\mathsf{s}}
\newcommand{\target}[0]{\mathsf{t}}

\newcommand{\ab}[1]{{#1}_{\mathrm{ab}}}
\newcommand{\iso}[1]{{#1}_{\mathrm{iso}}}

\newcommand{\arr}[2]{{#1}\rightarrow{#2}}
\newcommand{\arl}[2]{{#2}\leftarrow{#1}}
\newcommand{\arrtwo}[3]{{#1}\scaleto{\rightarrow}{1.7pt} {#2}\scaleto{\rightarrow}{1.7pt} {#3} }
\newcommand{\arltwo}[3]{{#3}\leftarrow{#2}\leftarrow{#1}}
\newcommand{\dist}[2]{{\scriptscriptstyle({#1},{#2})}}


\newcommand{\ls}[1]{{}_{{\source}} {#1}}
\newcommand{\lt}[1]{{}_{{\target}} {#1}}
\newcommand{\rs}[1]{{#1}{}_{{\source}} }
\newcommand{\rt}[1]{{#1}{}_{{\target}}}
\newcommand{\tM}[1]{\lt{#1}}

\newcommand{\Mto}[1]{{#1}{}_{{{\target}^{\op}}}}

\newcommand{\sMt}[1]{{}_{{\source}} {#1}{}_{{\target}}}

\newcommand{\sMs}[1]{{}_{{\source}} {#1}{}_{{\source}}}
\newcommand{\tMt}[1]{{}_{{\target}} {#1}{}_{{\target}}}
\newcommand{\sMto}[1]{{}_{{\source}} {#1}{}_{t^{\op}}}

\newcommand{\genleft}[4]{\ooalign{\raisebox{-0.6ex}{$\scriptstyle \arl{#3}{#4}$}\cr\raisebox{0.6ex}{$\scriptstyle \arl{#1}{#2}$}}}
\newcommand{\genright}[4]{\ooalign{\raisebox{-0.6ex}{$\scriptstyle \arr{#3}{#4}$}\cr\raisebox{0.6ex}{$\scriptstyle \arr{#1}{#2}$}}}
\newcommand{\genlefty}[3]{\ooalign{\raisebox{-0.6ex}{$\scriptstyle \hspace{2mm} #3$}\cr\raisebox{0.6ex}{$\scriptstyle \arl{#1}{#2}$}}}
\newcommand{\genrighty}[3]{\ooalign{\raisebox{-0.6ex}{$\scriptstyle \arr{#1}{#2}$}\cr\raisebox{0.6ex}{$\scriptstyle \hspace{2mm} #3$}}}
\newcommand{\points}[2]{\left(\ooalign{\raisebox{-0.6ex}{$\scriptstyle #2$}\cr\raisebox{0.6ex}{$\scriptstyle #1$}}\right)}
\pagestyle{headings}

\allowdisplaybreaks
\begin{document}

\title{Isotopy Quotients of Hopf Algebroids and the Fundamental Groupoid of Digraphs}
\author{Aryan Ghobadi \\ \small{Queen Mary University of London }\\\small{ School of Mathematics, Mile End Road}\\\small{ London E1 4NS, UK }\\ \small{Email: a.ghobadi@qmul.ac.uk}}
\date{}

\maketitle
\begin{abstract}
We build on our construction of Hopf algebroids from noncommutative calculi under the further assumption of surjectivity for the calculus. We also introduce the notions of Hopf ideals and isotopy quotients for arbitrary Hopf algebroids. Using these ingredients, we prove a Riemann-Hilbert correspondence for digraphs, by showing that the groupoid algebra of the fundamental groupoid of a digraph is isomorphic to the isotopy quotient of the Hopf algberoid corresponding to flat connections over the digraph.
\end{abstract}
\begin{footnotesize}2020\textit{ Mathematics Subject Classification}: Primary 16T99, 58B32, 05C20 ; Secondary 20L05, 16D25
\\\textit{Keywords}: Hopf algebroid, bialgebroid, noncommutative geometry, bimodule connection, groupoids, fundamental group, directed graphs\end{footnotesize}
\section{Introduction} 
The standard Riemann-Hilbert correspondence for smooth manifolds refers to the equivalence between the category of vector bundles with flat connections on a manifold and the category of representations of the fundamental path groupoid of the manifold. In the setting of noncommutative differential geometry \cite{beggs2020quantum}, one replaces a manifold with an arbitrary DGA and can still define an analogous category of flat bimodule connections over this `noncommutative space'. Moreover, we showed in \cite{ghobadi2020hopf} how to construct a Hopf algebroid $\ct{D}\XA$, which represents this category of connections and hence takes a somewhat analogous role to the classical algebra of differential operators. However, unlike the classical picture, it is far from clear in general how to extract homotopical data from this categorical approach. In this work, we show the first instance of $\ct{D}\XA$ carrying homotopical information, by focusing on the differential calculi associated to digraphs \cite{beggs2020quantum,majid2013noncommutative}. Concretely, we show that the \emph{isotopy quotient} of $\ct{D}\XA$ obtained from a digraph's differential calculus, recovers the fundamental groupoid of the digraph, as defined by Grigor'yan, Jimenez and Muranov \cite{grigor2018fundamental}. The key new aspect of our theory is the introduction of isotopy quotients of Hopf algebroids, which emerges from the classical limit where the input DGA is the de Rham complex of a smooth manifold. In this scenario, $\ct{D}\XA$ itself is too large but its isotopy quotient is the usual algebra of differential operators on the manifold representing the classical category of flat connections, and is thereby Morita equivalent to the fundamental path groupoid. Hence, we present the isotopy quotient of $\ct{D}\XA$ as our first tool for obtaining homotopical data from flat connections in the noncommutative world. 

The inspiration for defining the isotropy quotient of a Hopf algebroids comes from the theory of groupoids. A groupoid $\ct{G}$ has two components, a set of base points, $\ct{G}_{0}$, and a set of arrows between these points, $\ct{G}_{1}$. Similarly, the data for a Hopf algebroid consists of a pair of algebras $(A,H)$, where $A$ is called the base algebra and the structural morphisms on $H$, while similar to that of a Hopf algebra, are defined with respect to $A$. In fact, commutative Hopf algebroids correspond to representable presheafs of groupoids on the category of affine schemes, see Example \ref{ExPresheaf}. Hence, one can generalise various constructions related to groupoids to the setting of Hopf algebroids. For example, for any groupoid, we can consider its \emph{isotopy sub-groupoid},  which has the same base space, but only contains arrows with the same source and target. In Section \ref{SIso}, we define the corresponding notion of \emph{isotopy quotient} for Hopf algebroids, whose relation with the isotopy sub-groupoid is clarified in Examples \ref{Ex:IsoGrpd} and \ref{Ex:IsoFinGrpd}. Dual to the groupoid picture, we obtain the isotopy quotient of a Hopf algebroid $(A,H)$ by setting the image of the source and targets to be equal. The resulting algebra will no longer be a Hopf algebroid over the original base algebra, $A$, but over its abelianisation, $\ab{A}$. To obtain this result, we dedicate the first half of Section \ref{SIdeals} to developing the appropriate theory of ideals and quotients for Hopf algebroids. Despite the extensive literature on these objects, the author is not aware of any other works studying ideals for arbitrary Hopf algebroids. The reader should note that unlike Hopf algebras, Hopf algebroids do not admit antipodes in general, and the Hopf condition for an ideal in this case, \eqref{eq:HpfIdeal}, can not be simplified to stability of the ideal under the antipode. 

The notion of isotopy quotient becomes of special significance in our study of $\ct{D}\XA$, since it relates this Hopf algebroid and the universal enveloping algebroid of a Lie-Rinehart algebra, see Section \ref{SBialgIso}. While this connection was noted in \cite{ghobadi2020hopf}, it was only at the level of algebras, and we did not formally prove that we can recover the Hopf algebroid structure of the enveloping algebroid in this way. Our work in Section \ref{SIso} now fills this gap. Moreover, we dedicate Section \ref{SSurjective} to expanding our construction of $\ct{D}\XA$ under the assumption that the calculus is surjective. This is a natural condition when looking back at differential geometry and is usually assumed. The reason behind not assuming surjectivity in \cite{ghobadi2020hopf} was providing a construction of Hopf algebroids, with minimal assumptions. In particular, $H\XA^{1}$ and $\ct{D}\XA$ are rare examples of noncommutative and noncocommutative Hopf algebroids over arbitrary base algebras, which do not arise from Tannaka-Krein type reconstruction as in \cite{hai2008tannaka,shimizu2019tannaka,szlachanyi2009fiber}. However, the downside of working with these algebras is that they are rather large in general. In Section \ref{SSurjective}, we show that their description can be thoroughly simplified when assuming that the calculus is surjective.

Using the work done in Section \ref{SSurjective}, we go on to describe the Hopf algebroids $H\XA^{1}$ and $\ct{D}\XA$ for the calculus of a given digraph. For this description we introduce the following notion: given a digraph $D=(V,E)$, we define its \emph{double}, denoted by $D^{\mathrm{db}}$ as the digraph with $V\times V$ as its vertex set and the following families of edges:
\begin{itemize}
\itemsep0em 
\item An edge from $(a,c)$ to $(b,d)$ corresponding to any pair of edges $\arr{a}{b}, \arr{c}{d}\in E$
\item An edge from $(b,d)$ to $(a,c)$ corresponding to any pair of edges $\arr{a}{b}, \arr{c}{d}\in E$
\item An edge from $(a,c)$ to $(b,c)$ corresponding to any edge $\arr{a}{b}\in E$ and any vertex $c\in V$
\item An edge from $(a,d)$ to $(a,c)$ corresponding to any edge $\arr{c}{d}\in E$ and any vertex $a\in V$
\end{itemize} 
In the usual terminology of digraphs we can decompose the set of edges of $D^{\mathrm{db}}$ as the union of the edges of  $D\square D^{-1}$, $D\times D$ and $D^{-1}\square D^{-1}$, where $D^{-1}$ has the same vertex set of $D$ with reversed edges, see Chapter 10 of \cite{bang2018classes}. In Theorems \ref{THDig} and \ref{TDXDigraph}, we demonstrate that the algebras $H\XA^{1}$ and $\ct{D}\XA$ obtained from the digraph calculus, can be viewed as quotients of the path algebra of the double of $D$, denoted by $\K D^{\mathrm{db}}$, respectively. In \cite{ghobadi2020hopf}, we already demonstrated that for a digraph calculus, Beggs and Brzezi{\'n}ski's algebra of vector fields, denoted by $T\XA^{1}_{\sbt}$ in \cite{beggs2014noncommutative}, would be isomorphic to the path algebra of the digraph, and also provided a closely related description $H\XA^{1}$ and $\ct{D}\XA$ for an arbitrary quiver. However, our description of $H\XA^{1}$ here goes beyond that of \cite{ghobadi2020hopf} and our result for $\ct{D}\XA$ completely differs from that of \cite{ghobadi2020hopf} due to our novel choice of $\Omega^{2}_{\mathrm{top}}$ for extending the first-order differential calculus of the digraph. See Remark \ref{ROmega2s} for additional details on the latter point. 

Finally, we prove our Riemann-Hilbert type result in Section \ref{SIsoDigraph}. The fundamental groupoid, $\Pi_{D}$, of a digraph, $D$, was introduced in \cite{grigor2018fundamental}, where its homotopical properties are studied. The groupoid $\Pi_{D}$ has $V$ as its set of base points, and is defined as the quotient of the free groupoid generated by the arrows $\arr{a}{b}\in E$, which are denoted by $(\arl{a}{b})$ in the groupoid, where inverses, $(\arr{a}{b})$, are formally added, by the following two types of relations:
\begin{itemize}
\itemsep0em 
\item For any triangle $(p,q,r)$ in the digraph, where $\arr{p}{q},\arr{q}{r},\arr{p}{r}\in E$, we have $(\arl{q}{r}).(\arl{p}{q})= (\arl{p}{r})$
\item Given any square $(p,q,q',r)$ in the digraph, where $\arr{p}{q},\arr{q}{r},\arr{p}{q'},\arr{q'}{r}\in E$, we have $(\arl{q}{r}).(\arl{p}{q})= (\arl{q'}{r}).(\arl{p}{q'})$
\end{itemize}
Given such a groupoid we obtain a natural Hopf algebroid called its groupoid algebra, Example \ref{ExGrpd}. In Corollary \ref{CDXFund}, we show that the isotopy quotient of $\ct{D}\XA$ is isomorphic to the groupoid algebra $\K \Pi_{D}$.

\textbf{Organisation.} In Section \ref{SIdeals} we introduce the theory of Hopf ideals and isotopy quotients for Hopf algebroids. This section can be read on its own and only requires background on Hopf algebroids which is present in preliminary Section \ref{SPrelimHopf}. Preliminary Section \ref{SPrelimNCG} provides a review of the main results of \cite{ghobadi2020hopf} and how they will be adapted under the assumption of the calculus being surjective in Section \ref{SSurjective}. In Section \ref{SGrph} we focus on the specific applications of our work for digraph calculi.

\textbf{Acknowledgements.} The author would firstly like to thank Laiachi El Kaoutit and the University of Granada for their hospitality during the author's visit in September 2020, where much of Section \ref{SIdeals} was developed, as well as the LMS for funding this research visit under the early career research grants, reference number ECR-1920-42. The author would also like thank Shahn Majid for numerous fruitful conversations regarding this work and Paolo Saracco for helpful comments. 
\section{Preliminaries} 
\textbf{Notation.} Throughout this work $\K$ will denote an arbitrary field and $A$ a $\K$-algebra. We adapt the notion of \cite{ghobadi2020hopf} and denote the multiplication in $H\XA^{1}$ and $\ct{D}\XA$ by $\sbt$, while multiplication in all other algebras will be denoted by the standard $.$, or $m$ when viewed as a map. We only consider small groupoids in this work and will use the notation $\ct{G}= (\ct{G}_{0},\ct{G}_{1})$, where $\ct{G}_{0}$ denotes the set of objects and $\ct{G}_{1}$ the set of morphisms, but we will refer to elements of $\ct{G}_{1}$ as arrows of the groupoid rather than morphisms. In the context of a groupoid, we denote the composition of arrows by $.$ and the standard source and target maps by $s,t:\ct{G}_{1}\rightarrow \ct{G}_{0}$. 
\subsection{Hopf algebroids constructed from NCG}\label{SPrelimNCG}
In \cite{ghobadi2020hopf}, we constructed several bialgebroids and Hopf algebroids representing various subcategories of bimodule connections corresponding to suitable differential calculi. In this section, we will assume the reader is familiar with basic notions such as bimodule connections and refer the reader to \cite{beggs2020quantum} and \cite{ghobadi2020hopf} for additional details. We will present a summary of our results in \cite{ghobadi2020hopf} and provide a picture of how the work in Section \ref{SSurjective} fits within this framework. 

First, let us review the ingredients for our construction. Let $(A,\oplus_{0 \leq i}\Omega^{n},d,\wedge)$ be a DGA. The additional datum that our construction in \cite{ghobadi2020hopf} requires are the following: 
\begin{enumerate}[label=(\Roman*), leftmargin=0.7cm] 
\item We assume that $\Omega^{1}$ and $\Omega^{2}$ are Frobenius as $A$-bimodules. This is equivalent to saying that for each $n\in \lbrace 1,2 \rbrace$ there exist a bimodule $\XA^{n}\cong \mathrm{Hom}_{A}(\Omega^{n},A)\cong \prescript{}{A}{\mathrm{Hom}}(\Omega^{n},A) $, which is both left and right dual to $\Omega^{n}$ in the monoidal category $\bim{A}$. Additionally, we choose a family of duality morphisms 
\begin{align}
\cvl^{n}: A \rightarrow \Omega^{n}\otimes\XA^{n}, \quad & \quad \evl: \XA^{n}\otimes\Omega^{n} \rightarrow A
\\ \cvr^{n}: A \rightarrow \XA^{n}\otimes\Omega^{n}, \quad &\quad \evr^{n}: \Omega^{n}\otimes \XA^{n}\rightarrow A
\end{align}
for $n\in \lbrace 1,2 \rbrace$ and adapt the notation 
\begin{equation} \cvl^{1}(1)= \sum_{i} \omega_{i}\otimes x_{i}, \quad \quad \cvr^{1}(1)= \sum_{j} y_{j}\otimes \rho_{j}
\end{equation}
We will avoid writing $\sum_{i}$ and $\sum_{j}$ and whenever pairs $x_{i}$, $\omega_{i}$ and $\rho_{j}$, $y_{j}$ appear with matching indices, summation will be implicitly assumed.
\item We assume that $\wedge :\Omega^{1}\tn \Omega^{1}\rightarrow \Omega^{2}$ is a pivotal morphism. Explicitly, this means that for any $x^{2}\in \XA^{2}$, the equation
\begin{equation}\label{EqPivWedge}
\evl^{2} (x^{2} \tn \omega_{i}\wedge \omega_{j}) x_{j}\otimes x_{i} = y_{i}\otimes y_{j} \evr^{2}(\rho_{j}\wedge \rho_{i} \otimes x^{2} ) 
\end{equation}
holds.
\end{enumerate}
Describing (invertible/extendible) bimodule connections $(M,\cnc , \sigma)$, as done in \cite{beggs2020quantum}, does not require the additional data of the duality morphisms on the DGA. However in \cite{ghobadi2020hopf}, we consider bimodule connections where $\sigma$ is compatible with this additional data. To distinguish between these settings we will call the additional data of the duality morphisms on the DGA a \emph{pivotal structure}, $\Pi$, on it. Hence, we can make the following definition. 

\begin{defi}[\cite{ghobadi2020hopf}]\label{DABC} If $\Pi$ is a pivotal structure on $(A,\oplus_{0 \leq i}\Omega^{n},d,\wedge)$, we call an invertible bimodule connection  $(M,\cnc , \sigma)$ a $\Pi$-\emph{adaptable bimodule connection}, or $\Pi$-\emph{ABC} for short, if the induced morphisms 
\begin{align}
(\evl^{n}\otimes \id_{M\otimes \XA^{n}})(\id_{\XA^{n}}\otimes\sigma\otimes \id_{\XA^{n}})(\id_{\XA^{n}\otimes M}\otimes \cvl^{n}): \XA^{n}\otimes M\rightarrow M\otimes\XA^{n} &\label{EqsigX}
\\ (\id_{\XA^{n}\otimes M}\otimes \evr^{n} )(\mathrm{id}_{\XA^{n}}\otimes\sigma^{-1}\otimes \id_{\XA^{n}})(\cvr^{n}\otimes \id_{M\otimes \XA^{n}}): M\otimes\XA^{n} \rightarrow \XA^{n}\otimes M &\label{EqXsig}
\end{align}
are inverses for $n=1$. If $\Omega^{2}$ and $\wedge$ are as above, we call an extendible bimodule connection, $\Pi$-adaptable if the morphisms \eqref{EqsigX} and \eqref{EqXsig} are inverses for $n=2$. 
\end{defi}

Observe that given an invertible bimodule morphism $\sigma: M\otimes \Omega^{n}\rightarrow \Omega^{n}\otimes M$, the morphisms \eqref{EqsigX} and \eqref{EqXsig} being inverses is equivalent to stating that there exists an invertible morphism $\sigma_{\XA}: \XA^{n}\otimes M\rightarrow M\otimes\XA^{n}$ such that the duality morphisms \emph{braid} past $M$, via $\sigma$ and $\sigma_{\XA}$. More detail on this aspect of the theory can be found in Section 6.2 of \cite{ghobadi2021pivotal}.

Recall that for any algebra $R$, we can consider the free monoid generated by a $R$-bimodule $M$ in $\bim{R}$, on the space $R\oplus M\oplus M\tn_{R} M \oplus \cdots $, which we will denote by $T_{R}M$. Additionally, we denote the free product (coproduct in the category of $\K$-algebras) of two algebras $R$ and $S$ by $R\star S$ and consider the algebras $A^{e}$, $T_{\K} \XA^{1}$, $R_{1}^{n}:= T_{\K}(\Omega^{n}\tn_{\K} \XA^{n})$ and $R_{2}^{n}:= T_{\K}(\XA^{n}\tn_{\K} \Omega^{n})$ for $n\in \lbrace 1,2 \rbrace$. As an algebra, the Hopf algebroid $\ct{D}\XA$ is a quotient of the free product of the mentioned algebras by a series of relations. We denote the generators of the mentioned algebras by elements of the form $a\ov{b}\in A^{e}$, $x \in \XA^{1}$, $(x,\omega )\in \XA^{1}\tn_{\K} \Omega^{1}$ and $(\omega,x)\in \Omega^{n}\tn_{\K} \XA^{n}$, respectively, and denote the multiplication in $\ct{D}\XA$ by $\sbt$. With this notation, $\ct{D}\XA$ is defined by the following relations 
\begin{align}
 a\sbt x=ax& \label{EqAX}
 \\  x\sbt a =xa+&\mathrm{ev}(x,da)\label{EqXA}
 \\ x\sbt\ov{a}=\ov{a}\sbt x+& (x,da) \label{EqBXA}
\\ a\ov{a'}\sbt(x^{n},\omega^{n})\sbt b\ov{b'}= (ax^{n}b, b'\omega^{n} a') \quad \quad& a\ov{a'}\sbt(\omega^{n},x^{n})\sbt b\ov{b'}= (a\omega^{n} b, b'x^{n} a')\label{EqYFrak}
\\ (\omega_{i},y)\sbt (x_{i},\omega )=&\ov{\underline{\mathrm{ev}}(\omega\otimes y)} \label{EqRelInv1}
\\   (x,\rho_{j})\sbt(\omega, y_{j})=&\mathrm{ev}(x\otimes\omega)\label{EqRelInv2} 
 \\ (y_{j},\omega)\sbt (\rho_{j},x)=&\ov{\mathrm{ev}(x\tn\omega)} \label{EqRelHpf1}
\\  (\omega, x_{i})\sbt(x,\omega_{i})=&\underline{\mathrm{ev}}(\omega\tn x)\label{EqRelHpf2}
\\ \fr{ev}(x^{2}\otimes d\omega_{i})\sbt x_{i} - \mathfrak{ev}(x^{2}\otimes &\omega_{j}\wedge\omega_{k}) \sbt x_{k}\sbt x_{j} =0\label{EqFlat}
\\\mathfrak{ev}( x^{2}\otimes \omega_{i}\wedge \omega_{j}) [x_{j}\sbt(x_{i},\omega)+(x_{j},\omega)\sbt x_{i}]& = \mathfrak{ev}(x^{2}\otimes d\omega_{i} )(x_{i},\omega ) - (x^{2},d\omega) \label{EqFlExt}
\\ \mathfrak{ev}(x^{2}\otimes \omega_{i}\wedge \omega_{j}) \sbt (x_{j},\rho)\sbt(x_{i},\omega)&= (x^{2},\omega\wedge \rho) \label{EqExt1}
\\ \ov{\underline{\mathfrak{ev}}(\rho_{i}\wedge \rho_{j}\otimes x^{2})}\sbt(\omega ,y_{i})\sbt(\rho, y_{j} ) &=(\omega\wedge\rho , x^{2})\label{EqExt2}
\end{align}
where $x^{n}\in \XA^{n}$ and $\omega^{n}\in \Omega^{n}$ for $n=1,2$. In \cite{ghobadi2020hopf}, we build up to the construction of $\ct{D}\XA$, by considering several other subcategories of bimodule connections, all of which can be represented by an algebra with some of the above relations. In the following table we summarise the notations of \cite{ghobadi2020hopf} and compare the presentation of the various algebras computed for an arbitrary DGA, to the simplified presentations of Section \ref{SSurjective}, where the calculus is assumed to be surjective.

\begin{center}
\begin{table}[h]
\begin{tabular}{ |c|c|c|c|c|} 
 \hline
\multirow{2}{*}{H/B}& \multirow{2}{*}{Algebra} & \multirow{2}{*}{Generators} & Relations  & Category of   \\ 
 & & &{\small (As numbered in this work)} & Representations  \\
\hline\hline 
 \multirow{2}{*}{-} & \multirow{2}{*}{$T\XA^{1}_{\sbt}$} & \multirow{2}{*}{$A\star T_{\K}\XA^{1}$} & \multirow{2}{*}{\eqref{EqAX}, \eqref{EqXA}} & left connections \\
 & & & & [Proposition 6.15 \cite{beggs2020quantum}]\\
 \hline 
\multirow{3}{*}{B} & \multirow{3}{*}{$B\XA^{1}$} & $A^{e}\star T_{\K}\XA^{1}\star R_{1}^{1}$ &\eqref{EqAX}, \eqref{EqXA}, \eqref{EqBXA}, \eqref{EqYFrak}  & \multirow{2}{*}{left bimodule}   \\\cline{3-4} 
&& Surjctive Calculus: &\multirow{2}{*}{\eqref{EqAX}, \eqref{EqXA}, \eqref{EqBXA}, \eqref{EqSurjBXA}} & \multirow{2}{*}{connections}\\
&&  $A^{e} \star T_{\K}\XA^{1}$ & & \\
 \hline
\multirow{3}{*}{B} & \multirow{3}{*}{$IB\XA^{1}$} & $A^{e}\star T_{\K}\XA^{1}\star R_{1}^{1} \star R_{2}^{1}$  & \eqref{EqAX}, \eqref{EqXA}, \eqref{EqBXA}, \eqref{EqYFrak}, \eqref{EqRelInv1}, \eqref{EqRelInv2} & invertible \\\cline{3-4} 
&& Surjective Calculus:  &\eqref{EqAX}, \eqref{EqXA}, \eqref{EqBXA}, \eqref{EqSurjBXA}, \eqref{EqSurjInvDef} & bimodule \\
 &&$A^{e} \star T_{\K}\XA^{1}\star T_{\K}\mathfrak{Y}^{1}$&\eqref{EqSurjInv1}, \eqref{EqSurjInv2} & connections \\
 \hline
 \multirow{4}{*}{H} & \multirow{4}{*}{$H\XA^{1}$} & \multirow{2}{*}{$A^{e}\star T_{\K}\XA^{1}\star R_{1}^{1} \star R_{2}^{1}$} &\eqref{EqAX}, \eqref{EqXA}, \eqref{EqBXA}, \eqref{EqYFrak}, \eqref{EqRelInv1}, \eqref{EqRelInv2},  & $\Pi$-adaptable \\
  &&&\eqref{EqRelHpf1}, \eqref{EqRelHpf2}& bimodule\\\cline{3-4} 
&& Surjective Calculus:  &\eqref{EqAX}, \eqref{EqXA}, \eqref{EqBXA}, \eqref{EqSurjBXA}, \eqref{EqSurjInvDef} & connections\\
 &&$A^{e} \star T_{\K}\XA^{1}\star T_{\K}\mathfrak{Y}^{1}$&\eqref{EqSurjInv1}, \eqref{EqSurjInv2}, \eqref{EqSurjHpf1}, \eqref{EqSurjHpf2} &(\emph{dependent on }$\Pi$)\\
 
 \hline\hline
  \multirow{2}{*}{-} & \multirow{2}{*}{$\prescript{}{A}{\ct{D}}$} & \multirow{2}{*}{$A\star T_{\K}\XA^{1}$} &\multirow{2}{*}{\eqref{EqAX}, \eqref{EqXA}, \eqref{EqFlat}} & flat left connections \\
 & & & & [Corollary 6.24 \cite{beggs2020quantum}]\\
 \hline
  \multirow{4}{*}{H} & \multirow{4}{*}{$\ct{D}\XA$} & $A^{e}\star T_{\K}\XA^{1}\star R_{1}^{1} \star R_{2}^{1}$ &\eqref{EqAX}, \eqref{EqXA}, \eqref{EqBXA}, \eqref{EqYFrak}, \eqref{EqRelInv1}, \eqref{EqRelInv2},  & flat extendible\\
  &&$ \star R_{1}^{2}\star R_{2}^{2}$ &\eqref{EqRelHpf1}, \eqref{EqRelHpf2}, \eqref{EqFlat}, \eqref{EqFlExtraa}, \eqref{EqExt1},\eqref{EqExt2} &   $\Pi$-adaptable \\\cline{3-4} 
&& Surjective calculus and $\wedge$: &\eqref{EqAX}, \eqref{EqXA}, \eqref{EqBXA},\eqref{EqFlat}, \eqref{EqSurjBXA}, \eqref{EqSurjInvDef}, & bimodule connections  \\
 &&$A^{e} \star T_{\K}\XA^{1}\star T_{\K}\mathfrak{Y}^{1}$& \eqref{EqSurjInv1},\eqref{EqSurjInv2}, \eqref{EqSurjHpf1}, \eqref{EqSurjHpf2}, \eqref{EqSujExt1}, \eqref{EqSujExt2} & (\emph{dependent on }$\Pi$)\\
 \hline
\end{tabular}
\caption{Summary of the results in \cite{ghobadi2020hopf}, in comparison with the work in Section \ref{SSurjective}. Here H/B are short for Hopf algebroid/Bialgebroid}
\end{table}
\end{center}

There is a subtly regarding the connections represented by $\ct{D}\XA$ which is omitted in the above table.  The category of left modules over this algebra is isomorphic to the category of flat extendible $\Pi$-ABCs $(M,\cnc , \sigma )$, satisfying the additional condition 
\begin{equation}\label{EqFlExtraa}
(\wedge \otimes M)[ (\mathrm{id}_{\Omega^{1}}\otimes\sigma)(\cnc \otimes \mathrm{id}_{\Omega^{1}}) +( \mathrm{id}_{\Omega^{1}}\otimes \cnc)\sigma]= (d \otimes \mathrm{id}_{M})\sigma - \sigma_{2}(\mathrm{id}_{M}\otimes d)
\end{equation} 
where this additional condition is hidden in relation \eqref{EqFlExt}. However as noted in \cite{ghobadi2020hopf} and in Lemma 4.12 of \cite{beggs2020quantum}, when the calculus is surjective this condition holds for any flat extendible connection and relation \eqref{EqFlExt} becomes redundant. Since in this work we will focus on the case where the calculus is surjective, we will no longer mention relation \eqref{EqFlExt} and equation \eqref{EqFlExtraa}.

\subsection{Bialgebroids and Hopf algebroids}\label{SPrelimHopf}
In this section we will briefly recall the definition of a bialgebroid and a Hopf algebroid and set our notation. 

An $A^{e}$-\emph{ring} structure on $B$ consists of a $\K$-algebra structure $(\mu ,1_{B})$ on $B$ with an algebra homomorphism $\eta : A^{e}\rightarrow B$. The induced algebra morphisms $A\rightarrow B$ and $A^{\op}\rightarrow B$, are then denoted by $\source$ and $\target$ respectively, and called the \emph{source} and \emph{target} morphisms. Additionally, $B$ is equipped with an $A$-bimodule structure and an $A^{\op}$-bimodule structure, denoted by $\sMs{B}$ and $\tMt{B}$, respectively, while $\Mto{B}$ denotes the left $A^{\op}$-module $\tM{B}$ viewed as a right $A$-module. We will also adapt the notation of \cite{ghobadi2020hopf} and denote the image of elements from $A$ in $B$ by simply writing $a$ instead of $\source (a)$ and similarly write $\ov{a}$ instead of $\target(\ov{a})$ for $\ov{a}\in A^{\op}$, whenever possible.   

An $A$-\emph{coring} structure on $B$ consists of $A$-bimodule maps $\Delta :\sMto{B}\rightarrow \sMto{B}\otimes \sMto{B}$ and $\epsilon :\sMto{B}\rightarrow A$ satisfying
\begin{align} b_{(1)}\otimes(b_{(2)})_{(1)}\otimes(b_{(2)})_{(2)}=&(b_{(1)})_{(1)}\otimes(b_{(1)})_{(2)}\otimes b_{(2)} \label{EqDelAss} \\ 
\epsilon (b_{(1)})b_{(2)} =b&= \ov{\epsilon (b_{(2)})}b_{(1)} \label{EqCoun}\\ 
\Delta (br\ov{s})=& b_{(1)} r\otimes b_{(2)} \ov{s} \label{EqDelrs}  \\ 
\epsilon (br) =& \epsilon (b\ov{r})\label{EqCounrs}
\end{align} 
for any $b\in B$ and $r,s\in A$, where $\Delta (b)= b_{(1)}\otimes b_{(2)}$ is denoted by Sweedler's notation. A \emph{left} $A$-\emph{bialgebroid} structure on $B$ consists of an $A^{e}$-ring structure and an $A$-coring structure on $B$ satisfying  
\begin{align}
(bb')_{(1)}\otimes (bb')_{(2)} &=b_{(1)}b_{(1)}' \otimes b_{(2)}b_{(2)}' , \label{EqDelMul}
\\ \Delta (1_{B})&=1_{B}\otimes 1_{B}\label{Eq:B5}
\\\epsilon (1_{B} )&=1_{A}\label{Eq:B6} 
\\ \epsilon (b b')&= \epsilon ( b\epsilon (b') )\label{EqCounMul}
\end{align}
for any $b,b'\in B$. From here onwards, a bialgebroid will always signify a left bialgebroid and be written as a pair $(A,B)$.

Recall from \cite{schauenburg2000algebras}, a bialgebroid $(A,H)$ is called a \emph{left Hopf algebroid} if the morphism 
\begin{equation}\label{EqHopf}
\beta :\rt{H}\tn_{A}\lt{H} \longrightarrow \Mto{H}\tn_{A} \ls{H}, \quad b\tn_{A} b'\longmapsto b_{(1)}\tn_{A} b_{(2)}b' 
\end{equation}
is invertible. In this case, we denote $ \beta(b\tn_{A} 1_{H}) =b_{(+)}\tn_{A} b_{(-)}$. It follows by definition [Proposition 3.7 \cite{schauenburg2000algebras}] that 
\begin{align}
(\source (a))_{(+)}\tn_{A^{\op}} (\source (a))_{(-)} &= \source (a)\tn_{A^{\op}} 1_{H}\label{eq:s+-}
\\(\target (a))_{(+)}\tn_{A^{\op}} (\target (a))_{(-)}&= 1_{H}\tn_{A^{\op}} \source (a)\label{eq:t+-}
\\(bb')_{(+)}\tn_{A^{\op}} (bb')_{(-)}&= b_{(+)}b'_{(+)}\tn_{A^{\op}} b'_{(-)}b_{(-)}\label{eq:hh'+-}
\end{align}
There's a symmetric notion of right Hopf algebroid where we require the map $\vartheta:\rs{B}\tn_{A} \ls{B}\longrightarrow  \Mto{B}\tn_{A} \ls{B}$ defined by $b\tn b' \mapsto b_{(1)}b'\tn b_{(2)}$ to be invertible with notation $\vartheta^{-1}(b\tn 1)= b_{[+]}\tn b_{[-]}$. Although the examples in consideration, $H\XA^{1}$ and $\ct{D}\XA$, satisfy this additional condition, we will avoid discussing this condition further as the relevant proofs for this condition will be symmetric. Hence, from here onwards, a Hopf algebroid will mean a left Hopf algebroid. 

Finally, recall from \cite{bohm2004hopf} that a bialgebroid $(A,H)$ is said to admit an invertible \emph{antipode} if there exists an invertible anti-algebra morphism $S: H\rightarrow H$ satisfying 
\begin{align}
S (\source(a))= \target (a)&\label{eq:antipode1}
\\ S \left( b_{(1)}\right)_{(1)}b_{(2)}\tn_{A} S \left( b_{(1)}\right)_{(2)}&= 1_{H}\tn_{A}  S \left( b\right)\label{eq:antipode2}
\\S^{-1}\left( b_{(2)}\right)_{(1)}\tn_{A}S^{-1}\left( b_{(2)}\right)_{(2)}b_{(1)} &=S^{-1}\left( b\right) \tn_{A} 1_{H}\label{eq:antipode3}
\end{align}
where \eqref{eq:antipode2} and \eqref{eq:antipode3} are equalities as elements in $\Mto{H}\tn_{A} \ls{H}$. It follows from Proposition 4.2 of \cite{bohm2004hopf} that any such bialgebroid is a Hopf algebroid, with $\beta^{-1}(b\tn_{A} b')= S^{-1}\left(S \left( b \right)_{(2)}\right)\tn_{A} S \left( b \right)_{(1)}b'$. We denote such Hopf algebroids $(A,H)$ which admit an antipode, $S$, as a triple $(A,H, S)$.

We conclude this section with some classical examples of Hopf algebroids: 
\begin{ex}\label{ExFiniteGrpd}\rm Give a finite groupoid $\ct{G}= (\ct{G}_{0},\ct{G}_{1})$, we obtain a Hopf algebroid structure, $ (\K (\ct{G}_{0}),\K(\ct{G}_{1}))$, on the algebra of functions on the groupoid. Here $\K (\ct{G}_{0})$ is the algebra of functions on the set $\ct{G}_{0}$ and is spanned by elements of the form $f_{g}$ corresponding to $g\in  \ct{G}_{0}$ satisfying $f_{g}.f_{g'}= \delta_{g,g'}f_{g}$, while $\K(\ct{G}_{1})$ denotes the algebra of functions on the set of arrows $\ct{G}_{1}$ and generated by elements $f_{ {e}}$ corresponding to arrows $ {e}\in \ct{G}_{1}$, satisfying $f_{ {e_{1}}}. f_{ {e_{2}}}=\delta_{ {e_{1}}, {e_{2}}} f_{ {e_{1}}}$. The Hopf algebroid structure is defined as follows: 
\begin{align*}
\source (f_{p}) = \sum_{ {e}: s( {e})=p} &f_{ {e}},\quad \target (f_{p}) = \sum_{ {e}: t( {e})=p} f_{ {e}},\quad \Delta ( f_{ {e}}) = \sum_{ {e_{1}}, {e_{2}}:\  {e_{2}}. {e_{1}}=  {e}} f_{ {e_{1}}}\tn f_{ {e_{2}}}
\\ \epsilon ( f_{ {e}})& =\sum_{p\in \ct{G}_{0}} \delta_{ {e}, \id_{p}}f_{p}, \quad S\left( f_{ {e}}\right)= f_{ {e}^{-1}} ,\quad  1= \sum_{ {e}\in \ct{G}_{1}} f_{ {e}}
\end{align*}
Notice that the algebras $\K (\ct{G}_{0})$ and $ \K(\ct{G}_{1})$ are well-defined regardless of the finiteness condition, however we need the groupoid to be finite for the structure maps to be well-defined.
\end{ex}
\begin{ex}\label{ExGrpd}\rm Give an arbitrary groupoid $\ct{G}= (\ct{G}_{0},\ct{G}_{1})$ with finite $\ct{G}_{0}$, we obtain a Hopf algebroid structure, $ \K\ct{G}=(\K(\ct{G}_{0}),\K\ct{G}_{1})$, which we call the \emph{groupoid ring} of $\ct{G}$. Here $\K(\ct{G}_{0})$ is as in Example \ref{ExFiniteGrpd}, but $\K\ct{G}_{1}$ is spanned as a vector space by arrows $e\in \ct{G}_{1}$ with multiplication defined as $e\sbt e'= e.e' \delta_{s(e),t(e')}$, where we denote the composition in $\ct{G}_{1}$ by $.$ and the multiplication in $\K\ct{G}_{1}$ by $\sbt$. The Hopf algebroid structure is defined as follows:
\begin{align*}
\source (f_{p}) = \target (f_{p}) =  \id_{p} ,\quad \Delta ( e) = e\tn e, \quad \epsilon ( e) =f_{t(e)}, \quad S\left( e\right)= e^{-1} ,\quad  1= \sum_{p\in \ct{G}_{0}} \id_{p}
\end{align*}
where we need finiteness on $\ct{G}_{0}$ to have a well-defined unit. 
\end{ex}
\begin{ex}\label{ExPresheaf}\rm A Hopf algebroid $(A,H)$ is said to be commutative if $A$ and $H$ are both commutative algebras. It follows by definition that $(A,H)$ admits an antipode defined by $S(h)= \target (\epsilon (h_{(+)})).h_{(-)}$ for $h\in H$, see Section 5.4.1 of \cite{kowalzig2009hopf}. Given any such Hopf algebroids, we have a pre-sheaf of groupoids associated to $(A,H)$, denoted by $\ct{H}: \mathrm{Alg}^{\op}_{\K} \rightarrow \mathrm{Grpd}$, which sends any algebra $C$ to the groupoid $(\ct{H}(C)_{0},\ct{H}(C)_{1})$, where $\ct{H}(C)_{0}=\Hom_{\K - \mathrm{alg}}(A,C)$ and $\ct{H}(C)_{1}=\Hom_{\K - \mathrm{alg}}(H,C)$. We refer to Section 3 of \cite{el2018geometrically}, for additional details.
\end{ex}
\section{Ideals and quotients of Hopf algebroids}\label{SIdeals}
In this section we introduce the appropriate notion of Hopf ideals and quotients for Hopf algebroids, as well as the notion of isotopy quotient. In contrast with the classical setting of Hopf algebras, Hopf algebroids might not have antipodes, which makes the Hopf condition more difficult to check. Additionally, Hopf algebroids are defined with respect to a base algebra and a good notion of ideal should allow for a base change. 

\subsection{Quotients with same base algebra}\label{ssec:Quotients}
Recall from Section 17.14 of~\cite{brzezinski2003corings}, a 2-sided coideal $I$ of an $A$-coring $H$ is an $A$-subbimodule of $\sMto{H}$ which satisfies $ \Delta (I) \subseteq {\rm Im} \Big( \Mto{I} \tn_{A} \ls{H} + \Mto{H} \tn_{A} \ls{I}\Big)\subset \Mto{H} \tn_{A} \ls{H}$ and $\epsilon (I)=0 $, so that $\frac{H}{I}$ becomes an $A$-coring with the projection of $\Delta$ and $\epsilon$. 

\begin{proposition}\label{prop:quotbialg} Let $(A,H )$ be a bialgebroid and $I$ a 2-sided ideal of $H$ and a 2-sided ideal of $(A,H)$. The algebra $\frac{H}{I} $ has an induced $A$-bialgebroid structure such that the natural projection $\pi: H\rightarrow \frac{H}{I}$ becomes a morphism of bialgebroids.
\end{proposition}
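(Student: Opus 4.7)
The overall strategy is to descend both the $A^{e}$-ring structure and the $A$-coring structure from $H$ to $\frac{H}{I}$, and then to read off bialgebroid compatibility from that of $H$. Since $I$ is a two-sided algebra ideal, the $\K$-algebra structure on $H$ descends to make $\pi: H \to \frac{H}{I}$ a unital algebra map. Composing $\eta: A^{e} \to H$ with $\pi$ yields an $A^{e}$-ring structure on $\frac{H}{I}$ with source $\bar{\source} = \pi \source$ and target $\bar{\target} = \pi \target$. Since any two-sided algebra ideal is automatically stable under the bimodule actions $a \cdot h \cdot a' = \source(a) h \target(a')$ (and the variants obtained by using $\source$ or $\target$ on each side), $I$ is a sub-$A$-bimodule in each of the structures $\sMs{H}$, $\tMt{H}$, $\sMto{H}$, etc., and the induced bimodule structures on $\frac{H}{I}$ agree with the projections of those on $H$.

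For the coring data, the counit descends to a well-defined $\bar{\epsilon}: \frac{H}{I} \to A$ because $\epsilon(I) = 0$ by the coideal hypothesis, and \eqref{EqCounrs} is then inherited from $H$. To descend the coproduct, I would look at the composite $(\pi \tn_{A} \pi) \circ \Delta: H \to \frac{H}{I} \tn_{A} \frac{H}{I}$. The coideal assumption gives $\Delta(I) \subseteq \mathrm{Im}\bigl(\Mto{I} \tn_{A} \ls{H} + \Mto{H} \tn_{A} \ls{I}\bigr)$, and $\pi \tn_{A} \pi$ annihilates this image by construction. Hence the composite factors through $\pi$, yielding a well-defined $A$-bimodule map $\bar{\Delta}: \frac{H}{I} \to \frac{H}{I} \tn_{A} \frac{H}{I}$. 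Coassociativity \eqref{EqDelAss}, counitality \eqref{EqCoun} and the bimodule axiom \eqref{EqDelrs} then hold in $\frac{H}{I}$ by applying $\pi$ or $\pi \tn_{A} \pi$ to the corresponding identities in $H$.

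The remaining bialgebroid axioms \eqref{EqDelMul}--\eqref{EqCounMul} are handled identically: each is an equality of elements in $H$ or in $H \tn_{A} H$ whose image under $\pi$ or $\pi \tn_{A} \pi$ gives the corresponding identity in $\frac{H}{I}$. The projection $\pi$ is then a morphism of bialgebroids by construction, since $\bar{\eta} = \pi \eta$, $\bar{\epsilon} \pi = \epsilon$ and $(\pi \tn_{A} \pi) \Delta = \bar{\Delta} \pi$. The only conceptually subtle point in the whole argument is the well-definedness of $\bar{\Delta}$, which relies on the fact that the coideal condition is formulated as containment in the \emph{image} of $\Mto{I} \tn_{A} \ls{H} + \Mto{H} \tn_{A} \ls{I}$ inside $\Mto{H} \tn_{A} \ls{H}$; this image is precisely where $\pi \tn_{A} \pi$ vanishes, so no further hypothesis is required.
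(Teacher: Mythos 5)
Your proposal is correct and follows essentially the same route as the paper: descend the $A^{e}$-ring and $A$-coring structures along $\pi$ (the paper simply invokes the standard quotient-coring construction for a $2$-sided coideal, where you spell out the well-definedness of $\bar{\Delta}$ and $\bar{\epsilon}$ directly), and then verify the bialgebroid axioms \eqref{EqDelMul}--\eqref{EqCounMul} by pushing the corresponding identities of $H$ through $\pi$ and $\pi\tn_{A}\pi$, using that $I$ is stable under the $A^{e}$-actions. The paper merely writes out the two computations for \eqref{EqDelMul} and \eqref{EqCounMul} that you assert; no substantive difference.
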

\begin{proof} It follows directly that $\pi\eta$ is a $\K$-algebra map and $\frac{H}{I}$ is equipped with induced source and target maps, $\pi s$ and $\pi t$, respectively. By assumption, $\frac{H}{I}$ is also an $A$-coring with the obvious defining maps. What remains to be shown is whether the bialgebroid axioms hold. 

Conditions \eqref{Eq:B5} and \eqref{Eq:B6} follow trivially since $1_{\frac{H}{I}}= 1_{H} +I$ and $\varepsilon (I)=0$. Let $h+I,h'+I\in \frac{H}{I}$. Since $I$ is an $A^{e}$-subbimodule of $B$, 
$$\epsilon' \big( (h+I)(h'+I)\big)= \epsilon' ( hh'+I) = \epsilon' \big( h\epsilon (h')+I  \big)=\epsilon' \big( (h+I)\epsilon (h')  \big)=  \epsilon' \big( (h+I)\epsilon' (h'+I)  \big) $$
Noting that $\Delta' : \frac{H}{I} \rightarrow \sMto{\frac{H}{I}}\tn_{A}\sMto{\frac{H}{I}}$ is defined by $h+I\mapsto h_{(1)}+I \tn_{A} h_{(2)}+I$, it is straightforward to check that $\Delta'$ is an algebra map: 
\begin{align*}
\left( h+I \right)_{(1)}\left( h'+I \right)_{(1)}&\tn_{A} \left( h+I \right)_{(2)}\left( h'+I \right)_{(2)}= \left( h_{(1)}+I \right)\left( h'_{(1)}+I \right)\tn_{A} \left( h_{(2)}+I \right)\left( h'_{(2)}+I \right)
\\&=\left( h_{(1)}h'_{(1)}+I \right)\tn_{A} \left( h_{(2)}h'_{(2)}+I \right) =\left( (hh')_{(1)}+I \right)\tn_{A} \left( (hh')_{(2)}+I \right)
\\&=\left( (h+I)(h'+I) \right)_{(1)}\tn_{A} \left( (h+I)(h'+I) \right)_{(2)}
\qedhere\end{align*}\end{proof}
If $(A,H)$ is a Hopf algebroid, we call a 2-sided ideal and 2-sided coideal $I$ of $H$, a \emph{Hopf ideal} if for any element $h\in I$ we have that
\begin{equation}\label{eq:HpfIdeal}
h_{(+)}\tn_{A^{\op}} h_{(-)} \in \mathrm{Im} \left( \rt{I}\tn_{A^{\op}} \lt{H} + \rt{H}\tn_{A^{\op}} \lt{I} \right) \subseteq \rt{H}\tn_{A^{\op}} \lt{H} \tag{lH}
\end{equation}
holds.
\begin{proposition}\label{prop:quotHpfalg} If $(A,H)$ is a Hopf algebroid and $I$ a Hopf ideal, then the natural bialgebroid $(A,\frac{H}{I})$ is a Hopf algebroid. 
\end{proposition}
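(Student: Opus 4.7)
The plan is to build on Proposition~\ref{prop:quotbialg}, which already supplies the bialgebroid structure on $(A, H/I)$, and reduce the problem to showing that the induced canonical map
\[
\beta' : \rt{(H/I)} \tn_{A^{\op}} \lt{(H/I)} \longrightarrow \Mto{(H/I)} \tn_A \ls{(H/I)}, \qquad \overline{b} \tn \overline{b'} \longmapsto \overline{b_{(1)}} \tn \overline{b_{(2)} b'},
\]
is invertible. My strategy is to show that $\beta$ on $H$, together with its inverse $\beta^{-1}$, descends along $\pi \tn \pi$ to well-defined maps between the corresponding quotients; then the two descents will automatically be mutually inverse bijections.

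The key reduction uses right-exactness of the tensor products to identify
\[
\rt{(H/I)} \tn_{A^{\op}} \lt{(H/I)} \;\cong\; \frac{\rt{H} \tn_{A^{\op}} \lt{H}}{\rt{I} \tn_{A^{\op}} \lt{H} + \rt{H} \tn_{A^{\op}} \lt{I}},
\]
and analogously for $\Mto{(H/I)} \tn_A \ls{(H/I)}$. Hence it suffices to establish the two inclusions
\[
\beta\bigl( \rt{I} \tn \lt{H} + \rt{H} \tn \lt{I} \bigr) \subseteq \Mto{I} \tn \ls{H} + \Mto{H} \tn \ls{I},
\]
\[
\beta^{-1}\bigl( \Mto{I} \tn \ls{H} + \Mto{H} \tn \ls{I} \bigr) \subseteq \rt{I} \tn \lt{H} + \rt{H} \tn \lt{I}.
\]
For the first, on $i \tn h$ with $i \in I$ the coideal property of $I$ gives $\Delta(i) \in \Mto{I} \tn \ls{H} + \Mto{H} \tn \ls{I}$, and $\beta(i \tn h) = i_{(1)} \tn i_{(2)} h$ remains in this submodule because it is stable under right multiplication in the second factor; on $h \tn i$, the element $\beta(h \tn i) = h_{(1)} \tn h_{(2)} i$ lies in $\Mto{H} \tn \ls{I}$ since $I$ is a right ideal of $H$. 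For the second, using the identity $\beta^{-1}(h \tn h') = h_{(+)} \tn h_{(-)} h'$---which follows from compatibility of $\beta^{-1}$ with right multiplication by $1 \tn h'$ on the second factor---I would compute $\beta^{-1}(i \tn h) = i_{(+)} \tn i_{(-)} h$ and $\beta^{-1}(h \tn i) = h_{(+)} \tn h_{(-)} i$. The former lies in $\rt{I} \tn \lt{H} + \rt{H} \tn \lt{I}$ precisely by the Hopf ideal condition~\eqref{eq:HpfIdeal} applied to $i$, and the latter lies in $\rt{H} \tn \lt{I}$ because $I$ is a right ideal.

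With both inclusions in hand, $\beta$ and $\beta^{-1}$ descend to mutually inverse maps between the two quotients, and unravelling the definitions identifies the descent of $\beta$ with $\beta'$, giving the desired invertibility. The main obstacle, such as it is, is not any single computation but the careful bookkeeping of the different bimodule structures on $H$---some arising via $\source$ and others via $\target$---together with verifying that the natural right $H$-action on the second tensor factor respects them compatibly. This is precisely the place where condition~\eqref{eq:HpfIdeal} genuinely matters: since Hopf algebroids need not admit antipodes, it cannot be reduced to an antipode-stability statement, as emphasised at the start of Section~\ref{SIdeals}.
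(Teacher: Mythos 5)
Your proposal is correct and follows essentially the same route as the paper: the paper likewise defines the inverse on the quotient by $b+I\tn_{A^{\op}} b'+I\mapsto b_{(+)}+I\tn_{A^{\op}} b_{(-)}b'+I$ and observes that the Hopf ideal condition \eqref{eq:HpfIdeal} is exactly what makes this well defined, while your version merely spells out that well-definedness as the descent of $\beta^{-1}$ through the quotient identification of the tensor products. The only quibble is cosmetic: the terms $h_{(2)}i$ and $h_{(-)}i$ land in $I$ because $I$ is a \emph{left} ideal, not a right ideal as you wrote, though this is immaterial since $I$ is two-sided.
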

\begin{proof} Following from Proposition \ref{prop:quotbialg}, we must prove that $\beta':\rt{\frac{H}{I}}\tn_{A^{\op}}\lt{\frac{H}{I}} \rightarrow \Mto{\frac{H}{I}}\tn_{A} \ls{\frac{H}{I}}$ defined by $\beta' ( b+I\tn_{A^{\op}} b'+I)= b_{(1)}+I\tn_{A} b_{(2)}b'+I$ is invertible. Define $\theta: \Mto{\frac{H}{I}}\tn_{A} \ls{\frac{H}{I}}\rightarrow\rt{\frac{H}{I}}\tn_{A^{\op}}\lt{\frac{H}{I}} $ by $\theta ( b+I\tn_{A} b'+I) = b_{(+)}+I\tn_{A^{\op}} b_{(-)}b' +I$. Observe that because $I$ is a Hopf ideal, $\theta$ is well-defined. It is straightforward to check that $\theta$ is the inverse of $\beta'$. 
\end{proof}
We should note that the right-handed Hopf condition holds if the ideal satisfies 
\begin{equation}\label{eq:RightHpfIdeal}
\vartheta^{-1} ( 1\tn h)\in \mathrm{Im} \left( \rs{I}\tn_{A} \ls{H} + \rs{H}\tn_{A} \ls{I} \right) \subseteq \rs{H}\tn_{A} \ls{H} \tag{rH}
\end{equation}
and a symmetric argument to that of Proposition \ref{prop:quotHpfalg} follows.

We call a 2-sided ideal and 2-sided coideal $I$ of a Hopf algebroid $(A,H, S)$, $S$-\emph{stable} if $S( I )= I$. Observe that if $I$ is $S$-stable then by the description of $\beta^{-1}$ in terms of the antipode, it follows directly that $I$ is a Hopf ideal. 
\begin{proposition}\label{prop:antipode} If $I$ is an $S$-stable ideal of a Hopf algebroid $(A,H, S)$, the quotient Hopf algebroid $(A,\frac{H}{I})$ admits an antipode $S'$, defined by $S'( h+ I)= S( h)+ I$ for $h+I\in \frac{H}{I}$. 
\end{proposition}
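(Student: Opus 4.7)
The plan is to verify that $S'$ inherits all the required properties of $S$ directly from the $S$-stability of $I$, essentially via a diagram-chase on the quotient map $\pi: H\to \frac{H}{I}$. Since we already know from the remark preceding the statement that an $S$-stable ideal is automatically a Hopf ideal, Proposition \ref{prop:quotHpfalg} supplies the Hopf algebroid structure on $(A,\frac{H}{I})$; it remains only to construct the antipode on the quotient.

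First I would check well-definedness and basic algebraic properties of $S'$. Well-definedness is immediate: if $h\in I$, then by $S$-stability $S(h)\in S(I)=I$, so $S'(h+I)=0$. Because $S$ is an invertible anti-algebra map with $S(I)=I$, we also have $S^{-1}(I)=I$, so the formula $(S')^{-1}(h+I)=S^{-1}(h)+I$ is well-defined and provides a two-sided inverse. The anti-multiplicativity of $S'$ and the identity $S'(1_{H/I})=1_{H/I}$ follow from the corresponding properties of $S$ by applying $\pi$.

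Next I would verify the three antipode axioms \eqref{eq:antipode1}, \eqref{eq:antipode2}, \eqref{eq:antipode3} for $(A,\frac{H}{I}, S')$. For \eqref{eq:antipode1}, one computes directly
\[
S'(\source'(a))=S'(\source(a)+I)=S(\source(a))+I=\target(a)+I=\target'(a).
\]
For \eqref{eq:antipode2} and \eqref{eq:antipode3}, the key observation is that, as already used in the proof of Proposition \ref{prop:quotbialg}, the comultiplication on $\frac{H}{I}$ satisfies $(h+I)_{(1)}\tn_{A}(h+I)_{(2)}=(h_{(1)}+I)\tn_{A}(h_{(2)}+I)$. Hence applying the maps $\pi\tn_{A}\pi$ (or $\pi\tn_{A^{\op}}\pi$) to the identities \eqref{eq:antipode2} and \eqref{eq:antipode3} for $S$ in $H$ yields the corresponding identities for $S'$ in $\frac{H}{I}$; the only subtlety is confirming that $\pi\tn_{A}\pi$ is well-defined on the relevant tensor products, which is immediate because $\pi$ respects the source, target, and $A^{\op}$-module structures.

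The main thing to be careful about is not actually technical; it is just bookkeeping on which side of the tensor product the $A$-action or $A^{\op}$-action lives, to ensure the equalities make sense as stated in $\Mto{\frac{H}{I}}\tn_{A}\ls{\frac{H}{I}}$. Nothing surprising should happen: $S$-stability of $I$ is strong enough that every required identity in $\frac{H}{I}$ lifts trivially to an identity in $H$ modulo an element of $I$ that is annihilated by $\pi$. Consequently no new relations need to be imposed and the construction of $S'$ is completely determined by the formula in the statement.
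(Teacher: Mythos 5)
Your proposal is correct and follows essentially the same route as the paper's own (much terser) proof: well-definedness and invertibility of $S'$ from $S(I)=I$ and $S^{-1}(I)=I$, the identity $S'\pi\source=\pi\target$, and then deducing \eqref{eq:antipode2}--\eqref{eq:antipode3} by pushing the corresponding identities in $H$ through $\pi\tn\pi$ using the formula for $\Delta'$ from Proposition \ref{prop:quotbialg}. You have simply spelled out the details the paper leaves implicit.
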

\begin{proof} Since $S( I )= I$ and thereby $S^{-1}( I )= I$, it follows that $S'$ is an invertible anti-algebra map and $S'(\pi s )=\pi t$. Conditions \eqref{eq:antipode2} and \eqref{eq:antipode3} then follow from the definition of $S'$ and $\Delta'$ from Proposition \ref{prop:quotbialg}. 
\end{proof}
\subsection{Quotients with base algebra change}\label{ssec:basechng}
If $(A,H)$ is a bialgebroid, we call a pair of 2-sided ideals $J\subset A$ and $I\subset H$ an ideal of $(A,H)$ if
\begin{enumerate}[label=(BQ\arabic*), ref=BQ\arabic*, leftmargin=1.1cm ]
\item \label{Eq:BQ1} $\source (J)\subset I$ and $\target ({J}^{\op})\subset I$, where $J^{\op}$ is the corresponding ideal to $J$ in ${A}^{\op}$
\item \label{Eq:BQ2} $\Delta(I) \subseteq {\rm Im} \Big( \Mto{I} \tn_{A} \ls{H} + \Mto{H} \tn_{A} \ls{I}\Big)\subset \Mto{H} \tn_{A} \ls{H}$
\item \label{Eq:BQ3} $\varepsilon (I)\subseteq J$ 
\end{enumerate}
Thereby, any 2-sided ideal and coideal $I$ as in Proposition \ref{prop:quotbialg}, forms a pair in this sense as $(0,I)$. 

Observe that given an ideal $J\subseteq A$, the subset $J\tn J^{\op}$ becomes an ideal of $A^{e}$ and $\frac{A^{e}}{J\otimes {J}^{\op}}\cong\big( \frac{A}{J}\big)^{e}$. From \eqref{Eq:BQ1}, it follows that $\overline{\eta}:\frac{A^{e}}{J\otimes {J}^{\op}}\rightarrow \frac{H}{I}$ defined by $\overline{\eta} \left(a \tn_{A} a' + J\otimes {J}^{\op} \right)= \eta (a \tn_{A} a') + I $ is an algebra map and we will abuse notation and denote the respective source and target maps by $\source$ and $\target$ again.

The definition of a coideal of a coring which allows for base change first appeared in Definition 1.1.29 of \cite{el2004co}. In particular, \eqref{Eq:BQ2} and \eqref{Eq:BQ3} are equivalent to identities (1) and (2) in Definition 1.1.29 of \cite{el2004co}, while \eqref{Eq:BQ1} implies identity (3). By Proposition 1.1.30 of \cite{el2004co} we conclude that $\frac{H}{I}$ carries a $\frac{A}{J}$-coring structure, in a natural way. 
\begin{proposition}\label{prop:basechngbialg} If $(J,I)$ is an ideal of the bialgebroid $(A,H)$, then the pair $(\frac{A}{J},\frac{H}{I})$ has an induced bialgebroid structure. 
\end{proposition}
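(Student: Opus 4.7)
The plan is to combine three ingredients: the $(A/J)^{e}$-ring structure on $\frac{H}{I}$ guaranteed by \eqref{Eq:BQ1} (as noted in the paragraph preceding the statement), the $\frac{A}{J}$-coring structure on $\frac{H}{I}$ produced from \eqref{Eq:BQ2} and \eqref{Eq:BQ3} via Proposition 1.1.30 of \cite{el2004co}, and a direct verification of the remaining multiplicative compatibility axioms \eqref{EqDelMul}, \eqref{Eq:B5}, \eqref{Eq:B6}, \eqref{EqCounMul} on representatives, mimicking the argument of Proposition \ref{prop:quotbialg}. To set notation, I would let $\pi:A\rightarrow \frac{A}{J}$ and $p:H\rightarrow \frac{H}{I}$ denote the canonical projections, write $\overline{\source}=p\source$ and $\overline{\target}=p\target$ for the induced source and target maps (well-defined by \eqref{Eq:BQ1}), and denote the induced coring maps by $\Delta'(h+I)= (h_{(1)}+I)\tn_{\frac{A}{J}}(h_{(2)}+I)$ and $\epsilon'(h+I)=\epsilon(h)+J$.

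Next I would dispose of the straightforward axioms. Conditions \eqref{Eq:B5} and \eqref{Eq:B6} are immediate: $\Delta'(1_H+I)= (1_H+I)\tn_{\frac{A}{J}}(1_H+I)$ and $\epsilon'(1_H+I)=1_A+J$. For the multiplicativity of $\Delta'$, a computation on representatives yields
\begin{equation*}
\Delta'\bigl((h+I)(h'+I)\bigr)= (h_{(1)}h'_{(1)}+I)\tn_{\frac{A}{J}}(h_{(2)}h'_{(2)}+I)= \Delta'(h+I)\Delta'(h'+I),
\end{equation*}
using \eqref{EqDelMul} for $(A,H)$, exactly as in the corresponding step of Proposition \ref{prop:quotbialg}.

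The only genuinely subtle point, and what I expect to be the main obstacle, is verifying \eqref{EqCounMul}: one has $\epsilon'\bigl((h+I)(h'+I)\bigr)= \epsilon(h\,\epsilon(h'))+J$ by \eqref{EqCounMul} for $(A,H)$, and the goal is to identify this with $\epsilon'\bigl((h+I)\,\overline{\source}(\epsilon'(h'+I))\bigr)$. That identification rests on the assignment $a+J\mapsto \epsilon(h\source(a))+J$ being independent of the lift $a$, which amounts to showing $\epsilon(h\source(a))\in J$ whenever $a\in J$. This follows by chaining \eqref{Eq:BQ1} (giving $\source(a)\in I$), the ideal property of $I$ in $H$ (giving $h\source(a)\in I$), and \eqref{Eq:BQ3} (giving $\epsilon(h\source(a))\in J$). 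The broader difficulty here is not any single calculation but rather the bookkeeping of descending all structural maps through two simultaneous quotients: \eqref{Eq:BQ2} packaged by Proposition 1.1.30 of \cite{el2004co} is what forces $\Delta$ to land in the tensor product over $\frac{A}{J}$ rather than merely over $A$, while \eqref{Eq:BQ1} and \eqref{Eq:BQ3} together ensure the nested counit application in \eqref{EqCounMul} descends, completing the proof.
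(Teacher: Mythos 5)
Your proposal is correct and follows essentially the same route as the paper: both take the $\frac{A}{J}$-coring structure from Proposition 1.1.30 of \cite{el2004co}, dispose of \eqref{Eq:B5}, \eqref{Eq:B6} and \eqref{EqDelMul} by the argument of Proposition \ref{prop:quotbialg} together with the identification $\Mto{\frac{H}{I}}\tn_{A}\ls{\frac{H}{I}}\cong \Mto{\frac{H}{I}}\tn_{\frac{A}{J}}\ls{\frac{H}{I}}$, and reduce \eqref{EqCounMul} to the well-definedness of the induced source map on $\frac{A}{J}$ via \eqref{Eq:BQ1} and \eqref{Eq:BQ3}. Your explicit chain $a\in J\Rightarrow\source(a)\in I\Rightarrow h\source(a)\in I\Rightarrow\epsilon(h\source(a))\in J$ is just a slightly more spelled-out version of the computation the paper performs on representatives.
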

\begin{proof} As mentioned above $\frac{H}{I}$ becomes an $A/J$-coring, with structure maps $\varepsilon' (h+I)=\varepsilon(h)+J$ and $\Delta'( h+I)= h_{(1)}+I\tn_{\frac{A}{J}} h_{(1)}+I$, where the latter is well-defined since $\Mto{\frac{H}{I}}\tn_{A}\ls{\frac{H}{I}}\cong \Mto{\frac{H}{I}}\tn_{\frac{A}{J}}\ls{\frac{H}{I}}$. 

Axiom \eqref{EqDelMul} follows trivially as in Proposition \ref{prop:quotbialg}, with the additional observation that $\Mto{\frac{H}{I}}\tn_{A}\ls{\frac{H}{I}}\cong \Mto{\frac{H}{I}}\tn_{\frac{A}{J}}\ls{\frac{H}{I}}$. Conditions \eqref{Eq:B5} and \eqref{Eq:B6} follow trivially since $1_{\frac{H}{I}}= 1_{H} +I$ and $\varepsilon (I)=J$. For \eqref{EqCounMul}, we observe that 
\begin{align*}
\varepsilon'\big( ( h+I)\source \big(\varepsilon'(h'+I)\big) \big) =& \varepsilon'\big( ( h+I)\source\big(\varepsilon(h')+J\big) \big)=  \varepsilon' \big((h+I)\big( \source(\varepsilon(h'))+I\big)\big) = \varepsilon' \big(h \source (\varepsilon(h'))+I\big)
\\=&\varepsilon (hs(\varepsilon(h')))+J= \varepsilon ( hh')+J= \varepsilon' ( (h+I)(h'+I)) 
\end{align*}
for any $h+I,h'+I\in \frac{H}{I}$.
\end{proof}
If $(A,H)$ is a left Hopf algebroid, we call an ideal $(J,I)$ of $(A,H)$ a \emph{Hopf ideal} if for any element $h\in I$, \eqref{eq:HpfIdeal} holds. 
\begin{proposition}\label{prop:HopfBasechng} If $(A,H)$ is a left Hopf algebroid and $(J,I)$ a Hopf ideal, then the bialgebroid $(\frac{A}{J},\frac{H}{I})$ is a left Hopf algebroid. 
\end{proposition}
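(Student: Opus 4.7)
The plan is to mimic the argument of Proposition~\ref{prop:quotHpfalg} while keeping track of the base change that is already in place from Proposition~\ref{prop:basechngbialg}. That proposition produces the bialgebroid $(\frac{A}{J},\frac{H}{I})$, so the only thing left is to show that the translation map
\[
\beta':\rt{\tfrac{H}{I}}\tn_{\frac{A^{\op}}{J^{\op}}}\lt{\tfrac{H}{I}} \longrightarrow \Mto{\tfrac{H}{I}}\tn_{\frac{A}{J}}\ls{\tfrac{H}{I}}, \qquad (b+I)\tn(b'+I)\longmapsto (b_{(1)}+I)\tn(b_{(2)}b'+I),
\]
is invertible. Note that these tensor products are legitimate because \eqref{Eq:BQ1} forces the images of $\source$ and $\target$ to descend to $\frac{A}{J}$ and $\frac{A^{\op}}{J^{\op}}$ respectively, so the relevant module structures on $\frac{H}{I}$ factor through the quotient of the base. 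The candidate inverse is the obvious analogue of the one used in Proposition~\ref{prop:quotHpfalg}:
\[
\theta\bigl((b+I)\tn(b'+I)\bigr)\;=\;(b_{(+)}+I)\tn(b_{(-)}b'+I).
\]

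The main step is verifying that $\theta$ is well-defined, and this is where the Hopf ideal hypothesis is used. Replacing $b$ by $b+h$ with $h\in I$ changes the output by $h_{(+)}\tn h_{(-)}b'+I$; by \eqref{eq:HpfIdeal}, we have $h_{(+)}\tn h_{(-)}\in\mathrm{Im}(\rt{I}\tn_{A^{\op}}\lt{H}+\rt{H}\tn_{A^{\op}}\lt{I})$, and right-multiplying the second tensorand by $b'$ preserves this image (since $\lt{I}$ is stable under right multiplication by elements of $H$), so the expression vanishes in $\rt{\frac{H}{I}}\tn\lt{\frac{H}{I}}$. Replacing $b'$ by $b'+h'$ with $h'\in I$ changes the output by $b_{(+)}\tn b_{(-)}h'+I$, which is zero because $I$ is a 2-sided ideal. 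Finally, the balancing over $\frac{A^{\op}}{J^{\op}}$ is inherited from the balancing of $\theta$ over $A^{\op}$ at the level of $H$, namely the identity $(b\target(a))_{(+)}\tn (b\target(a))_{(-)}=b_{(+)}\tn b_{(-)}\target(a)$ in $\rt{H}\tn_{A^{\op}}\lt{H}$, which follows directly from \eqref{eq:s+-}--\eqref{eq:hh'+-}; this identity persists after projecting to the quotient, and the remaining $J^{\op}$-balancing is automatic once we are working over $\frac{A^{\op}}{J^{\op}}$.

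Having established well-definedness, the identities $\theta\beta'=\id$ and $\beta'\theta=\id$ are obtained by projecting the corresponding identities $\beta^{-1}\beta=\id$ and $\beta\beta^{-1}=\id$ for $H$ to the quotient, so no new computation is required. The hard part is therefore essentially bookkeeping: making sure that \eqref{eq:HpfIdeal}, originally an assertion about a vector in $\rt{H}\tn_{A^{\op}}\lt{H}$, still produces a well-defined vanishing statement after we have quotiented both the base and the total algebra, and that the subsequent right multiplication by arbitrary $b'\in H$ in the second tensorand keeps us within the relevant sum of images. Both points follow cleanly from the hypothesis that $I$ is a 2-sided ideal of $H$ together with \eqref{Eq:BQ1}, so the proof reduces to the explicit construction of $\theta$ and the verifications just outlined.
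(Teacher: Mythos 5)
Your proposal is correct and follows essentially the same route as the paper: both use the isomorphisms of tensor products over $A$ and over $\frac{A}{J}$ coming from \eqref{Eq:BQ1}, define the candidate inverse $\theta$ exactly as in Proposition \ref{prop:quotHpfalg}, invoke \eqref{eq:HpfIdeal} for well-definedness, and deduce the inverse identities by projecting those of $\beta^{-1}$ on $H$. Your write-up is merely more explicit about why right-multiplying the second tensorand by $b'$ preserves the image $\rt{I}\tn_{A^{\op}}\lt{H}+\rt{H}\tn_{A^{\op}}\lt{I}$, which the paper leaves implicit.
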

\begin{proof} It follows from $\source (J),\target (J^{\op})\subset I$ that $\rt{\frac{H}{I}}\tn_{A^{\op}}\lt{\frac{H}{I}} \cong\rt{\frac{H}{I}}\tn_{\frac{A}{J}^{\op}}\lt{\frac{H}{I}} $ and $\Mto{\frac{H}{I}}\tn_{A} \ls{\frac{H}{I}}\cong \Mto{\frac{H}{I}}\tn_{\frac{A}{J}} \ls{\frac{H}{I}}$. Therefore, we can define $\theta: \Mto{\frac{H}{I}}\tn_{\frac{A}{J}} \ls{\frac{H}{I}}\rightarrow \rt{\frac{H}{I}}\tn_{\frac{A}{J}^{\op}}\lt{\frac{H}{I}}$ by $h+I\tn_{\frac{A}{J}} h'+I\mapsto  h_{(-)}+I\tn_{\frac{A}{J}^{\op}} h_{(-)} h'+I$ as done in Proposition \ref{prop:quotHpfalg} and \eqref{Eq:BQ1} guarantees that $\theta'$ will be well-defined. The Hopf algebroid structure of $H$ then implies that $\theta'$ is an inverse of $h+I\tn_{\frac{A}{J}^{\op}} h'+I\mapsto h_{(1)}+I\tn_{\frac{A}{J}} h_{(2)}h'+I$. 
\end{proof}
If $(A,H,S)$ is a Hopf algebroid with an invertible antipode and $(J,I)$ is an ideal of $(A,H)$, where $I$ is $S$-stable, the same argument of Proposition \ref{prop:antipode} shows that the quotient Hopf algebroid $(\frac{A}{J},\frac{H}{I})$ admits an antipode. 

The following examples verify that our notion of Hopf ideal reduce as expected for the classical examples of Hopf algebroids related to groupoids.
\begin{ex}\label{ex:FiniteGrpdIdeals}\rm If $\ct{G}$ is a finite groupoid then Hopf ideals of the Hopf algebroid $\K (\ct{G})$ defined in Example \ref{ExFiniteGrpd} correspond to subgroupoids of $\ct{G}$. It is easy to see that ideals $I_{i}$ of $\K (\ct{G}_{i})$ must be of the form $I_{i}=\mathrm{Span}_{\K} \{f_{x} \}_{x\in S_{i}}$ for some subset $S_{i}\subseteq\ct{G}_{i}$, for $i=0,1$. Following this observation, we see that a pair of ideals $(I_{0},I_{1})$ in $(\K (\ct{G}_{0}),\K(\ct{G}_{1}))$ satisfy \eqref{Eq:BQ1} if $S_{1}$ contains all arrows whose source or target belongs to $ S_{0}$. Condition \eqref{Eq:BQ2} corresponds to the following statement which we denote by (A): if $ {e_{1}}. {e_{2}}\in S_{1}$, then either $  {e_{1}}\in S_{1}$ or $  {e_{2}}\in S_{1}$. Finally, condition \eqref{Eq:BQ3} is equivalent to $S_{1}$ only containing the identity arrows $ {\id_{p}}$ corresponding to points in $p\in S_{0}$. Since 
\begin{align*}
(f_{ {e}})_{(+)}\tn (f_{ {e}} )_{(-)} = \sum_{ {e_{1}}, {e_{2}}:\  {e_{2}}. {e_{1}}=  {e}}f_{ {e_{1}}}\tn f_{ {e_{2}}^{-1}}
\end{align*}
then the Hopf condition \eqref{eq:HpfIdeal} is equivalent to the following statement holding: if $ {e_{1}}. {e_{2}}\in S_{1}$, then either $  {e_{1}}\in S_{1}$ or $  {e_{2}}^{-1}\in S_{1}$. If $(I_{0},I_{1})$ forms a bialgebroid ideal, then \eqref{eq:HpfIdeal} is simply stating that $S_{1}$ is closed under inverse: given $ {e}\in S_{1}$, either $t( {e})\in S_{0}$, in which case $s( {e}^{-1})\in S_{0}$ and by \eqref{Eq:BQ1}, $ {e}^{-1}\in S_{1}$, or $t( {e})\notin S_{0}$, in which case $ {\id_{t( {e})}}\notin S_{1}$ and $\id_{t( {e})}.  {e}=  {e}\in S_{1}$ which implies that $ {e}^{-1}\in S_{1}$. It is now straightforward to check that the mentioned conditions hold if and only if $(\ct{G}_{0}\setminus S_{0}, \ct{G}_{1}\setminus S_{1})$ forms a subgroupoid of $(\ct{G}_{0},\ct{G}_{1})$. Conditions \eqref{Eq:BQ1} ensures that arrows in and $\ct{G}_{1}\setminus S_{1}$ do no interact with points in $S_{0}$, while \eqref{Eq:BQ3} implies that all identity morphisms for points in $\ct{G}_{0}\setminus S_{0}$ appear in $\ct{G}_{1}\setminus S_{1}$. Finally, \eqref{Eq:BQ2} and \eqref{eq:HpfIdeal} ensure that $\ct{G}_{1}\setminus S_{1}$ is closed under composition and inverses, respectively. 
\end{ex}
\begin{ex}\label{ExGrpdIdeals}\rm Give a groupoid $\ct{G}= (\ct{G}_{0},\ct{G}_{1})$ with finite $\ct{G}_{0}$, Hopf ideals of $\K\ct{G}$, as defined in Example \ref{ExGrpd}, correspond to \emph{isolated} subgroupoids of $\ct{G}$ i.e subgroupoids $(\ct{H}_{0},\ct{H}_{1})$, where if $e\in \ct{G}_{1}$ with $s(e)\in \ct{H}_{0}$ or $t(e)\in \ct{H}_{0}$ then $e\in \ct{H}_{1}$. As observed in Example \ref{ex:FiniteGrpdIdeals}, 2-sided ideals $J$ of $\K( \ct{G}_{0})$ correspond to subsets of $\ct{G}_{0}$. Looking at the ideal conditions, we see that \eqref{Eq:BQ1} is equivalent to $\id_{p}\in I$ for $f_{p}\in J$, while \eqref{Eq:BQ3} states that $f_t(e)\in J$ for any $e\in E$. Condition \eqref{Eq:BQ2} holds automatically for any $I$ since $\Delta$ is defined in a trivial manner and so does \ref{eq:HpfIdeal} since $(e)_{(+)}\tn (e )_{(-)} = e\tn e^{-1}\in \rt{I}\tn_{A} \lt{H}$ for $e\in I$ in this case. If $I$ is a 2-sided ideal of $\K\ct{G}_{1}$ and $\id_{p}\in I$, then for any $e\in \ct{G}_{1}$ with $s(e)=p$ or $t(e)=p$, we have $e\in I$. Conversely, if $e\in I$, then $f_{t(e)}\in J$, by \eqref{Eq:BQ3}, and \eqref{Eq:BQ1} implies that $e^{-1}\in I$ holds, since $f_s(e^{-1})\in J$. Consequently, $\epsilon ( e^{-1}) =f_{t(e^{-1})}= f_{s(e)}\in J$. Hence, ideals $(J,I)$ of $(\K (\ct{G}_{0}),\K\ct{G}_{1})$ correspond to isolated subgroupoids 
\end{ex}
\begin{ex}\label{ExPresheafIdeals}\rm Hopf ideals $(I,J)$ of a commutative Hopf algebroid $(A,H)$, as in Example \ref{ExPresheaf}, correspond to representable presheafs of subgroupoids of $\ct{H}$. Note that since in this setting $h_{(+)}\tn h_{(-)}= h_{(1)}\tn S(h_{(2)})$, then \eqref{eq:HpfIdeal} becomes equivalent to the ideal being $S$-stable. Given any ideal $(I,J)$, we denote the corresponding presheaf of $(\frac{A}{J},\frac{H}{I})$ by $\overline{\ct{H}}$. If we denote the natural projection maps $A\twoheadrightarrow \frac{A}{J}$ and $H\twoheadrightarrow \frac{H}{I}$ by $\pi_{0}$ and $\pi_{1}$, the corresponding maps $\Hom_{\K - \mathrm{alg}}(\pi_{n}, C): \overline{\ct{H}}(C)\rightarrow \ct{H}(C)$ become injective maps for any algebra $C$ and $n=0,1$, and commute with the structure maps of the groupoid. Hence, for any algebra $C$, $\overline{\ct{H}}(C)$ forms a subgroupoid of $\ct{H}(C)$. 
\end{ex}
\subsection{Isotopy quotient of a Hopf algebroid}\label{SIso}
In this section we introduce a distinguished quotient of a Hopf algebroid, called its isotropy quotient, and its connection to isotropy groups of groupoids. Recall that for an arbitrary groupoid $(\ct{G}_{0},\ct{G}_{1})$ and $x\in \ct{G}_{0}$, the group of arrows with the same source and target, $x$, is called the isotropy group of the groupoid at the point $x$. The subgroupoid obtained as the disjoint product of these groups is the \emph{isotropy groupoid} of $(\ct{G}_{0},\ct{G}_{1})$.

Given a Hopf algebroid $(A,H)$, one has a distinguished ideal $(J,I)$ of $(A,H)$, where $J= \langle ab-ba\rangle_{a ,b\in\, A}$ and $I=\langle \source (a) -\target (a)\rangle_{a \,\in\, A}$. We call this ideal the \emph{isotropy ideal} of $(A,H)$. 
\begin{lemma}\label{lemma:isoIdeal} The isotopy ideal of any Hopf algebroid is a Hopf ideal. 
\end{lemma}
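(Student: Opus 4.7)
The plan is to verify the two conditions packaged in the definition of a Hopf ideal: first that $(J,I)$ forms an ideal of the underlying bialgebroid (conditions \eqref{Eq:BQ1}--\eqref{Eq:BQ3}) and second that the Hopf condition \eqref{eq:HpfIdeal} holds. For \eqref{Eq:BQ1}, I would check generators: for $ab-ba\in J$, I would use the commutativity of source and target images in a bialgebroid to insert $\pm\source(a)\target(b)$ and write
\begin{equation*}
\source(ab)-\source(ba)=\source(a)(\source(b)-\target(b))-(\source(b)-\target(b))\source(a),
\end{equation*}
which lies in $I$ since $I$ is two-sided; the argument for $\target(J^{\op})\subset I$ is symmetric.

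For \eqref{Eq:BQ2}, I would compute on a generator $\Delta(\source(a)-\target(a))=\source(a)\tn 1-1\tn\target(a)$ and rewrite via the $A$-balancing $\target(a)\tn 1=1\tn\source(a)$ inside $\Mto{H}\tn_{A}\ls{H}$ to obtain $(\source(a)-\target(a))\tn 1+1\tn(\source(a)-\target(a))$, manifestly in $\Mto{I}\tn_{A}\ls{H}+\Mto{H}\tn_{A}\ls{I}$. The extension to arbitrary elements of $I$ then follows from $\Delta$ being multiplicative together with $I$ being two-sided.

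The delicate condition is \eqref{Eq:BQ3}. On a generator, $\epsilon(\source(a)-\target(a))=0\in J$, but the general element $h_{1}(\source(a)-\target(a))h_{2}$ requires more care. Applying \eqref{EqCounMul} to absorb $h_{2}$ into a source term, then using the commutativity of $\source$ and $\target$ together with \eqref{EqCounrs} to swap target for source, the expression reduces to $\epsilon(h_{1}\,\source([a,\epsilon(h_{2})]))$ with $[a,\epsilon(h_{2})]\in J$. My approach would then be to prove the auxiliary statement that $\pi_{A}\circ\epsilon : H\to A/J$ is an algebra map, where $\pi_{A}$ is the projection to the abelianisation; since $A/J$ is commutative, its kernel would automatically be a two-sided ideal of $H$ containing every $\source(a)-\target(a)$ and hence all of $I$. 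Using the counit identity $b=\source(\epsilon(b_{(1)}))b_{(2)}$, one derives the recursion $\epsilon(b\,\source(a))-\epsilon(b)a=\epsilon(b_{(1)})\bigl[\epsilon(b_{(2)}\,\source(a))-\epsilon(b_{(2)})a\bigr]$, which, combined with the base cases $\epsilon(\source(c)\,\source(a))-\epsilon(\source(c))a=0$ and $\epsilon(\target(c)\,\source(a))-\epsilon(\target(c))a=ac-ca\in J$, allows an induction on the algebra-generating image $\source(A)\cup\target(A)$ of $A^{e}$ in $H$.

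Finally, for \eqref{eq:HpfIdeal} on a generator, I would apply \eqref{eq:s+-} and \eqref{eq:t+-} to compute
\begin{equation*}
(\source(a)-\target(a))_{(+)}\tn_{A^{\op}}(\source(a)-\target(a))_{(-)}=\source(a)\tn 1-1\tn\source(a),
\end{equation*}
and then use the $A^{\op}$-balancing $\target(a)\tn 1=1\tn\target(a)$ in $\rt{H}\tn_{A^{\op}}\lt{H}$ to express this as $(\source(a)-\target(a))\tn 1-1\tn(\source(a)-\target(a))\in\rt{I}\tn_{A^{\op}}\lt{H}+\rt{H}\tn_{A^{\op}}\lt{I}$. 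General elements of $I$ are handled via \eqref{eq:hh'+-} together with the fact that this subspace is preserved by two-sided multiplication. The main obstacle is \eqref{Eq:BQ3}: it is precisely where the base change to the abelianisation is forced, and the nonformal input is that the bialgebroid counit, while not multiplicative on $H$ in general, becomes an algebra map once the codomain is quotiented by the commutator ideal.
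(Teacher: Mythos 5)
Your treatment of \eqref{Eq:BQ1}, \eqref{Eq:BQ2} and of the Hopf condition \eqref{eq:HpfIdeal} is correct and is essentially the paper's argument: one computes the relevant element on a generator $\source(a)-\target(a)$ using \eqref{EqDelrs}, \eqref{eq:s+-} and \eqref{eq:t+-}, rewrites it via the balancing of the tensor product so that it visibly lies in the required image, and passes to general elements of $I$ using multiplicativity of $\Delta$, respectively \eqref{eq:hh'+-}, together with the fact that $I$ is a two-sided ideal.

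The gap is in your treatment of \eqref{Eq:BQ3}. The reduction of $\epsilon\big(h(\source(a)-\target(a))h'\big)$ to $\epsilon\big(h\,\source([a,\epsilon(h')])\big)$ via \eqref{EqCounMul}, \eqref{EqCounrs} and the bimodule property of $\epsilon$ is correct, and it is in fact more than the paper records (the paper simply asserts that \eqref{Eq:BQ3} holds trivially; when $A$ is commutative the commutator vanishes and one is indeed done at this point, which covers every application in the paper). For noncommutative $A$, however, what remains is precisely the claim that $\epsilon(h\,\source(j))\in J$ for all $h\in H$ and $j\in J$, equivalently that $\pi_{A}\circ\epsilon$ kills $H\source(J)$, and your proposed completion does not establish it. The displayed ``recursion'' $\epsilon(b\source(a))-\epsilon(b)a=\epsilon(b_{(1)})\bigl[\epsilon(b_{(2)}\source(a))-\epsilon(b_{(2)})a\bigr]$ is merely the counit identity $b=\source(\epsilon(b_{(1)}))b_{(2)}$ substituted into itself: it expresses the quantity for $b$ in terms of the identical quantity for $b_{(2)}$, which is in no sense simpler, so nothing terminates. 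More seriously, the induction is to run over ``the algebra-generating image $\source(A)\cup\target(A)$ of $A^{e}$ in $H$'', but a bialgebroid is not generated as an algebra by the image of $A^{e}$ --- the paper's own $H\XA^{1}$ and $\ct{D}\XA$ carry additional generators from $\XA^{1}$ and $\fr{Y}^{1}$ --- so there is no base of algebra generators over which such an induction could proceed, and the base cases you list do not propagate to arbitrary $h\in H$. The auxiliary statement that $\pi_{A}\circ\epsilon:H\to A/J$ is an algebra map is exactly what needs proving, and it is not a formal consequence of \eqref{EqCounMul} and \eqref{EqCounrs} alone; as it stands this step is unproved both in your write-up and, implicitly, in the paper's one-word dismissal of \eqref{Eq:BQ3}.
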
 
\begin{proof} Axiom \eqref{Eq:BQ1} follows since $ \source (a)\target (b)= \target (b) \source (a)$ in $H$, while \eqref{Eq:BQ3} holds trivially. Since $\Delta(  \source (a)- \target (a))=  \source (a)\tn 1_{H} - 1_{H} \tn \target (a)= ( \source (a)-  \target (a))\tn 1_{H} - 1_{H} \tn ( \source (a) - \target (a))$, we conclude that the isotopy ideal is indeed an ideal of $(A,H)$ as defined in the last section. What remains is to show that \eqref{eq:HpfIdeal} holds. Since $\langle  \source (a) - \target (a)\rangle_{a \,\in\, A}$ is the 2-sided ideal generated by elements $ \source (a) - \target (a)$, \eqref{eq:hh'+-} implies that we only need to check \eqref{eq:HpfIdeal} for $h= \source (a) - \target (a)$ for some $a\in A$. The latter follows directly from \eqref{eq:s+-} and \eqref{eq:t+-}: 
\begin{align*}\beta^{-1}\big( \source (a)- \target (a)\tn 1_{H}\big)&=( \source (a))_{(+)}\tn_{A^{\op}} ( \source (a))_{(-)}- ( \target (a))_{(-)}\tn_{A^{\op}} ( \target (a))_{(+)}
\\&=  \source (a)\tn_{A^{\op}} 1_{H}- 1_{H}\tn_{A^{\op}}  \source (a) 
\\&=( \source (a)- \target (a))\tn_{A^{\op}} 1_{H}- 1_{H}\tn_{A^{\op}} ( \source (a)- \target (a)) \in \rt{H}\tn_{A^{\op}} \lt{H}
\end{align*}
and thereby lands in $\mathrm{Im} \left( \rt{I}\tn_{A^{\op}} \lt{H} + \rt{H}\tn_{A^{\op}} \lt{I} \right)$ as required. 
\end{proof}
We call the quotient Hopf algebroid $(\frac{A}{J},\frac{H}{I})$ the \emph{isotopy quotient} of $(A,H)$, noting that $\frac{A}{J}$ is a commutative algebra, sometimes called the \emph{abelianisation} of $A$. We will denote the isotopy quotient of $(A,H)$ by $(\ab{A}, \iso{H})$. If $A$ is commutative to begin with, then $J=0$, and the isotropy Hopf algebroid is again a Hopf algebroid over the same base algebra $A$. If in addition to $A$ being commutative, $\source = \target$ holds, which is the case for the universal enveloping Hopf algebroid of a Lie-Rinehart pair or the groupoid ring of Example \ref{ExGrpd}, then $(\ab{A}, \iso{H})=(A,H)$.

\begin{corollary}\label{corollary:IsoAnti} If $(A,H,S)$ is a Hopf algebroid admitting an invertible antipode, then its isotopy quotient $(\ab{A}, \iso{H})$ also admits an antipode.
\end{corollary}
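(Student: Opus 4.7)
The plan is to invoke the base-change antipode-descent statement given immediately after Proposition \ref{prop:HopfBasechng}: if $(J,I)$ is a Hopf ideal of $(A,H,S)$ with $I$ satisfying $S(I)=I$, then the quotient Hopf algebroid $(\frac{A}{J},\frac{H}{I})$ carries an induced antipode $S'(h+I) := S(h)+I$. By Lemma \ref{lemma:isoIdeal}, the isotopy ideal is already a Hopf ideal, so the sole remaining task is to check $S$-stability of $I = \langle \source(a)-\target(a)\rangle_{a\in A}$.

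The key observation is that, in addition to $S\circ \source = \target$ provided by \eqref{eq:antipode1}, the axioms \eqref{eq:antipode1}--\eqref{eq:antipode3} also force the symmetric identity $S\circ \target = \source$. This is a standard consequence derived as part of Proposition 4.2 of \cite{bohm2004hopf}, and I would simply cite it rather than rederive it. With both interchange formulas in hand, $S$-stability is almost immediate: since $S$ is an anti-algebra isomorphism, $S(I)$ is the two-sided ideal generated by
\begin{equation*}
S\bigl(\source(a)-\target(a)\bigr) \;=\; \target(a)-\source(a) \;=\; -\bigl(\source(a)-\target(a)\bigr),
\end{equation*}
each of which lies in $I$; hence $S(I)\subseteq I$. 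Applying $S^{-1}$ in exactly the same way (using $S^{-1}\circ\source = \target$ and $S^{-1}\circ\target = \source$, which follow by inverting \eqref{eq:antipode1} and the above interchange formula) yields $S^{-1}(I)\subseteq I$, and therefore $S(I)=I$.

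Once $S$-stability is established, I would just record that the argument of Proposition \ref{prop:antipode} transfers verbatim to the base-change setting: $S'(h+I) := S(h)+I$ is a well-defined invertible anti-algebra map on $\iso{H}$, its compatibility $S'\circ\source' = \target'$ follows by projecting \eqref{eq:antipode1} modulo $I$, and axioms \eqref{eq:antipode2}--\eqref{eq:antipode3} pass to the quotient since $\Delta'$ is the projection of $\Delta$.

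The only genuinely delicate point is the interchange $S\circ\target = \source$, which is not listed among the axioms in Section \ref{SPrelimHopf} but is a standard fact about Böhm's antipode; everything else is bookkeeping on ideals already developed in the section.
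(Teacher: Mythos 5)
Your proposal is correct and follows essentially the same route as the paper: establish $S$-stability of the isotopy ideal via $S(\source(a)-\target(a))=\target(a)-\source(a)$ and then invoke the antipode-descent statement (Proposition \ref{prop:antipode} together with the base-change remark after Proposition \ref{prop:HopfBasechng}). You are slightly more careful than the paper in flagging that $S\circ\target=\source$ is not among the listed axioms \eqref{eq:antipode1}--\eqref{eq:antipode3} but is a standard consequence of them; the paper simply asserts the identity ``by definition.''
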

\begin{proof} This statement follows directly from Proposition \ref{prop:antipode} and the fact that the isotopy ideal is $S$-stable since by definition $S( \source (a)-\target (a))= \target (a) -\source (a)$. 
\end{proof}

By definition, left modules over $\iso{H}$ are precisely left $H$-modules which contain the isotopy ideal $I$ in their annihilator. Since $I$ is generated by terms $\source (a)-\target (a)$ and the source and target define the $A$-bimodule structure of the $H$-module, we see that $\iso{H}$-modules are exactly the $H$-modules where the left and right $A$-actions agree i.e. symmetric $A$-modules. By viewing symmetric $A$-bimodules as left $\ab{A}$-modules, we have the following observation:
\begin{thm} Given a bialgebroid $(A,H)$, we have a commutative diagram of functors defined by restriction of scalars:
\begin{equation}
\xymatrix{ \lmod{H}\ar[r]^{\ct{G}}  &\bim{A} \\ \lmod{\iso{H}} \ar[u]^{\ct{F}} \ar[r]& \lmod{\ab{A}}\ar[u]_{\ct{H}}  } 
\end{equation}
where $\ct{F}\big(\lmod{\iso{H}})$ is equal to the full subcategory of modules whose image under $\ct{G}$ falls in $\ct{H}\big( \lmod{\ab{A}}\big)$. 
\end{thm}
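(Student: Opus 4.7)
The plan is to first make the four functors in the square explicit as restrictions of scalars, then deduce commutativity from commutativity of the underlying diagram of algebra maps, and finally prove the essential image claim by directly converting the annihilation of the isotopy ideal into the symmetry of the underlying bimodule. Concretely, $\ct{G}$ equips an $H$-module $M$ with the $A$-bimodule structure $a\cdot m\cdot a' := \source(a)\target(\ov{a'})m$; $\ct{F}$ is pullback along the quotient $\pi: H\twoheadrightarrow \iso{H}$; $\ct{H}$ is the standard identification of left $\ab{A}$-modules with symmetric $A$-bimodules (those satisfying $a\cdot m=m\cdot a$); and the unnamed bottom arrow is the analogue of $\ct{G}$ for the isotopy quotient $(\ab{A},\iso{H})$, which takes values in $\lmod{\ab{A}}$ because $\source=\target$ in $\iso{H}$ and $\ab{A}$ is commutative, so the source--target bimodule structure collapses to a single module structure.

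Commutativity is then essentially immediate: both composites $\ct{G}\circ \ct{F}$ and $\ct{H}\circ(\text{bottom arrow})$ equip an $\iso{H}$-module $N$ with the $A$-bimodule structure obtained by restriction along the composition $A^{e}\to H\twoheadrightarrow \iso{H}$, and this composition factors through $A^{e}\twoheadrightarrow \ab{A}\to \iso{H}$ because the generators $\source(a)-\target(a)$ of the isotopy ideal and $ab-ba$ of $J$ are killed in $\iso{H}$ and $\ab{A}$ respectively. Thus the square commutes on the nose.

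For the essential image claim I prove both inclusions. If $M=\ct{F}(N)$ for some $\iso{H}$-module $N$, then $\source(a)m=\target(a)m$ in $M$ for all $a\in A$ and $m\in M$, because these elements already coincide in $\iso{H}$; so $\ct{G}(M)$ is a symmetric $A$-bimodule and hence lies in $\ct{H}(\lmod{\ab{A}})$. Conversely, suppose $M\in \lmod{H}$ has $\ct{G}(M)\in\ct{H}(\lmod{\ab{A}})$. A general element of the isotopy ideal $I$ can be written as a finite sum $\sum_k h_k(\source(a_k)-\target(a_k))h'_k$, and it acts on $m\in M$ by $\sum_k h_k(\source(a_k)-\target(a_k))(h'_k m)=0$, since each vector $h'_k m\in M$ is again killed by $\source(a_k)-\target(a_k)$ by symmetry. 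Hence $IM=0$, the $H$-action descends through $\pi$ to an $\iso{H}$-action, and $M$ lies in $\ct{F}(\lmod{\iso{H}})$. The proof presents no genuine obstacle; the only observation required beyond unwinding definitions is that symmetry at the bimodule level suffices to annihilate the whole two-sided ideal $I$, which works because left $H$-multiplication preserves the subspace of $M$ on which $\source(a)-\target(a)$ vanishes.
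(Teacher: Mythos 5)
Your proposal is correct and follows essentially the same route as the paper, which states the theorem as an immediate observation after noting that the isotopy ideal is generated by the elements $\source(a)-\target(a)$, so that $\iso{H}$-modules are exactly the $H$-modules whose underlying $A$-bimodule is symmetric. Your writeup merely makes explicit the details the paper leaves implicit (the factorisation of $A^{e}\to \iso{H}$ through $\ab{A}$ for commutativity, and the observation that annihilation of the generators $\source(a)-\target(a)$ on all of $M$ forces annihilation of the whole two-sided ideal), and both are carried out correctly.
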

The following examples show how our construction reduces to the usual notion of isotopy sub-groupoid, when applied to Hopf algebroids related to groupoids.
\begin{ex}\label{Ex:IsoFinGrpd}\rm Let $(\ct{G}_{0},\ct{G}_{1})$ be a finite groupoid and $\K (\ct{G})$ the corresponding Hopf algebroid as in Example \ref{ExFiniteGrpd}. Since $\K (\ct{G}_{0})$ is commutative, then $\ab{\K (\ct{G}_{0})}= \K (\ct{G}_{0})$. Note that for any $p\in \ct{G}_{0}$, we have that
\begin{equation*}
  \sum_{ {e}: s( {e})=p} f_{ {e}}= \source (f_{p}) \target (f_{p}) = \sum_{ {e}: t( {e})=p} f_{ {e}}
\end{equation*} 
holds in $\iso{\K (\ct{G}_{1})}$. Multiplying the equation by $f_{e}$ for an arbitrary $e\in \ct{G}_{1}$ on both sides, we have that $\delta_{s(e),p}f_{e}= \delta_{t(e),p}f_{e}$. Consequently, in $\iso{\K (\ct{G}_{1})}$ we have $f_{e}= \delta_{s(e),t(e)}f_{e}$ and $(\K (\ct{G}_{0}), \iso{\K (\ct{G}_{1})})$ is isomorphic to the Hopf algebroid of functions on the isotropy subgroupoid $\ct{G}'$ of $\ct{G}$, $(\K (\ct{G}'_{0}), \K (\ct{G}'_{1}))$.
while the isotropy ideal 
\end{ex} 
\begin{ex}\label{Ex:IsoGrpd}\rm Let $(A,H)$ be a commutative Hopf algebroid and $\ct{H}$ its associated pre-sheaf of groupoids, as in Example \ref{ExPresheafIdeals}. Let $\ct{H}^{\mathrm{iso}}$ denote the associated pre-sheaf of groupoids of the isotopy Hopf algebroid of $(A,H)$. By definition, for an arbitrary $\K$-algebra $C$, we have an injective morphism of groupoids $\Hom_{\K - \mathrm{alg}}(\pi,C): \ct{H}^{\mathrm{iso}}(C) \hookrightarrow \ct{H}(C)$, where $\pi: H\rightarrow H/\langle s(a) -t(a)\rangle_{a \,\in\, A}$ denotes the natural projection as before. It is straightforward to see that $\ct{H}^{\mathrm{iso}}(C)$ is the subgroupoid of $\ct{H}(C)$ with the same base points $\ct{H}^{\mathrm{iso}}(C)_{0}= \ct{H}(C)_{0}=\Hom_{\K - \mathrm{alg}}(A,C)$ and arrows which have the same source and target: 
$$\ct{H}^{\mathrm{iso}}(C)_{1}\longleftrightarrow\lbrace  f\in \ct{H}(C)_{1}\mid f(\source (a))=f(\target (a)) \ \forall a\in A\rbrace = \lbrace  f\in \ct{H}(C)_{1} \mid \source^{*}(f)=\target ^{*}(f)\rbrace $$
Hence, $\ct{H}^{\mathrm{iso}}(C)$ is precisely the isotopy groupoid of $\ct{H}(C)$. The notion of isotropy groups was already extended to this setting in Section 5.1 of \cite{el2018geometrically}.
\end{ex}

\section{The case of surjective calculi}\label{SSurjective}
\subsection{Algebra structures}\label{SSurjAlg}
In this section, we develop our constructions in \cite{ghobadi2020hopf} with the further assumption that the calculus is surjective i.e. $\Omega^{1}$ is spanned by elements of the form $a.db$ for $a,b\in A$. If the calculus is surjective, then generators of the form $(x,\omega)\in R_{1}^{1}$ in $B\XA^{1}$ are spanned by generators from $A^{e}$ and $T_{\K}\XA^{1}$: if $\omega= \sum_{m} a_{m}db_{m}$, then by \eqref{EqBXA} it follows that $(x,\omega)=\sum_{m} [x,\ov{b_{m}}]\sbt\ov{a_{m}}$ holds in $B\XA^{1}$. Hence, $B\XA^{1}$ can be viewed as quotient of $A^{e}\star T_{\K}\XA^{1}$ by suitable relations. Consequently, for a surjective calculus we can describe $IB\XA^{1}$ and $H\XA^{1}$ as quotients of $T\XA^{1}_{\sbt}\star T\mathfrak{Y}^{1}_{\sbt}$, where $T\XA^{1}_{\bullet}$ and $T\mathfrak{Y}^{1}_{\sbt}$ are the algebras representing the categories of left and right connections. 

Recall that if the calculus is surjective, then $\Omega^{1}$ is a quotient of the universal calculus on $A$ [Proposition 1.5 \cite{beggs2020quantum}]. The universal calculus on an algebra $A$ is defined as $\Omega^{1}_{\mathrm{uni}}:=\ker(.)\subset A\tn_{\K}A$ with its differential defined by $d_{\mathrm{uni}}a= a\tn_{\K} 1 - 1\tn_{\K}a $. Any other surjective calculus on $A$ will be a quotient of $\Omega^{1}$ by a sub-bimodule $\ct{N}$ [Proposition 1.5 \cite{beggs2020quantum}]. The calculus $\Omega^{1}_{\mathrm{uni}}$ itself is naturally surjective since for any $\sum_{i}a_{i}\tn_{\K} b_{i}\in \Omega^{1}_{\mathrm{uni}}$, $\sum_{i}a_{i}.b_{i}=0$, so that $\sum_{i} a_{i}\tn_{\K} b_{i} =\sum_{i} (d_{\mathrm{uni}}a_{i}). b_{i}$. 

In our notation, we describe our algebras as quotients of coproducts (free products) of some generating algebras by ideals which are defined by a series of relations. For example $T\XA^{1}_{\sbt}$ is defined as the quotient of $A\star T_{\K}\XA^{1}$, by the \emph{ideal} generated by relations \eqref{EqAX} and \eqref{EqXA}. In order to describe an algebra morphism with such a domain, we must define the image of the generators under the morphism and check that the image of these relations hold in the target algebra. Meaning, for a morphism with domain $T\XA^{1}_{\sbt}$, we first define an algebra morphism from $A\star T\XA^{1}_{\sbt}$ and then check that the image of relations \eqref{EqAX} and \eqref{EqXA} hold in the target algebra. If this is the case, it follows that the algebra morphism must factorise through $T\XA^{1}_{\sbt}$. In what follows we will use this subtlety without mentioning. 

\begin{prop}\label{PSurjBXA} If $\Omega^{1}=\Omega^{1}_{\mathrm{uni}}/ \ct{N}$, then $B\XA^{1}$ is isomorphic, as an algebra, to the quotient of $A^{\op} \star T\XA^{1}_{\bullet}$ by relations $a\sbt \ov{b}= \ov{b}\sbt a$ for $a,b\in A$ and 
\begin{equation}\label{EqSurjBXA}
\sum_{m} \ov{s_{m}}\sbt x \sbt \ov{r_{m}} = 0
\end{equation}
for any $x\in \XA^{1}$ and $\sum_{m} r_{m}\tn_{\K} s_{m}\in \ct{N}$. 
\end{prop}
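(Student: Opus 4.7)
The plan is to establish the isomorphism by constructing mutually inverse algebra homomorphisms between $B\XA^{1}$ and the proposed quotient, which I denote $B'$, of $A^{\op} \star T\XA^{1}_{\sbt}$ by the relations $a\sbt\ov{b} = \ov{b}\sbt a$ and \eqref{EqSurjBXA}.

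For a map $\phi : B' \to B\XA^{1}$, the natural inclusions $A^{\op} \hookrightarrow A^{e} \hookrightarrow B\XA^{1}$ and $T\XA^{1}_{\sbt} \hookrightarrow B\XA^{1}$ induce a homomorphism from the free product. The commutation $a\sbt\ov{b} = \ov{b}\sbt a$ holds automatically in $A^{e}$. To verify \eqref{EqSurjBXA}, I would take $\sum_{m} r_{m}\tn s_{m} \in \ct{N}$ and compute $\ov{s_{m}}\sbt x\sbt\ov{r_{m}} = \ov{r_{m}s_{m}}\sbt x + \ov{s_{m}}\sbt(x, dr_{m})$ via \eqref{EqBXA}. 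Summed over $m$, the first contribution vanishes because $\sum_{m} r_{m}s_{m} = 0$ in $\Omega^{1}_{\mathrm{uni}} = \ker m$; and the second, by linearity and \eqref{EqYFrak}, collects to $(x, \sum_{m} (dr_{m})s_{m})$ whose second argument is the image in $\Omega^{1}$ of $\sum_{m} d_{\mathrm{uni}}r_{m}\cdot s_{m} = \sum_{m} r_{m}\tn s_{m} - 1\tn\sum_{m} r_{m} s_{m} = \sum_{m} r_{m}\tn s_{m} \in \ct{N}$, hence zero.

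For the inverse $\psi : B\XA^{1} \to B'$, the generators of $A^{e}$ and $T_{\K}\XA^{1}$ map to themselves, while each generator $(x, \omega)$ of the $R^{1}$-factor is sent to $\sum_{m} [x, \ov{b_{m}}]\sbt\ov{a_{m}}$ for any chosen presentation $\omega = \sum_{m} a_{m}db_{m}$ (available by surjectivity). The key step, and the main obstacle of the proof, is well-definedness on $(x, \omega)$: if $\sum_{m}a_{m}db_{m} = 0$ in $\Omega^{1}$, then $\sum_{m} (a_{m}b_{m}\tn 1 - a_{m}\tn b_{m}) \in \ct{N}$, so \eqref{EqSurjBXA} in $B'$ gives $\sum_{m} x\sbt\ov{a_{m}b_{m}} - \sum_{m}\ov{b_{m}}\sbt x\sbt\ov{a_{m}} = 0$. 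Since $\ov{a_{m}b_{m}} = \ov{b_{m}}\sbt\ov{a_{m}}$ (opposite multiplication in $A^{\op}$), this rearranges to $\sum_{m} [x, \ov{b_{m}}]\sbt\ov{a_{m}} = 0$, as required.

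Once well-definedness is secured, the remaining relations of $B\XA^{1}$ follow readily: \eqref{EqAX} and \eqref{EqXA} hold in $B'$ by construction, \eqref{EqBXA} becomes tautological since $\psi((x, da)) = [x, \ov{a}] = x\sbt\ov{a} - \ov{a}\sbt x$ by definition, and \eqref{EqYFrak} is verified by expanding the definition of $\psi$ via a presentation of $\omega$ and applying the Leibniz rule $d(ba') = (db)a' + b\cdot da'$ together with the commutation relation in $B'$. The composites $\phi\circ\psi$ and $\psi\circ\phi$ then agree with the identity on generators, so the two maps are mutually inverse. The crux is the well-definedness of $\psi$: the sufficiency of the single relation \eqref{EqSurjBXA} on $B'$ to capture exactly the redundancy introduced by eliminating the $R^{1}$-generators via surjectivity is the genuine content of the proposition.
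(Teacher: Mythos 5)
Your proposal is correct and follows essentially the same route as the paper: the map you call $\psi$ is exactly the paper's $\kappa$ (with the same well-definedness argument via \eqref{EqSurjBXA} applied to $\sum_{m}(a_{m}b_{m}\tn 1 - a_{m}\tn b_{m})\in\ct{N}$), and the same checks of \eqref{EqBXA} and \eqref{EqYFrak} via the Leibniz rule. The only difference is that you spell out the inverse direction $\phi$ and its compatibility with \eqref{EqSurjBXA} explicitly, which the paper dispatches as "completely symmetric".
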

\begin{proof} Let us denote the quotient of $A^{\op} \star T\XA^{1}_{\bullet}$ by the mentioned relations, by $B$. We can define an algebra morphism $\kappa : B\XA^{1}\rightarrow B$ by sending generators from $A^{e}$ and $\XA^{1}$ to the corresponding generators in $B$ and defining $\kappa \big((x,\sum_{i}a_{i}db_{i}) \big) = \sum_{i}[x,\ov{b_{i}}]\sbt \ov{a_{i}}$ for any $\sum_{i}a_{i}db_{i}$. By relation \eqref{EqSurjBXA}, $\kappa$ is well defined since if $\sum_{i}r_{i}ds_{i}= 0$ or equivalently $\sum_{i} r_{i}\tn_{\K}s_{i}\in \ct{N}$, then $\kappa \big((x,\sum_{i}r_{i}ds_{i}) \big)=  \sum_{i} x\sbt \ov{s_{i}}\sbt \ov{r_{i}}-\ov{s_{i}}\sbt x \sbt \ov{r_{i}}= 0$. Relation \eqref{EqBXA} is clearly respected by the morphism and we must only check that \eqref{EqYFrak} holds, for $n=1$, under the morphism: Noting that $(d_{\mathrm{uni}}b).a = d_{\mathrm{uni}}(ba)-b.d_{\mathrm{uni}}a$ in $\Omega^{1}_{\mathrm{uni}}$, we observe that 
\begin{align*} 
\kappa \left( \ov{a}\sbt \big(x,\sum_{i}a_{i}db_{i}\big)\sbt \ov{b}\right)=&\sum_{i} \ov{a}\sbt [x,\ov{b_{i}}]\sbt \ov{a_{i}}\sbt \ov{b}= \sum_{i} \ov{a}\sbt [x,\ov{b_{i}}]\sbt \ov{b.a_{i}}
\\ = & \sum_{i}  [x,\ov{b_{i}a}]\sbt \ov{b.a_{i}}- \sum_{i}  [x,\ov{a}]\ov{b_{i}}\sbt \ov{b.a_{i}}= \kappa \left(  (x,\sum_{i}b.a_{i}db_{i}.a)\right)
\end{align*}
We can define an inverse map for $\kappa$ in a completely symmetric manner, demonstrating that the two presentations are isomorphic.
\end{proof}
Recall that there is a distinction between left and right bimodule connection where a bimodule is considered to either satisfy the Leibniz rule with respect to its left or right $A$-action. We denote the algebra whose category of left modules is isomorphic to the category of right connections, $\ct{E}_{A}$, by  $T_{\K}\mathfrak{Y}^{1}_{\sbt}$. We require $\Omega^{1}$ to be left fgp with its right dual bimodule denoted by $\mathfrak{Y}^{1}$. In this case $T_{\K}\mathfrak{Y}^{1}_{\sbt}$ is define as the quotient of $A^{\op}\star T_{\K}\mathfrak{Y}^{1}$ by relations 
\begin{equation}\label{EqTXop}
\ov{a}\sbt \ov{y}= \ov{ya}, \hspace{0.6cm} \ov{y}\sbt \ov{a} = \ov{ay}+ \ov{\underline{\mathrm{ev}}(da \otimes y)} 
\end{equation} 
where we denote generators of $\fr{Y}^{1}$ by $\ov{y}$ and $\ov{a}\in A^{\op}$. It is well-known that any left bimodule connection $(M,\cnc, \sigma )$ with an invertible $\Omega^{1}$-intertwining, $\sigma$, induces a right bimodule connection $(M,\sigma^{-1}\cnc, \sigma^{-1})$ on $M$ and vice-versa. As observed in Remark 4.4 of \cite{ghobadi2020hopf}, this is because $T\mathfrak{Y}^{1}_{\sbt}$ is a subalgebra of $IB\XA^{1}$: generators from $\mathfrak{Y}^{1}$ are embedded as $\ov{y}\mapsto  (\omega_{i},y)\sbt x_{i}$ and genrators from $A^{\op}$ embed trivially. 

If the calculus is surjective, then generators of the form $(\omega,y)\in R_{2}^{1}$ can be written in terms of commutators between generators from $\mathfrak{Y}$ and $A$. Hence, we can extend the idea of Proposition \ref{PSurjBXA} and fully describe the algebra $IB\XA^{1}$ with generators from $A^{e}$, $\XA^{1}$ and $\mathfrak{Y}^{1}$. Before we state this description, note that when $\Omega^{1}$ is surjective, then $\cvl (1) = \sum_{i}\sum_{l=1}^{l_{i}}(db_{i,l})a_{i,l}\tn x_{i}= \sum_{i}\sum_{l=1}^{l_{i}}db_{i,l}\tn a_{i,l} x_{i}$. Hence, by re-labelling $x_{i}$ and renumbering the indexes, we can always assume that $\cvl (1)=\sum_{i} d b_{i}\tn x_{i}$ and by a symmetric argument that $\cvr (1)= \sum_{j} y_{j}\tn dc_{j}$.  
\begin{prop}\label{PSurjIBXA} If the calculus is surjective, $\Omega^{1}=\Omega^{1}_{\mathrm{uni}}/ \ct{N}$, and $\XA^{1}$ and $\mathfrak{Y}^{1}$ denote the left and right dual bimodules of $\Omega^{1}$ as before, then as an algebra $IB\XA^{1}$ is isomorphic to the quotient of the algebra $B\XA^{1}\star T_{\K}\mathfrak{Y}$, where $B\XA^{1}$ is described as in Proposition \ref{PSurjBXA}, by relations \eqref{EqTXop} and 
\begin{align}
\sum_{m} r_{m}\sbt \ov{y} \sbt s_{m}& = 0  \label{EqSurjInvDef}
\\  [\ov{y},b_{i}]\sbt x_{i}=&\ov{y} \label{EqSurjInv1}
\\  [x,\ov{c_{j}}]\sbt \ov{y_{j}}=&x\label{EqSurjInv2}
\end{align}
where $\sum_{m} r_{m}\tn_{\K} s_{m}\in \ct{N}$.
\end{prop}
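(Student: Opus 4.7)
The plan is to establish the isomorphism by constructing mutually inverse algebra homomorphisms between $IB\XA^{1}$ and the proposed quotient algebra $B$ on the right-hand side, following the template of Proposition \ref{PSurjBXA}. The key new ingredient is handling the $R_{2}^{1}$ generators, which enforce invertibility of the swap $\sigma$, and which on the RHS are replaced by the tensor algebra $T_{\K} \mathfrak{Y}^{1}$ on the right-dual bimodule together with relations reflecting the induced right bimodule connection structure.

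For the direction $\lambda: B \rightarrow IB\XA^{1}$, I would send generators of $B\XA^{1}$ to their images under the isomorphism of Proposition \ref{PSurjBXA}, and send each generator $\ov{y} \in \mathfrak{Y}^{1}$ to $(\omega_{i}, y) \sbt x_{i}$, using the Frobenius identification $\mathfrak{Y}^{1} \cong \XA^{1}$ and the embedding noted in Remark 4.4 of \cite{ghobadi2020hopf}. The relations \eqref{EqTXop} would then be automatic from this subalgebra embedding, verified directly using \eqref{EqBXA}, \eqref{EqYFrak} and \eqref{EqRelInv1} (as sketched in the discussion of $\ov{y} \sbt \ov{a}$ preceding this proposition). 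Relation \eqref{EqSurjInvDef} would follow from the right-handed analogue of Proposition \ref{PSurjBXA} applied to $T\mathfrak{Y}^{1}_{\sbt}$, while \eqref{EqSurjInv1} and \eqref{EqSurjInv2} follow by substituting the embedding formula into \eqref{EqRelInv1}, \eqref{EqRelInv2} together with the explicit coevaluation forms $\cvl(1) = db_{i} \tn x_{i}$ and $\cvr(1) = y_{j} \tn dc_{j}$ available in the surjective case.

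For the inverse $\kappa: IB\XA^{1} \rightarrow B$, I would send generators from $A^{e}$ and $\XA^{1}$ to themselves. On generators $(x, \omega) \in R_{2}^{1}$ with $\omega = \sum_{m} a_{m} db_{m}$, I would use the formula $\sum_{m} [x, \ov{b_{m}}] \sbt \ov{a_{m}}$ of Proposition \ref{PSurjBXA}. On generators $(\omega, y) \in R_{1}^{1}$, a symmetric formula involving the $\mathfrak{Y}^{1}$-generator $\ov{y}$ is needed; reading the embedding formula for $\lambda$ backward suggests a candidate of the form $\sum_{m} \ov{a_{m}} \sbt [\ov{y}, b_{m}]$ modulo a correction term coming from \eqref{EqTXop}. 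Well-definedness of $\kappa$ would then require \eqref{EqSurjBXA} for the $R_{2}^{1}$-formula and \eqref{EqSurjInvDef} for the $R_{1}^{1}$-formula. The defining relations \eqref{EqAX}--\eqref{EqYFrak} of $IB\XA^{1}$ are checked as in Proposition \ref{PSurjBXA}, and \eqref{EqRelInv1} and \eqref{EqRelInv2} should reduce to \eqref{EqSurjInv1} and \eqref{EqSurjInv2} respectively.

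The main technical obstacle will be pinning down the correct closed-form for $\kappa$ on $R_{1}^{1}$-generators, and then verifying that the bilinear relations \eqref{EqRelInv1}, \eqref{EqRelInv2}, which entangle $R_{1}^{1}$ and $R_{2}^{1}$ generators, do collapse to \eqref{EqSurjInv1}, \eqref{EqSurjInv2} after substitution. The surjectivity hypothesis enters essentially here through the substitutions $\omega_{i} = db_{i}$ and $\rho_{j} = dc_{j}$, which convert the abstract duality morphisms into commutators of barred and unbarred algebra elements. Once these verifications are in place, the mutual inverse property $\kappa \circ \lambda = \id_{B}$ and $\lambda \circ \kappa = \id_{IB\XA^{1}}$ is immediate on generators.
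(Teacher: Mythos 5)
Your proposal follows the paper's proof essentially verbatim: the paper constructs $\kappa: IB\XA^{1}\rightarrow B$ sending $(\sum_{m} a_{m}db_{m}, y)\mapsto \sum_{m} a_{m}\sbt[\ov{y},b_{m}]$ together with the inverse sending $\ov{y}\mapsto (db_{i},y)\sbt x_{i}$, with \eqref{EqSurjInvDef} giving well-definedness on the $(\omega,y)$-generators and \eqref{EqRelInv1}, \eqref{EqRelInv2} reducing to \eqref{EqSurjInv1}, \eqref{EqSurjInv2} exactly as you predict. The one point you left open resolves simply: the closed form on $(\omega,y)$-generators is $\sum_{m} a_{m}\sbt[\ov{y},b_{m}]$ with the coefficient acting through the \emph{source} (unbarred) map rather than your $\ov{a_{m}}$, since by \eqref{EqYFrak} the left $\Omega^{1}$-coefficient is absorbed via $a\sbt(\omega,y)=(a\omega,y)$, and then no correction term from \eqref{EqTXop} is needed.
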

\begin{proof} In an analogues manner to the proof of Proposition \ref{PSurjBXA}, we can construct an algebra morphism $\kappa$ from $IB\XA^{1}$ to the algebra described above, which we denote $IB$. We define $\kappa$ on the additional generators of $IB\XA^{1}$ by $(\sum_{i}a_{i}db_{i}, y ) \mapsto a_{i}\sbt[\ov{y},b_{i}]$. In a symmetric argument to the proof of Proposition \ref{PSurjBXA}, it follows that $\kappa$ is a well defined algebra morphism and respects relations \eqref{EqYFrak}, while relations \eqref{EqRelInv1} and \eqref{EqRelInv2} follow directly from \eqref{EqSurjInv1} and \eqref{EqSurjInv2}: 
\begin{align*} \kappa \left( (db_{i},y)\sbt (x_{i},da) \right) &=[\ov{y},b_{i}]\sbt [x_{i},\ov{a}]=[\ov{y},b_{i}]\sbt x_{i}\sbt \ov{a}-[\ov{ay},b_{i}]\sbt x_{i}- [\ov{\evr(da\tn y)},b_{i}]\sbt x_{i}= \ov{y}\sbt \ov{a} - \ov{ay}
\\ & =\ov{\underline{\mathrm{ev}}(da \tn y)}= \kappa \left( \ov{\underline{\mathrm{ev}}(da \tn y)} \right)
\\ \kappa \left( (x,dc_{j})\sbt (da,y_{j}) \right)&= [x,\ov{c_{j}}]\sbt [\ov{y_{j}},a]=[x,\ov{c_{j}}]\sbt \ov{y_{j}} \sbt a- [x\sbt a,\ov{c_{j}}]\sbt \ov{y_{j}}=  x\sbt a - xa- [ \evl ( x\tn da) , \ov{c_{j}}]\sbt\ov{y_{j}} 
\\ &=\mathrm{ev}(x\otimes da)= \kappa\left(\mathrm{ev}(x\otimes da) \right) 
\end{align*}
We can then define an inverse for $\kappa$ by sending additional generators $\ov{y}$ to $ (db_{i},y)\sbt x_{i}$ in $IB\XA^{1}$. The only non-trivial fact to check is whether $\kappa^{-1}$ respects \eqref{EqSurjInv1} and \eqref{EqSurjInv2}, where the latter follows from relation \eqref{EqRelInv2}: 
\begin{align*}
 \kappa^{-1}\left([\ov{y},b_{i}]\sbt x_{i}\right)=&\left[ (db_{l},y)\sbt x_{l},b_{i}\right]\sbt x_{i}=  (db_{l},y)\sbt x_{l} \sbt b_{i}\sbt x_{i}-(b_{i}db_{l},y)\sbt x_{l}\sbt x_{i}
\\=& (db_{l},y)\sbt x_{l} \sbt b_{i}\sbt x_{i}-(db_{l},y)\sbt (x_{l}b_{i}) \sbt x_{i}=(db_{l},y)\sbt \evl (x_{l} \tn db_{i})\sbt x_{i}= (db_{l},y)\sbt x_{l}=\kappa^{-1}(\ov{y})
\\\kappa^{-1}\left( [x,\ov{c_{j}}]\sbt \ov{y_{j}}\right) =& (x,dc_{j})\sbt(db_{i},y_{j})\sbt x_{i}=\evl ( x\tn db_{i}) \sbt x_{i}= x=\kappa^{-1}(x) 
\end{align*}
Hence, $\kappa$ and $\kappa^{-1}$ are both algebra morphisms and clearly inverses.\end{proof}
\begin{corollary}\label{CSurjHXA} If $\Omega^{1}$ is pivotal as a bimodule and $\XA^{1}=\fr{Y}^{1}$, then $H\XA^{1}$ is a quotient of $IB\XA^{1}$ by the additional relations 
\begin{align}
[y_{j},\ov{a}]\sbt [\ov{x},c_{j}]  =&\ov{\evl (x\tn da)} \label{EqSurjHpf1}
\\  [\ov{x_{i}},a]\sbt [ x, \ov{b_{i}}]=&\evr (da\tn x) \label{EqSurjHpf2}
\end{align}
where $IB\XA^{1}$ is presented as in Proposition \ref{PSurjIBXA}.
\end{corollary}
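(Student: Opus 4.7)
The proof mirrors that of Proposition \ref{PSurjIBXA}. By construction $H\XA^{1}$ is the quotient of $IB\XA^{1}$ by the two extra relations \eqref{EqRelHpf1} and \eqref{EqRelHpf2}, and Proposition \ref{PSurjIBXA} already furnishes an algebra isomorphism $\kappa$ between $IB\XA^{1}$ and its surjective presentation. Hence the task reduces to showing that $\kappa$ carries \eqref{EqRelHpf1} and \eqref{EqRelHpf2} to \eqref{EqSurjHpf1} and \eqref{EqSurjHpf2} modulo the relations already imposed in Proposition \ref{PSurjIBXA}, so that $\kappa$ descends to an isomorphism between $H\XA^{1}$ and the proposed quotient.

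The first step is to use surjectivity together with the normalisations $\cvl(1)=\sum_{i}db_{i}\tn x_{i}$ and $\cvr(1)=\sum_{j}y_{j}\tn dc_{j}$ from the paragraph preceding Proposition \ref{PSurjIBXA}; by $A^{e}$-linearity and the relations \eqref{EqYFrak} it suffices to treat the generating case $\omega=da$. For the second step, one directly expands
\[\kappa\bigl((y_{j},da)\sbt(\rho_{j},x)\bigr)=\kappa\bigl((y_{j},da)\bigr)\sbt\kappa\bigl((dc_{j},x)\bigr),\]
where the $R_{1}^{1}$ factor reduces via \eqref{EqBXA} to $[x,\ov{c_{j}}]$, and the $R_{2}^{1}$ factor reduces, by the computation internal to the proof of Proposition \ref{PSurjIBXA}, to the commutator $[\ov{y_{j}},a]$. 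Under the pivotal assumption $\XA^{1}=\fr{Y}^{1}$, the two names $y_{j}$ and $\ov{y_{j}}$ refer to the same underlying element, and reading each slot with the correct overline convention yields the LHS of \eqref{EqSurjHpf1}; the right-hand side $\ov{\evl(x\tn\omega)}$ is carried through unchanged. The symmetric computation applied to \eqref{EqRelHpf2}, with $\cvl$ and $\cvr$ exchanged, produces \eqref{EqSurjHpf2}.

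Finally one verifies that $\kappa^{-1}$ also respects the new presentation, by checking that the images of \eqref{EqSurjHpf1} and \eqref{EqSurjHpf2} under $\kappa^{-1}$ are consequences of \eqref{EqRelHpf1} and \eqref{EqRelHpf2} together with the existing relations of $IB\XA^{1}$. Since $\kappa$ and $\kappa^{-1}$ are mutually inverse algebra maps that preserve all other relations by Proposition \ref{PSurjIBXA}, they descend to mutually inverse maps on the two quotients, establishing the corollary.

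The main obstacle is administrative rather than structural: the paper's notation uses both orderings $(\omega,x)$ and $(x,\omega)$ for $R_{1}^{1}$ and $R_{2}^{1}$ generators and has separate overline conventions for $A^{\op}$-elements versus $\fr{Y}^{1}$-elements, and tracking these through the commutator identities is easy to get wrong by a sign or a transposition. Once this bookkeeping is settled---as is essentially already done in the proof of Proposition \ref{PSurjIBXA}---no new ideas are required beyond combining its surjective reductions with the pivotal identification $x_{i}=y_{i}$.
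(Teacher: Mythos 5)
Your argument coincides with the paper's: the paper's entire proof of this corollary is the observation that \eqref{EqSurjHpf1} and \eqref{EqSurjHpf2} are precisely the images of \eqref{EqRelHpf1} and \eqref{EqRelHpf2} under the isomorphism $\kappa$ of Proposition \ref{PSurjIBXA}, which is exactly your reduction, and your added detail (normalising to $\omega=da$, verifying $\kappa^{-1}$) is consistent with that. One small bookkeeping correction: the factor $(\rho_{j},x)=(dc_{j},x)$ reduces to $[\ov{x},c_{j}]$ via the $\fr{Y}^{1}$-generator assignment of Proposition \ref{PSurjIBXA}, not to $[x,\ov{c_{j}}]$ via \eqref{EqBXA}, as your final formula for the left-hand side of \eqref{EqSurjHpf1} already reflects.
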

\begin{proof} The proof follows exactly as in the proof of Proposition \ref{PSurjIBXA}, where we relations \eqref{EqSurjHpf1} and \eqref{EqSurjHpf2} are precisely the images of relations \eqref{EqRelHpf1} and \eqref{EqRelHpf2}, respectively, under $\kappa$.
\end{proof}

One might be tempted to look at the quotient of $IB\XA^{1}$ by relations $y_{j}\sbt [\ov{x},c_{j}]  =\ov{x}$ in a symmetric manner to relations \eqref{EqRelInv1} and \eqref{EqRelInv2}. However, such a relation would imply that $[ y_{j},\ov{a}]\sbt [\ov{x},c_{j}] = \ov{\evr (da\tn x)}$:
\begin{align*}
 \ov{xa}-\ov{ax}-\ov{\evr ( da\tn x)}&=\ov{a}\sbt \ov{x}-\ov{x}\sbt \ov{a}=\ov{a}\sbt y_{j}\sbt [\ov{x},c_{j}]- y_{j}\sbt [\ov{x},c_{j}] \sbt \ov{a} 
 \\&= \ov{a}\sbt y_{j}\sbt [\ov{x},c_{j}]-y_{j}\sbt \ov{a}\sbt [\ov{x},c_{j}]- y_{j}\sbt \left[[\ov{x},\ov{a}],c_{j} \right]
\\ &= [\ov{a}, y_{j}]\sbt [\ov{x},c_{j}] + y_{j}\sbt \left[\ov{xa}-\ov{ax}-\ov{\evr (da\tn x)},c_{j} \right]
\\&= [\ov{a}, y_{j}]\sbt [\ov{x},c_{j}] +\ov{xa}-\ov{ax}
\end{align*}
This is a very unnatural implication since for arbitrary $b\in A$ we have
\begin{align*}
\ov{\evr (b.da\tn x)} =  \ov{\evr (da\tn x)}\sbt \ov{b}, \quad [ y_{j},\ov{a}]\sbt [\ov{x},c_{j}]\sbt \ov{b}= [ y_{j},\ov{a}]\sbt [\ov{bx},c_{j}]
\end{align*} 
Hence, the mentioned equality would imply that $\ov{\evr (b.da\tn x)}= \ov{\evr (da\tn bx)}$. Even in the simple case of the digraph calculus, Section \ref{SGrph}, for any edge $\arr{p}{q} \in E $, this relation would imply that $0=\ov{\evr (df_{q}\tn f_{p}.e_{\arl{p}{q}})} = \ov{\evr (f_{p}df_{q}\tn e_{\arl{p}{q}})}= \ov{f_{p}}$.

\subsection{Surjective $\wedge: \Omega^{1}\tn \Omega^{1}\rightarrow \Omega^{2}$}
In order to describe the algebra representing extendible (invertible) bimodule connections, we are forced to add generators from $R_{1}^{2}$ (resp. $R_{2}^{2}$). In this section, we assume that $\wedge : \Omega^{1}\tn \Omega^{1}\rightarrow \Omega^{2}$ is surjective. This assumption makes these additional generators redundant and we show how relations \eqref{EqExt1} and \eqref{EqExt2} can be replaced. Note that any \emph{exterior algebra} i.e. a DGA it is generated by $A$ and $dA$, satisfies the condition that both $\wedge : \Omega^{1}\tn \Omega^{1}\rightarrow \Omega^{2}$ and the calculus are surjective. And since we do not require the additional data of $\Omega^{n}$ for $n\geq 3$, we can assume that the DGA under consideration is an exterior algebra without loss of generality.
\begin{prop}\label{PExt} If $\wedge: \Omega^{1}\tn \Omega^{1}\rightarrow \Omega^{2}$ is surjective, then the algebra representing the categories of extendible left/invertible/adaptable bimodule connections, will be isomorphic to the quotient of $B\XA^{1}$/$IB\XA^{1}$/$H\XA^{1}$ by relations
\begin{align}
\evl^{2}(x^{2}\otimes \omega_{i}\wedge \omega_{j}) \sbt (x_{j},\rho)\sbt(x_{i},\omega)=0\label{EqExtExt1}
\end{align}
and in the case of $IB\XA^{1}$/$H\XA^{1}$, relations
\begin{align}
 \ov{\evl^{2}(\rho_{i}\wedge \rho_{j}\otimes x^{2})}\sbt(\omega ,y_{i})\sbt(\rho, y_{j} )=0\label{EqExtExt2}
\end{align}
where $\omega\wedge \rho\in \mathrm{ker} (\wedge)$.
\end{prop}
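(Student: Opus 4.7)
The strategy mirrors that of Propositions \ref{PSurjBXA} and \ref{PSurjIBXA}: since $\wedge$ is surjective, any element of $\Omega^{2}$ is of the form $\sum_{\alpha}\omega_{\alpha}\wedge\rho_{\alpha}$, so relations \eqref{EqExt1} and \eqref{EqExt2} allow us to \emph{express} each generator $(x^{2},\omega^{2})\in R_{1}^{2}$ and $(\omega^{2},x^{2})\in R_{2}^{2}$ as a product of generators already present in $B\XA^{1}$, $IB\XA^{1}$ or $H\XA^{1}$. The plan is therefore to treat \eqref{EqExt1} and \eqref{EqExt2} as definitions of the images of these generators and build an isomorphism $\kappa$ between the full presentation of the algebra representing extendible connections and the proposed quotient.

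I will write the argument for $H\XA^{1}$ (the other two cases are obtained by discarding relations involving $R_{2}^{\bullet}$). Let $EH\XA^{1}$ denote the full presentation with generators from $R_{1}^{2}$ and $R_{2}^{2}$ and all relations \eqref{EqAX}--\eqref{EqExt2}, and let $Q$ denote the quotient of $H\XA^{1}$ by \eqref{EqExtExt1} and \eqref{EqExtExt2}. First I construct $\kappa: EH\XA^{1}\rightarrow Q$ sending generators from $A^{e}\star T_{\K}\XA^{1}\star R_{1}^{1}\star R_{2}^{1}$ to their namesakes and defining
\begin{equation*}
\kappa\!\left((x^{2},\textstyle\sum_{\alpha}\omega_{\alpha}\wedge\rho_{\alpha})\right) = \sum_{\alpha}\evl^{2}(x^{2}\tn \omega_{i}\wedge\omega_{j})\sbt (x_{j},\rho_{\alpha})\sbt (x_{i},\omega_{\alpha}),
\end{equation*}
and symmetrically for $(\omega^{2},x^{2})\in R_{2}^{2}$. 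Well-definedness of these two assignments is the crucial point: if $\sum_{\alpha}\omega_{\alpha}\wedge\rho_{\alpha}=0$ in $\Omega^{2}$, then $\sum_{\alpha}\omega_{\alpha}\tn\rho_{\alpha}\in \ker\wedge$, and \eqref{EqExtExt1} (respectively \eqref{EqExtExt2}) imposes precisely that the right-hand side vanishes in $Q$. Hence $\kappa$ descends from the free product to a well-defined algebra map on the tensor algebras $R_{1}^{2}$ and $R_{2}^{2}$.

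Next I verify that $\kappa$ respects the remaining defining relations \eqref{EqYFrak} for $n=2$ and \eqref{EqExt1}, \eqref{EqExt2}. The latter two hold by construction, and \eqref{EqYFrak} follows from the computation already inherent in \eqref{EqExt1}--\eqref{EqExt2}: bracketing on each side with $a\ov{a'}$ and $b\ov{b'}$ yields the correct bimodule twist on the right-hand side by the pivotal identity \eqref{EqPivWedge} together with relations \eqref{EqYFrak} for $n=1$ already present in $H\XA^{1}$. The flatness relations \eqref{EqFlat} and \eqref{EqFlExtraa} do not involve the new generators and pass through trivially.

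In the opposite direction, the obvious inclusion $H\XA^{1}\hookrightarrow EH\XA^{1}$ sends the left-hand side of \eqref{EqExtExt1} to $\evl^{2}(x^{2}\tn \omega_{i}\wedge\omega_{j})\sbt (x_{j},\rho)\sbt (x_{i},\omega)$, which by \eqref{EqExt1} equals $(x^{2},\omega\wedge \rho)=(x^{2},0)=0$ in $EH\XA^{1}$ whenever $\omega\wedge\rho=0$; similarly for \eqref{EqExtExt2}. Thus the inclusion factors through $Q$, giving a morphism $\kappa': Q\rightarrow EH\XA^{1}$. Verifying that $\kappa\circ\kappa'=\id_{Q}$ is immediate on generators. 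For $\kappa'\circ\kappa=\id_{EH\XA^{1}}$, the only non-trivial check is on the generators of $R_{1}^{2}$ and $R_{2}^{2}$, where it reduces back to relations \eqref{EqExt1} and \eqref{EqExt2} inside $EH\XA^{1}$. The main obstacle throughout is bookkeeping rather than conceptual: confirming that the bimodule relations \eqref{EqYFrak} for $n=2$ are correctly induced by \eqref{EqYFrak} for $n=1$ via the pivotal compatibility \eqref{EqPivWedge}, so that no additional relation need be imposed.
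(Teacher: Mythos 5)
Your proposal is correct and follows essentially the same route as the paper's own (much terser) proof: define $\kappa$ on the extra generators of $R_{1}^{2}$ and $R_{2}^{2}$ via the surjectivity of $\wedge$, use \eqref{EqExtExt1} and \eqref{EqExtExt2} to guarantee well-definedness, and use \eqref{EqExt1} and \eqref{EqExt2} to produce the inverse. Your additional care with the bimodule relations \eqref{EqYFrak} for $n=2$ is a detail the paper delegates to the analogy with Proposition \ref{PSurjIBXA}, but it is the same argument.
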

\begin{proof} The proof of this statement is analogues to that of Proposition \ref{PSurjIBXA} and boils down to the fact that we can define a $\kappa$ which sends the additional generators $(x^{2}, \omega\wedge \rho)$ from $R_{1}^{2}$ to $\evl^{2}( x^{2}\otimes \omega_{i}\wedge \omega_{j}) \sbt (x_{j},\rho)\sbt(x_{i},\omega)$. Relation \eqref{EqExtExt1} then insures that $\kappa$ is well defined. Similarly relations \eqref{EqExt1} and \eqref{EqExt2}, allow for $\kappa^{-1}$ to be defined. 
\end{proof}
If we use the description of $B\XA^{1}$/$IB\XA^{1}$/$H\XA^{1}$ as in Proposition \ref{PSurjIBXA}, then relations \eqref{EqExtExt1} and \eqref{EqExtExt2} can be rewritten as 
\begin{align}
\mathfrak{ev}(x^{2}\otimes \omega_{i}\wedge \omega_{j}) \sbt [x_{j},\ov{b_{m}}] \sbt [x_{i},\ov{a_{m}}]\sbt\ov{r_{m}}=0 \label{EqSujExt1}
\\\ov{\underline{\mathfrak{ev}}(\rho_{i}\wedge \rho_{j}\otimes x^{2})}r_{m} \sbt [\ov{y_{i}},a_{m}]\sbt [\ov{ y_{j}}, b_{m}]=0\label{EqSujExt2}
\end{align}
respectively, where $\sum_{m}r_{m}da_{m}\wedge db_{m}\in \mathrm{ker}(\wedge)$. 
\begin{corollary}\label{CDXA} If $(A,\oplus_{0 \leq i}\Omega^{n},d,\wedge)$ is a DGA over a surjective calculus with a pivotal structure and $\wedge: \Omega^{1}\tn \Omega^{1}\rightarrow \Omega^{2}$ is surjective, then the algebra representing flat extendible ABCs, $\ct{D} \XA$, is the quotient of $H\XA^{1} $ by relations \eqref{EqSujExt1}, \eqref{EqSujExt2} and \eqref{EqFlat}. 
\end{corollary}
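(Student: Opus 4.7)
The plan is to compose the presentations established in the preceding propositions of this section, starting from the original definition of $\ct{D}\XA$ in \cite{ghobadi2020hopf} recalled in the table. By definition, $\ct{D}\XA$ is the quotient of the free product $A^{e}\star T_{\K}\XA^{1}\star R_{1}^{1}\star R_{2}^{1}\star R_{1}^{2}\star R_{2}^{2}$ by the relations \eqref{EqAX}--\eqref{EqRelHpf2}, \eqref{EqFlat}, \eqref{EqFlExtraa}, \eqref{EqExt1}, \eqref{EqExt2}. Under the hypotheses of the corollary, the redundancy of \eqref{EqFlExtraa} for surjective calculi has already been noted in the paragraph after the summary table, so I would first discard this relation.

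Next, I would isolate the subalgebra generated by relations \eqref{EqAX}--\eqref{EqRelHpf2}: this is by definition $H\XA^{1}$, and by Corollary \ref{CSurjHXA} (which itself builds on Propositions \ref{PSurjBXA} and \ref{PSurjIBXA}) it is isomorphic, via the map $\kappa$, to the quotient of $A^{e}\star T_{\K}\XA^{1}\star T_{\K}\mathfrak{Y}^{1}$ by relations \eqref{EqAX}, \eqref{EqXA}, \eqref{EqBXA}, \eqref{EqTXop}, \eqref{EqSurjBXA}, \eqref{EqSurjInvDef}, \eqref{EqSurjInv1}, \eqref{EqSurjInv2}, \eqref{EqSurjHpf1} and \eqref{EqSurjHpf2}. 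Since $\kappa$ is an algebra isomorphism, quotienting $\ct{D}\XA$ further by any set of relations is equivalent to quotienting the $\kappa$-image by the $\kappa$-image of those relations.

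I would then apply Proposition \ref{PExt} to eliminate the remaining generators from $R_{1}^{2}$ and $R_{2}^{2}$, which is exactly what the surjectivity of $\wedge$ buys us. This step replaces \eqref{EqExt1} and \eqref{EqExt2} by the reduced relations \eqref{EqExtExt1} and \eqref{EqExtExt2}, which, once $\kappa$ is applied to rewrite the generators of $R_{1}^{1}$ and $R_{2}^{1}$ in terms of commutators of elements of $A^{e}$, $\XA^{1}$ and $\mathfrak{Y}^{1}$, become precisely the relations \eqref{EqSujExt1} and \eqref{EqSujExt2} as displayed after Proposition \ref{PExt}. Finally, the flatness relation \eqref{EqFlat} involves only generators from $A$ and $\XA^{1}$, so it is preserved verbatim under $\kappa$ and carries over directly.

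The only substantive point to verify is that combining these three reductions is consistent: that is, that $\kappa$ extends to an isomorphism between the quotient $\ct{D}\XA$ of $H\XA^{1}$ by \eqref{EqFlat}, \eqref{EqExt1}, \eqref{EqExt2} (with \eqref{EqFlExtraa} redundant) and the quotient of the simplified presentation of $H\XA^{1}$ by \eqref{EqFlat}, \eqref{EqSujExt1}, \eqref{EqSujExt2}. The expected obstacle, namely that $\kappa$ might fail to respect the flatness or extendibility relations, is immediately resolved: the images of \eqref{EqExt1}--\eqref{EqExt2} under $\kappa$ coincide with \eqref{EqSujExt1}--\eqref{EqSujExt2} by construction in Proposition \ref{PExt}, and $\kappa$ is the identity on the $A$-$\XA^{1}$ sector where \eqref{EqFlat} lives. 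Hence the proof reduces to a short citation chain rather than any new computation.
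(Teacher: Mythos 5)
Your proposal is correct and follows essentially the same route as the paper, whose proof is simply the observation that the statement "follows trivially from previous results and the redundancy of relation \eqref{EqFlExt}" — i.e.\ the same citation chain through Corollary \ref{CSurjHXA} and Proposition \ref{PExt} that you spell out. Your more detailed account of why $\kappa$ carries \eqref{EqExt1}--\eqref{EqExt2} to \eqref{EqSujExt1}--\eqref{EqSujExt2} and acts as the identity on the sector where \eqref{EqFlat} lives is accurate and merely makes explicit what the paper leaves implicit.
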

\begin{proof} The proof follows trivially from previous results and the redundancy of relation \eqref{EqFlExt}, which was observed before Remark 5.1 in \cite{ghobadi2020hopf}.
\end{proof}
If the DGA is the maximal prolongation of a first order differential calculus, then in Lemma 4.14 of \cite{beggs2020quantum} it is stated that any flat left (right) bimodule connection is automatically extendible. Hence, in this setting the relations \eqref{EqSujExt1} and \eqref{EqSujExt2} of Proposition \ref{PExt}, would become redundant in $\ct{D}\XA$. 

In Lemma 5.3 of \cite{ghobadi2020hopf}, we have described several additional relations which follow immediately from our description of $\ct{D}\XA$. In particular, by describing $H\XA^{1}$ as a quotient of $T\XA_{\sbt}^{1}\star T\mathfrak{Y}_{\sbt}^{1}$, relation (58) in \cite{ghobadi2020hopf} translates to 
\begin{equation}
\ov{\underline{\mathfrak{ev}}(d\rho_{j}\otimes x^{2}) }\sbt\ov{y_{j}} + \ov{\underline{\mathfrak{ev}}(\rho_{m}\wedge \rho_{n}\otimes x^{2})}\sbt \ov{y_{m}}\sbt \ov{y_{n}}=0\label{EqFlatRght}
\end{equation}
The quotient of $T\mathfrak{Y}^{1}_{\sbt}$ by this exact relation would be the algebra representing the category of flat right connections, denoted by $\ct{D}_{A}$. Hence, $\ct{D}\XA$ can be viewed as a quotient of $ \prescript{}{A}{\ct{D}}\star \ct{D}_{A}$ rather than $T\XA_{\sbt}^{1}\star T\mathfrak{Y}_{\sbt}^{1}$.

\begin{rmk}\label{RRightFlat}\rm Right connections $(M,\cnc^{r})$ can be viewed as left $T\mathfrak{Y}^{1}_{\sbt}$-modules by defining $\cnc^{r}(m) = y_{j}\triangleright m \tn \rho_{j}$. From this point of view the relation \eqref{EqFlatRght} translates to $(\cnc^{r}\wedge \id_{\Omega^{1}}+ \id_{M}\tn d)\cnc^{r} =0$. We must mention that this is the correct notion of flatness for a right connection. Although a left connection $(N, \cnc^{l})$ is flat if $(\id_{\Omega^{1}}\wedge \cnc^{l} - d\tn \id_{M})\cnc^{l} =0$, even for a classical flat connection on a smooth manifold, where $\sigma =\flip$, the induced right connection $\flip\cnc^{l}$ would satisfy the mentioned right flatness condition. In the classical case this change of sign, $\pm$, when changing from left to right is due to $\Omega^{2}$ being the space of antisymmetric forms, $\bigwedge^{2}(\Omega^{1})$, while our work above shows that this change of sign from left to right is independent of $\Omega^{2}$.
\end{rmk}

\subsection{Bialgebroid and Hopf algebroid structures}\label{SBialSurj}
Finally, we will recall the bialgebroid and Hopf algebroids structure of these algebras, with regards to their new generators when the calculus is surjective. We will use the notation $\equiv$ to move between the presentations of these algebras in \cite{ghobadi2020hopf} and the new set of generators described in the previous two sections.

In \cite{ghobadi2020hopf}, we defined the bialgebroid structures of $B\XA^{1}$/$IB\XA^{1}$/$H\XA^{1}$ on the generators of the algebras and showed that we can extend them to the rest of the algebras, multiplicatively. Hence, let us first recall the action of comultiplication and counit on the generators of $B\XA^{1}$ i.e. $ A^{e}$ and $\mathfrak{X}^{1}$. Recall from Proposition \ref{PSurjBXA} that generators $(x,db)$ from $R_{1}^{1}$ are realised as $[x,\ov{b}]$. Hence by Lemma 3.4 of \cite{ghobadi2020hopf}, we define the comultiplication $\Delta :\sMt{B\XA^{1}} \rightarrow \sMt{B\XA^{1}} \otimes \sMt{B\XA^{1}} $ and $\epsilon :\sMt{B\XA^{1}} \rightarrow A$ by 
\begin{align}
&\Delta (a\ov{b})= a \tn \ov{b}, \hspace{2cm} \epsilon ( a\ov{b}) = ab
\\ \Delta (x) = & x\otimes 1 +  [x,\ov{b_{i}}]\otimes x_{i}, \hspace{1.35cm} \epsilon (x)=0
\end{align}
Here when we say the bialgebroid structure is extended multiplicatively, we mean that that $\Delta (m\sbt n)= m_{(1)}\sbt n_{(1)}\otimes m_{(2)}\sbt n_{(2)}$ and $\epsilon (m\sbt n) = \epsilon (m\sbt  \epsilon (n))$ for arbitrary elements $m,n$ in $B\XA^{1}$. 

As described above, for a surjective calculus we can describe $IB\XA^{1}$ and $H\XA^{1}$ as quotients of $T\XA^{1}_{\sbt}\star T\fr{Y}^{1}_{\sbt}$. To describe the action of the comultiplication and counit on generators $\ov{y}\in \fr{Y}^{1}$, we use the fact that in our description $\ov{y}\equiv (\omega_{i},y_{j})\sbt x_{i}$ and $\Delta ((\omega, y ))= ( \omega ,y_{j})\tn ( \rho_{j},y)$ and $\epsilon ((\omega, y ))=\evr (\omega\tn y ) $ for $(\omega, y )\in R^{1}_{2}$, by Lemma 4.2 of \cite{ghobadi2020hopf}. By relation \eqref{EqRelInv1}, we observe that
\begin{align*}
\Delta (\ov{y}) &\equiv\Delta\big((\omega_{i},y)\sbt x_{i}\big) = (\omega_{i},y_{j}) \sbt x_{i}\tn ( \rho_{j},y)+ (\omega_{i},y_{j}) \sbt (x_{i}, \omega_{l}) \tn ( \rho_{j},y) \sbt x_{l}
\\ &= (\omega_{i},y_{j}) \sbt x_{i}\tn ( \rho_{j},y)+ \ov{\evr (\omega_{l}\tn y_{j})} \tn ( \rho_{j},y) \sbt x_{l} \equiv  \ov{y_{j}}\tn [\ov{y},c_{j} ] +1\tn \ov{y}
\end{align*}
Similarly, we note that 
$$\epsilon (\ov{y}) \equiv\epsilon\big((\omega_{i},y_{j})\sbt x_{i}\big)=\epsilon\big((\omega_{i},y_{j})\sbt \epsilon (x_{i})\big)=0 $$
Hence, we have fully described the bialgebroid structures on $IB\XA^{1}$ and $H\XA^{1}$. For a pivotal calculus $\Omega^{1}$, $H\XA^{1}$ is a Hopf algebroid and we can recall its Hopf algebroid structure from Theorem 4.12 of \cite{ghobadi2020hopf}: 
\begin{align*}
x_{(+)} \tn_{A^{\op}} x_{(-)} =& x \tn_{A^{\op}} 1 -(x,\rho_{j}) \tn_{A^{\op}} (\omega_{i},y_{j})\sbt x_{i} \equiv x \tn_{A^{\op}} 1 - \ov{ }[x,\ov{c_{j}}] \tn_{A^{\op}} \ov{y_{j}}
\end{align*}
We can write out $\ov{y}_{(+)}\tn \ov{y}_{(-)}$ for generators $\ov{y}\in \fr{Y}^{1}$ through the identity $\ov{y}\equiv (\omega_{i},y)\sbt x_{i}$ and equations (42) and (43) of \cite{ghobadi2020hopf}:
\begin{align*}
\ov{y}_{(+)} \tn_{A^{\op}} \ov{y}_{(-)} \equiv&   (\omega_{i},y)_{(+)}\sbt(x_{i})_{(+)} \tn_{A^{\op}} (x_{i})_{(-)}\sbt (\omega_{i},y)_{(-)} 
\\= & (\omega_{i},x_{k}) \sbt  x_{i} \tn_{A^{\op}} (y, \omega_{k}) - (\omega_{i},x_{k}) \sbt (x_{i},\rho_{j}) \tn_{A^{\op}}  (\omega_{l},y_{j})\sbt x_{l}\sbt (y, \omega_{k})
\\= & (\omega_{i},x_{k}) \sbt  x_{i} \tn_{A^{\op}} (y, \omega_{k}) - \ov{\evr ( \rho_{j}\tn x_{k}}) \tn_{A^{\op}} (\omega_{l},y_{j})\sbt x_{l}\sbt (y, \omega_{k})
\\ \equiv& \ov{x_{k}} \tn_{A^{\op}} [y,\ov{b_{k}} ] - 1 \tn_{A^{\op}} \ov{x_{k}}\sbt [ y,\ov{b_{k}} ]
\end{align*}
As mentioned earlier $H\XA^{1}$ is in fact also a right Hopf algebroid, as described in Section \ref{SPrelimHopf}. We can describe its right Hopf algebroid in a similar manner: 
\begin{align*}
 x_{[+]}  \tn  x_{[-]} =&   y_{j}  \tn   (\rho_{j},x)- 1  \tn  y_{j}\sbt(\rho_{j},x)\equiv  y_{j}  \tn   [\ov{x},c_{j}]   - 1  \tn  y_{j}\sbt [\ov{x},c_{j}]
 \\ \ov{y}_{[+]}  \tn  \ov{y}_{[-]} \equiv&   (\omega_{i},y)_{[+]}\sbt(x_{i})_{[+]}  \tn  (x_{i})_{[-]}\sbt (\omega_{i},y)_{[-]} 
\\  =& (\omega_{k},y)\sbt y_{j}  \tn   (\rho_{j},x_{i})\sbt (x_{k},\omega_{i})- (\omega_{k},y)  \tn  y_{j}\sbt(\rho_{j},x_{i})\sbt (x_{k},\omega_{i})
\\  =& (\omega_{k},y)\sbt y_{j}  \tn   \evr(\rho_{j}\tn x_{k})- (\omega_{k},y)  \tn  y_{j}\sbt \evr(\rho_{j}\tn x_{k})
\\  =& (\omega_{k},y)\sbt x_{k}  \tn  1- (\omega_{k},y)  \tn  x_{k} \equiv  \ov{y}  \tn   1 - [\ov{y},b_{i}] \tn  x_{i}
\end{align*}
Note that for a DGA where $\wedge: \Omega^{1}\tn \Omega^{1}\rightarrow \Omega^{2}$ is surjective, $\ct{D}\XA$ is a quotient of $H\XA^{1}$ as an algebra and its Hopf algebroid structure, as described in \cite{ghobadi2020hopf}, is defined as the image of the above maps.

Lastly, we recall from Theorem 4.13 of \cite{ghobadi2020hopf}, that $H\XA^{1}$ admits an invertible antipode if there exists a linear map $\Upsilon :\XA^{1}\rightarrow A$ satisfying $\Upsilon (xa)=\Upsilon (x)a+\evl (x\otimes da)$ and $\Upsilon (ax)=a\Upsilon (x)+\evr (da \otimes x) $ for arbitrary $a\in A$ and $x\in \XA^{1}$. If such a morphism exists, we can simplify the action of the antipode by re-writing it in the case of a surjective calculus: 
\begin{align*}
S(x)&=-(\omega_{i},x)\sbt x_{i}-\ov{\Upsilon (x) }\equiv -\ov{x} -\ov{\Upsilon (x) }
\\S(\ov{y})&\equiv S ((db_{i},y)\sbt x_{i}) = S(x_{i}) \sbt  S ((db_{i},y)) = \left( -(\omega_{j},x_{i})\sbt x_{j}-\ov{\Upsilon (x_{i}) }\right)\sbt (y,db_{i})=  
\\&\equiv -\ov{x_{i}} \sbt [ y,\ov{b_{i}}] - \ov{\Upsilon (x_{i}) }\sbt [ y,\ov{b_{i}}]= \ov{b_{i}x_{i}} \sbt y -\ov{x_{i}} \sbt  y\sbt \ov{b_{i}} +\ov{\Upsilon (b_{i}x_{i}) }\sbt  y - \ov{\Upsilon (x_{i}) }\sbt  y\sbt \ov{b_{i}} 
\end{align*}
Similarly, the inverse of the anitpode is defined by $S^{-1} ( \ov{x}) = -x - \Upsilon (x) $ and  
\begin{align*}
S^{-1}(x)&=-\left( y_{j}+\Upsilon (y_{j})\right)\sbt(\rho_{j},x)\equiv -\left( y_{j}+\Upsilon (y_{j})\right)\sbt [\ov{x}, c_{j}]
\\&= (y_{j}c_{j})\sbt \ov{x} +\Upsilon (y_{j}c_{j})\sbt \ov{x} - y_{j} \sbt \ov{x} \sbt c_{j} - \Upsilon (y_{j}) \sbt \ov{x} \sbt c_{j}
\end{align*}
Finally, we conclude this section by recalling that the antipode of $H\XA^{1}$ extends to $\ct{D}\XA$ if $\Upsilon$ satisfies 
\begin{equation}\label{EqUpFla}
\Upsilon (\evl^{2}(x^{2}\otimes d\omega_{i})x_{i} ) +\Upsilon \big(\Upsilon (\evl^{2}(x^{2}\otimes \omega_{j}\wedge \omega_{k})x_{k} )x_{j}\big)=0
\end{equation}
for all $x^{2}\in \XA^{2} $. This result appears in Theorem 5.5 of \cite{ghobadi2020hopf} which has two components, equation \ref{EqUpFla} and equation (63) in \cite{ghobadi2020hopf}, the latter of which automatically holds when the calculus is surjective and \ref{EqUpFla} holds.  
\subsection{Isotopy Quotients of $H\XA^{1}$ and $\ct{D}\XA$}\label{SBialgIso}
In this section we study the isotopy quotients of $H\XA^{1}$ and $\ct{D}\XA$ and provide general formulae for the relations of these algebras. Before we look at general calculi, consider any \emph{classical} DGA, where $A$ is a commutative algebra and $\Omega^{n}$ are symmetric bimodules with their canonical pivotal structure $\evr ^{n}= \evl^{n}(\flip)$ and $\cvr^{n}=(\flip)\cvl^{n}$, for $n=1,2$. In the isotopy quotient of $H\XA^{1}$, we have that $\evl (x\tn da)=[x,a]= [x,\ov{a}]= (x,da)$ and by relation \eqref{EqRelInv1} that $(da,y)=\evl (y\tn da)$. Relations \eqref{EqRelInv2}, \eqref{EqRelHpf1} and \eqref{EqRelHpf2} hold automatically and we can conclude that $\iso{H\XA^{1}}\cong T\XA^{1}_{\sbt}$ as an algebra and $T\XA^{1}_{\sbt}$ has a natural Hopf algebroid structure in this case. More specifically, in the final section of \cite{ghobadi2020hopf} we considered the case of a Lie-Rinehart algebra $(A,\XA^{1}, \tau, [,])$, where $\XA^{1}$ is fgp as an $A$-module. From this data we obtain a DGA with a cannonical pivotal structure such that the quotient of $\ct{D}\XA$ by the ideal $\langle s(a) -t(a)\rangle_{a \,\in\, A}$ recovers the \emph{universal enveloping Hopf algebroid} of $(A,\XA^{1}, \tau, [,])$, denoted by $V(A,\XA^{1})$, as an algebra. We remarked that the Hopf algebroid structure of $V(A,\XA^{1})$, which can be found in \cite{kowalzig2009hopf}, is clearly a projection of the structure maps on $\ct{D}\XA$. The formalism of the isotopy quotient which we presented in Section \ref{SIso} makes this statement more concrete and shows that we recover the Hopf algebroid $(A,V(A,\XA^{1}))$ as the isotropy quotient of $(A,\ct{D}\XA)$. At the end of this section, we will show this statement directly. 

Now we consider an arbitrary surjective calculus with a pivotal structure as in Section \ref{SSurjAlg} and look at $\iso{H\XA^{1}}$. Since the source and target maps are equal, meaning $a=\ov{a}$, in $\iso{H\XA^{1}}$ we will no longer use the over line notation. We will also abuse notation and write $a$ for the image of element $a\in A$ in $\ab{A}$. 

\begin{prop}\label{PIsoHX} As an algebra, $\iso{H\XA^{1}}$ will be the quotient of $\ab{A}\star T_{\K}\XA^{1} \star T_{\K}\fr{Y}^{1}$ by the relations \eqref{EqAX}, \eqref{EqXA}, \eqref{EqTXop} and 
\begin{align}
\sum_{m}&s_{m} xr_{m}= 0 = \sum_{m}\ov{s_{m} xr_{m}} \label{EqIsoBim}
\\ &\ov{y} \sbt (b_{i}x_{i})-\ov{yb_{i}}\sbt x_{i} = \ov{y}\label{EqIsoInv1}
\\ &  x\sbt \ov{y_{j}c_{j}} - (c_{j}x)\sbt \ov{y_{j}} = x\label{EqIsoInv2} 
\\ y_{j}\sbt \big( \ov{xc_{j}a} -\ov{c_{j}xa}\big)& - (ay_{j})\sbt \big( \ov{xc_{j}} -\ov{c_{j}x}\big) =ax-xa  \label{EqIsoHpf1}
\\ \ov{x_{i}a}\sbt \big(xb_{i}-b_{i}x\big)&- \ov{x_{i}}\sbt \big(axb_{i}-ab_{i}x\big)= \ov{ax}-\ov{xa}\label{EqIsoHpf2}
\end{align}
where $\sum_{m} r_{m}\tn_{\K} s_{m}\in \ct{N}$
\end{prop}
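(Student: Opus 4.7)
The strategy is to apply the general isotopy quotient construction from Section \ref{SIso} directly to the presentation of $H\XA^{1}$ furnished by Corollary \ref{CSurjHXA} and Proposition \ref{PSurjIBXA}, computing how each defining relation transforms under the identification $\source(a)=\target(a)$, i.e., $\ov{a}=a$. The built-in commutation $a\sbt\ov{b}=\ov{b}\sbt a$ coming from the $A^{e}$-ring structure becomes $a\sbt b=b\sbt a$, which by \eqref{EqAX} gives $ab=ba$, so the base algebra is $\ab{A}$ as asserted. Relation \eqref{EqBXA} then drops out as a consequence: by \eqref{EqAX} and \eqref{EqXA}, both sides of $x\sbt a=a\sbt x+\evl(x\tn da)$ reduce to $xa+\evl(x\tn da)=ax+\evl(x\tn da)$, which holds in $\ab{A}$.

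For the relations \eqref{EqSurjBXA} and \eqref{EqSurjInvDef}, the plan is to expand $s_{m}\sbt x\sbt r_{m}$ using \eqref{EqAX} and \eqref{EqXA} to obtain $s_{m}xr_{m}+s_{m}\evl(x\tn dr_{m})$. Since $\sum_{m}r_{m}\tn s_{m}\in\ct{N}$ means $\sum_{m}r_{m}ds_{m}=0$ in $\Omega^{1}$, differentiating $\sum_{m}r_{m}s_{m}=0$ via Leibniz also gives $\sum_{m}(dr_{m})s_{m}=0$; applying $\evl(x\tn -)$ and using the bimodule-map property yields $\sum_{m}\evl(x\tn dr_{m})s_{m}=0$, which equals $\sum_{m}s_{m}\evl(x\tn dr_{m})$ in the commutative base $\ab{A}$. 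Hence \eqref{EqSurjBXA} becomes the first half of \eqref{EqIsoBim}, and the mirror argument using \eqref{EqTXop} transforms \eqref{EqSurjInvDef} into the second half (after invoking the pivotal identification $\XA^{1}=\fr{Y}^{1}$ of Corollary \ref{CSurjHXA}).

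For \eqref{EqSurjInv1} and \eqref{EqSurjInv2}, the commutators $[\ov{y},b_{i}]$ and $[x,\ov{c_{j}}]$ expanded via \eqref{EqTXop} and \eqref{EqXA} contain extra $\evr$ and $\evl$ terms, which can be absorbed using the rewritings $\ov{y}\sbt(b_{i}x_{i})=\ov{b_{i}y}\sbt x_{i}+\ov{\evr(db_{i}\tn y)}\sbt x_{i}$ and $x\sbt\ov{y_{j}c_{j}}=(xc_{j})\sbt\ov{y_{j}}+\evl(x\tn dc_{j})\sbt\ov{y_{j}}$, producing the asymmetric forms \eqref{EqIsoInv1} and \eqref{EqIsoInv2}. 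The main obstacle is then the Hopf relations \eqref{EqSurjHpf1} and \eqref{EqSurjHpf2}. Expanding $[y_{j},a]\sbt[\ov{x},c_{j}]$ produces a pure bracket term $(y_{j}a-ay_{j})\sbt(\ov{xc_{j}}-\ov{c_{j}x})$, mixed terms containing a single evaluation, and a scalar product of two evaluations; the coevaluation identity $y_{j}\evr(dc_{j}\tn x)=x$ (and its $\fr{Y}^{1}$-side analogue) converts the mixed and scalar terms into expressions of the form $ax-xa$ purely inside $\XA^{1}$, and after using commutativity of $\ab{A}$ to move scalars past generators this matches \eqref{EqIsoHpf1}. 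The symmetric argument handles \eqref{EqIsoHpf2}. Assembling mutually inverse algebra maps between the two presentations, in a manner analogous to Propositions \ref{PSurjBXA} and \ref{PSurjIBXA}, then establishes the isomorphism.
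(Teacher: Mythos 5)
Your proposal follows essentially the same route as the paper: take the surjective presentation of $H\XA^{1}$ from Proposition \ref{PSurjIBXA} and Corollary \ref{CSurjHXA}, impose $\source(a)=\target(a)$ and $ab=ba$, and rewrite each defining relation; your treatments of \eqref{EqIsoBim} (via $\sum_{m}(dr_{m})s_{m}=0$ and commutativity of the scalars $\evl(x\tn dr_{m})$ with $s_{m}$ in $\ab{A}$), of \eqref{EqIsoInv1}--\eqref{EqIsoInv2}, and of the Hopf relations all match the paper's computations.

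One step as written is wrong, though it is not load-bearing. You claim that \eqref{EqBXA} "drops out" because $xa+\evl(x\tn da)=ax+\evl(x\tn da)$ "holds in $\ab{A}$". Passing to $\ab{A}$ makes elements of $A$ commute with each other, but it does not make $\XA^{1}$ a symmetric bimodule: $xa$ and $ax$ are the two module actions on $x\in\XA^{1}$ and remain distinct in general (indeed, relations \eqref{EqIsoHpf1} and \eqref{EqIsoHpf2} would be vacuous if $ax=xa$ held). The correct observation is that \eqref{EqBXA} is simply not among the relations of the surjective presentation: in Proposition \ref{PSurjBXA} the generators $(x,\omega)$ have already been eliminated via $(x,db)\equiv[x,\ov{b}]$, which turns \eqref{EqBXA} into a tautology, so there is nothing to check in the isotopy quotient. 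With that aside removed (or corrected), the argument is sound and coincides with the paper's proof.
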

\begin{proof} We simply need to review the relations describing $H\XA^{1}$ with the additional relation that $a=\ov{a}$ and $a\sbt b = b\sbt a$ for $a,b\in A$. Relation \eqref{EqIsoBim} follow directly from relations \eqref{EqSurjBXA} and \eqref{EqSurjInvDef} with the additional observation that 
\begin{equation*}
\sum_{m} s_{m}x \sbt r_{m}= \sum_{m}\big[ (s_{m} xr_{m}) + \evl (s_{m} x\tn dr_{m})\big] + \sum_{m}\big[ (s_{m} xr_{m}) + \evl ( x\tn dr_{m})s_{m}\big]=\sum_{m} (s_{m} xr_{m})
\end{equation*}
and a symmetric equality for $\sum_{m}\ov{s_{m} xr_{m}}= \sum_{m}\ov{ xr_{m}}\sbt s_{m}$ hold. Relations \eqref{EqIsoInv1} and \eqref{EqIsoInv2} follow directly from \eqref{EqSurjInv1} and \eqref{EqSurjInv2}, respectively, and the assumption that $\ov{a}\sbt x=a\sbt x= ax$. For \eqref{EqIsoHpf1} and \eqref{EqIsoHpf2}, we apply the following arguments to \eqref{EqSurjHpf1} and \eqref{EqSurjHpf2}:
\begin{align*}
[y_{j},a]\sbt [\ov{x},c_{j}]  =&\big((y_{j}a)+ \evl (y_{j}\tn da )-(ay_{j})\big) \sbt \big( \ov{c_{j}x}+ \evr (dc_{j}\tn x )-\ov{xc_{j}}  \big)
\\ =&\big((y_{j}a)+ \evl (y_{j}\tn da )-(ay_{j})\big) \sbt \big( \ov{c_{j}x}-\ov{xc_{j}} \big)  +\big( y_{j}\sbt a- (ay_{j})\big)\sbt \evr (dc_{j}\tn x )
\\=& \big((y_{j}a)+ \evl (y_{j}\tn da )-(ay_{j})\big) \sbt \big( \ov{c_{j}x}-\ov{xc_{j}} \big) +  x\sbt a + \evl ( y_{j} \tn d\evr (dc_{j}\tn x ))\sbt a 
\\& -ax - a\sbt \evl ( y_{j} \tn d\evr (dc_{j}\tn x ))
\\=& \big((y_{j}a)+ \evl (y_{j}\tn da )-(ay_{j})\big) \sbt \big( \ov{c_{j}x}-\ov{xc_{j}} \big)+xa - ax  + \evl(x\tn da) 
\end{align*}
\begin{align*}
[\ov{x_{i}},a]\sbt [ x, b_{i}]=& \big(\ov{ax_{i}}+\evr (da \tn x_{i} )-\ov{x_{i}a}\big) \sbt \big((xb_{i})+ \evl (x\tn db_{i} )-(b_{i}x)\big)
\\=& \big(\ov{ax_{i}}+\evr (da \tn x_{i} )-\ov{x_{i}a}\big) \sbt \big((xb_{i})-(b_{i}x)\big) + \big(\ov{x_{i}}\sbt a -\ov{x_{i}a}\big)\sbt \evl (x\tn db_{i} )
\\=& \big(\ov{ax_{i}}+\evr (da \tn x_{i} )-\ov{x_{i}a}\big) \sbt \big((xb_{i})-(b_{i}x)\big)+ \ov{x} \sbt a +  \evr (d\evl (x\tn db_{i} )\tn x_{i} )\sbt a  
\\ & -\ov{xa} - \evr (d\evl (x\tn db_{i} )\tn x_{i} a) 
\\=& \big(\ov{x_{i}a}-\evr (da \tn x_{i} )-\ov{ax_{i}}\big) \sbt \big((xb_{i})-(b_{i}x)\big)+\ov{ax}-\ov{xa} + \evr ( da\tn x)\qedhere
\end{align*}
\end{proof}
Following from Corollary \ref{CDXA} we can describe $\iso{\ct{D} \XA}$ as a quotient of $\iso{H\XA^{1}} $:
\begin{corollary}\label{CIsoDXA} If $(A, d,\wedge , \Omega^{n})$ is a surjective calculus and pivotal, then $\iso{\ct{D} \XA}$, is the quotient of $\iso{H\XA^{1}} $ by relations \eqref{EqSujExt1}, \eqref{EqSujExt2} and \eqref{EqFlat}. 
\end{corollary}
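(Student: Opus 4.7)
The plan is to reduce this to Corollary \ref{CDXA} combined with an iterated quotient argument. By Corollary \ref{CDXA}, $\ct{D}\XA = H\XA^{1}/K$, where $K$ is the two-sided ideal of $H\XA^{1}$ generated by the elements appearing in \eqref{EqSujExt1}, \eqref{EqSujExt2} and \eqref{EqFlat}. Meanwhile, by definition, $\iso{H\XA^{1}} = H\XA^{1}/I_{H}$ and $\iso{\ct{D}\XA} = \ct{D}\XA/I_{\ct{D}}$, where $I_{H}$ and $I_{\ct{D}}$ are the respective isotopy ideals. The goal is therefore to establish the isomorphism
\begin{equation*}
\ct{D}\XA / I_{\ct{D}} \;\cong\; \iso{H\XA^{1}} / \pi(K),
\end{equation*}
where $\pi: H\XA^{1}\twoheadrightarrow \iso{H\XA^{1}}$ is the natural projection.

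First I would observe that since both isotopy ideals are generated by elements of the form $\source(a)-\target(a)$ for $a\in A$, and these live in the image of the projection $H\XA^{1}\twoheadrightarrow \ct{D}\XA$, the isotopy ideal $I_{\ct{D}}$ is precisely the image of $I_{H}$ in $\ct{D}\XA$. Combining this with the quotient description $\ct{D}\XA = H\XA^{1}/K$, a standard application of the third isomorphism theorem yields
\begin{equation*}
\iso{\ct{D}\XA} = \frac{H\XA^{1}/K}{(I_{H}+K)/K} \;\cong\; \frac{H\XA^{1}}{I_{H}+K} \;\cong\; \frac{\iso{H\XA^{1}}}{\pi(K)},
\end{equation*}
which is exactly the claimed description at the level of algebras.

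The remaining task is to check that this algebra isomorphism is in fact an isomorphism of Hopf algebroids over $\ab{A}$. By Proposition \ref{prop:HopfBasechng} and the work of Section \ref{ssec:basechng}, the Hopf algebroid structure on both sides is uniquely determined by the requirement that the various projection maps be Hopf algebroid morphisms; thus compatibility is automatic once we know $K$ projects into a Hopf ideal of $\iso{H\XA^{1}}$. This last point follows because $K$ is already a Hopf ideal of $H\XA^{1}$ (as established in the proof of Corollary \ref{CDXA}, where $K$ is shown to give a Hopf algebroid quotient $\ct{D}\XA$) and Hopf ideals push forward along Hopf algebroid morphisms.

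The main technical obstacle, and the only place where the surjectivity hypotheses are essentially used, is ensuring that the relations \eqref{EqSujExt1}, \eqref{EqSujExt2} and \eqref{EqFlat} actually make sense and suffice in the presentation of $\iso{H\XA^{1}}$ given by Proposition \ref{PIsoHX}. Concretely, one must verify that after setting $a=\ov{a}$ in $\iso{H\XA^{1}}$, the images of these relations are well-formed elements of the quotient and no further relations from \eqref{EqFlExt} need to be carried over. This is precisely the content of the surjectivity reduction of Proposition \ref{PExt} and the redundancy of \eqref{EqFlExt} noted in Section \ref{SPrelimNCG}, so it reduces to citing the preceding machinery rather than any new computation.
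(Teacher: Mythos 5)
Your argument is correct and is essentially the justification the paper leaves implicit: Corollary \ref{CIsoDXA} is stated without a proof as an immediate consequence of Corollary \ref{CDXA}, and your third-isomorphism-theorem computation, together with the observation that the isotopy ideal of $\ct{D}\XA$ is the image of that of $H\XA^{1}$ under the projection, is exactly the intended commutation of the two quotients. (Your closing paragraph somewhat overstates the difficulty of the final step, since the images of the relations in $\iso{H\XA^{1}}$ are automatically well-defined, but nothing there is incorrect.)
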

Unfortunately at the moment of writing this work, the author is not aware of a simplification of relations \eqref{EqSujExt1}, \eqref{EqSujExt2} in $\iso{\ct{D} \XA}$. However as mentioned in Section \ref{SSurjective}, when the DGA is the maximal prolongation of $(A,\Omega^{1},d)$, these relations become redundant, and the examples which concern us in this work are exactly of this form. 

Now we us review the coring structure on $\iso{\ct{D}\XA}$ and $\iso{H\XA^{1}}$. Since the source and target maps are now equal in this setting, the comultiplication is a map from the left $\ab{A}$-module to $ \iso{H\XA^{1}}$ to the tensor product of this left $\ab{A}$-module with itself:
\begin{align*}
\Delta (x) = & x\otimes 1 +  x \sbt \ov{b_{i}} \otimes x_{i}= x\otimes 1 +[ x , b_{i}] \otimes x_{i}= x\otimes 1 - b_{i}x \otimes x_{i} +xb_{i} \otimes x_{i}+ \evl(x\tn db_{i}) \otimes x_{i}
\\&= x\otimes 1 - x \otimes b_{i}x_{i} +xb_{i} \otimes x_{i}+ 1 \otimes \evl(x\tn db_{i})\sbt x_{i}= x\otimes 1 - x \otimes b_{i}x_{i} +xb_{i} \otimes x_{i}+ 1 \otimes x
\end{align*}
In a similar manner we also have that $\Delta (\ov{x})= 1\tn \ov{y}+  \ov{y_{j}c_{j}}\tn  \ov{y}+\ov{y_{j}}\tn  \ov{c_{j}y} - \ov{y}\tn 1$ and the Hopf algebroid structure is defined by
\begin{align*}
x_{(+)} \tn_{\ab{A}} x_{(-)} =&x \tn_{\ab{A}} 1 - [x,c_{j}] \tn_{\ab{A}} \ov{y_{j}} =x \tn_{\ab{A}} 1 - 1\tn_{\ab{A}} x + c_{j}x \tn_{\ab{A}} \ov{y_{j}} - xc_{j} \tn_{\ab{A}} \ov{y_{j}}  
\end{align*}
Finally observe that for a Lie-Rinehart pair $(A,\XA^{1}, \tau, [,])$ with fgp $\XA^{1}$, relations \eqref{EqIsoBim}, \eqref{EqIsoHpf1} and \eqref{EqIsoHpf2} in Proposition \ref{PIsoHX} all become trivial since $\XA^{1}$ is a symmetric bimodule. In fact, relation \eqref{EqIsoInv1} simplifies as
$$\ov{y} \sbt (b_{i}x_{i})-\ov{yb_{i}}\sbt x_{i} = \ov{y} \sbt b_{i} \sbt x_{i}-\ov{b_{i}y}\sbt x_{i} = \evr (db_{i}\tn y) \sbt x_{i} = y $$
and by a symmetric argument both \eqref{EqIsoHpf1} and \eqref{EqIsoHpf2} reduce to $x=\ov{x}$. Since $\evr( da\tn x) = \evl (x\tn da)$ in this setting, relations in \eqref{EqTXop} become redundant and $\iso{H\XA^{1}}$ simply becomes isomorphic to $T\XA^{1}_{\sbt}$. Recall from \cite{ghobadi2020hopf}, that the $\XA^{2}= \bigwedge^{2}(\Omega^{1})$ and is the maximal prolongation of $(A,\Omega^{1},d)$ in this case. Hence relations \eqref{EqSujExt1} and \eqref{EqSujExt1} become redundant. Hence, $\iso{\ct{D}\XA}$ is simply the quotient of $T\XA^{1}_{\sbt}$ by relation \eqref{EqFlat}. As seen in Section 5.3 of \cite{ghobadi2020hopf}, this relation reduces to $x\sbt y -y\sbt x =[x,y]$ where $[x,y]$ denotes the Lie bracket on $\XA^{1}$ and we recover $V(A,\XA^{1})$ as an algebra. As a Hopf algebroid, the previous formulae for $\iso{\ct{D}\XA}$ simplify to the desired structure of $V(A,\XA^{1})$ since $\XA^{1}$ is a symmetric bimodule. 
\section{Calculus for Digraphs}\label{SGrph}
Let $(V,E)$ be a pair of finite sets defining a directed graph i.e. there exist maps $s,t:E \rightarrow V$ sending each $e$ element of $E$, called an \emph{edge}, to its source vertex $s(e)$ and its target vertex $t(e)$, respectively. Throughout this work, unless mentioned otherwise, we will assume both that no edge has the same source and target, and that there is at most one edge between any vertices. Hence, any edge is uniquely determined by its source and target and $e$ can alternatively be denote by $s(e)\rightarrow t(e)$. Any such directed graph defines a surjective first order calculus and in this section we will describe $H\XA^{1}$ for this calculus.

\textbf{Notation.} To ease computation, we will use the notation $a\nrightarrow b$ instead of $\arr{a}{b}\notin E$ and use 
\begin{align}
\delta_{a\rightarrow b} = \begin{cases}1 \ \text{if}\ a\rightarrow b\in E
\\0 \ \text{otherwise}
\end{cases},
\quad 
\delta_{a\nrightarrow b} = \begin{cases}1 \ \text{if}\ a\rightarrow b\notin E
\\0 \ \text{otherwise}
\end{cases}
\end{align}
throughout our computation. 

Let $A= \K (V)=\lbrace f:V\rightarrow \K \rbrace$ be the algebra of functions on $V$. It follows that $A$ is a semisimple algebra with a finite basis $\lbrace f_p \mid p\in V\rbrace$, where $f_{p}(q)=\delta_{p,q}$ for any $p,q\in V$, which form a complete set of idempotents $\lbrace f_p \mid p\in V\rbrace$ for $A$. 

The bimodule $\Omega^{1}:= \K. E$ has basis elements $\omega_{\arr{a}{b}}$ corresponding to edges $\arr{a}{b}\in E$, with its bimodule actions defines by $f_{p}. \omega_{\arr{a}{b}} . f_{q}= \delta_{p,a}\delta_{b,q}\omega_{\arr{a}{b}}$. The differential of the calculus is defined by  
\begin{equation}
df = \sum_{\arr{a}{b}\in E} [f(b)-f(a)].\omega_{\arr{a}{b}}\ \Longleftrightarrow\ df_{p} = \sum_{a:\arr{a}{p}\in E} \omega_{\arr{a}{p}} - \sum_{b:\arr{p}{b}\in E} \omega_{\arr{p}{b}}
\end{equation}
The dual bimodule $\XA^{1}$ is spanned, as a vectorspace, by elements of the form $e_{b\leftarrow a}$ with its $A$-bimodule structure defined by $f_{p}. e_{b\leftarrow a}. f_{q}= \delta_{p,b}\delta_{a,q}e_{b\leftarrow a}$. The calculus carries a canonical pivotal structure defined by
\begin{align}
\cvl(1)&= \sum _{\arr{s}{t}\in E} \omega_{\arr{s}{t}}\otimes e_{\arl{s}{t}},\hspace{1cm} \evl(e_{\arl{a}{b}}\otimes \omega_{\arr{p}{q}}) = \delta_{p,a}.\delta_{q,b} f_{b}
\\\cvr(1)&= \sum _{\arr{s}{t}\in E} e_{\arl{s}{t}}\otimes \omega_{\arr{s}{t}},\hspace{1cm} \evr(\omega_{\arr{p}{q}}\otimes e_{\arl{a}{b}}) = \delta_{p,a}.\delta_{q,b} f_{a}
\end{align}
For arbitrary $\arr{a}{b},\arr{p}{q}\in E$. 

Notice that the calculus is surjective and that in the notation of Section \ref{SSurjAlg}, $\ker (.) =\K.\{ f_{p}\tn_{\K}f_{q} \mid p,q \in V, \ p\neq q\} $ and $\ct{N}= \K.\{ f_{p}\tn_{\K}f_{q} \mid  \ p\neq q,\ \arr{p}{q}\notin E \} $ so that $\Omega^{1}= \ker (.)/\ct{N}$. Additionally, by the definition of $d$ we have that $\omega_{\arr{a}{b}}= f_{a}.(df_{b})= -(df_{a}).f_{b}$ and thereby
$$\cvl (1)= db_{i}\tn x_{i}= \sum _{\arr{a}{b}\in E} -df_{a} \otimes e_{\arl{a}{b}} , \hspace{1cm}\cvr(1)= y_{i}\tn dc_{i} =\sum _{\arr{a}{b}\in E} e_{\arl{a}{b}}\otimes df_{b} $$ 
With this information, we can describe $H\XA^{1}$ in terms of generators and relations by Corollary \ref{CSurjHXA}. First let us write out relations \eqref{EqAX}, \eqref{EqXA}, \eqref{EqBXA}, \eqref{EqSurjBXA}, \eqref{EqSurjInvDef} for the algebra $H\XA^{1}$. The algebra will be generated by elements $f_{p}\in A$, $\ov{f_{q}}\in A^{\op}$, $e_{\arl{a}{b}}$, $\ov{e_{\arl{p}{q}}}$ corresponding to edges $\arr{a}{b}, \arr{p}{q}\in E$ and the mentioned relations translate as follows: 
\begin{align}
f_{p}\sbt f_{q}= \delta_{p,q}f_{p},\quad  \ov{f_{p}}\sbt\ov{f_{q}}&=\delta_{p,q}f_{p}, \quad f_{p}\sbt \ov{f_{q}}= \ov{f_{q}}\sbt f_{p}
\\  f_{p}\sbt  e_{\arl{a}{b}}= \delta_{p,b}e_{\arl{a}{b}},& \quad  \ov{f_{q}}\sbt\ov{e_{\arl{a}{b}}}= \delta_{a,q}\ov{e_{\arl{a}{b}}}
\\  e_{\arl{a}{b}}\sbt  f_{p}= \delta_{p,a}(e_{\arl{a}{b}}-f_{b})+\delta_{b,p}f_{b},& \quad  \ov{e_{\arl{a}{b}}}\sbt \ov{f_{q}}= \delta_{q,b}(\ov{e_{\arl{a}{b}}} + \ov{f_{a}}) -\delta_{q,a} \ov{f_{a}}
\\ f_{p}\sbt \ov{e_{\arl{a}{b}}}\sbt  f_{q} = &0 = \ov{f_{q}}\sbt e_{\arl{a}{b}}\sbt \ov{f_{p}}, \ \text{ when } p\nrightarrow q \text{ and }p\neq q
\end{align}
Since $1= \sum_{p,q\in V} f_{p}\sbt \ov{f_{q}}$, we can observe that
\begin{align*}
e_{\arl{s}{t}}&=\left(\sum_{p,q\in V} f_{p}\sbt \ov{f_{q}}\right) \sbt e_{\arl{s}{t}} \sbt \left(\sum_{a,b\in V} f_{a}\sbt \ov{f_{b}}\right)= \sum_{q,b\in V}  \ov{f_{q}}\sbt e_{\arl{s}{t}} \sbt \ov{f_{b}}+\sum_{b,q,a\in V}  \delta_{b,q}.\ov{f_{q}}\sbt (\delta_{a,t}f_{t}-\delta_{a,s}f_{t}) 
\\&= \sum_{\arr{b}{q}\in E}  \ov{f_{q}}\sbt e_{\arl{s}{t}} \sbt \ov{f_{b}}+ \sum_{q\in V}  \ov{f_{q}}\sbt e_{\arl{s}{t}} \sbt \ov{f_{q}}
\end{align*}
and similarly $\ov{e_{\arl{s}{t}}}= \sum_{\arr{p}{q}\in E}  f_{p}\sbt \ov{e_{\arl{s}{t}}} \sbt f_{q}+ \sum_{q\in V}  f_{q}\sbt \ov{e_{\arl{s}{t}}} \sbt f_{q}$. Hence, we can introduce a new set of generators for the algebra: 
\begin{align}
&\points{p}{q} :=f_{p}\sbt \ov{f_{q}},\quad\quad (\genleft{a}{b}{c}{d}):= - \ov{f_{d}}\sbt e_{\arl{a}{b}} \sbt \ov{f_{c}}, \quad\quad  (\genright{a}{b}{c}{d}):= f_{a}\sbt \ov{e_{\arr{c}{d}}} \sbt f_{b}
\\& (\genlefty{a}{b}{q}):=  \ov{f_{q}}\sbt e_{\arl{a}{b}} \sbt \ov{f_{q}}- f_{b}\sbt \ov{f_{q}}, \quad \quad (\genrighty{c}{d}{p}):=f_{p}\sbt \ov{e_{\arr{c}{d}}} \sbt f_{p} +  f_{p}\sbt \ov{f_{c}}
\end{align}
corresponding to vertices $p,q\in V$ and edges $\arr{a}{b},\arr{c}{d}\in E$, so that the above relations are simplified to 
\begin{align*}
 \points{p}{q}\sbt (\genlefty{a}{b}{c})\sbt \points{e}{f}=\delta_{p,b}.\delta_{q,c,f}.\delta_{e,a}. (\genlefty{a}{b}{c}),& \quad \points{p}{q}\sbt (\genrighty{c}{d}{a})\sbt \points{e}{f}=\delta_{p,a,e}.\delta_{q,c}.\delta_{f,d}. (\genrighty{c}{d}{a})
\\\points{p}{q}\sbt (\genleft{a}{b}{c}{d})\sbt \points{e}{f}=\delta_{p,b}.\delta_{q,d}.\delta_{e,a}.\delta_{f,c}. (\genleft{a}{b}{c}{d}),& \quad \points{p}{q}\sbt (\genright{a}{b}{c}{d})\sbt \points{e}{f}=\delta_{p,a}.\delta_{q,c}.\delta_{e,b}.\delta_{f,d}. (\genright{a}{b}{c}{d})
\end{align*}
We leave the verifications of these simple relations to the reader. From this new set of generators, it should be clear that $H\XA^{1}$ can be viewed as a quotient of the path algebra of the double of the digraph, $\K D^{\mathrm{db}}$. Additionally, the previous generators can be recovered as follows:
\begin{align}
e_{\arl{s}{t}}=&\sum_{q\in V}(\genlefty{s}{t}{q})-\sum_{\arr{a}{b}\in E}(\genleft{s}{t}{a}{b}) +\sum_{q\in V}\points{t}{q} 
 \\\ov{e_{\arl{s}{t}}}=&\sum_{q\in V}(\genrighty{s}{t}{q})+ \sum_{\arr{a}{b}\in E}(\genright{a}{b}{s}{t}) -\sum_{q\in V}\points{q}{s} 
\end{align}
\begin{thm}\label{THDig} The Hopf algebroid $H\XA^{1}$ arising from the differential calculus of a diagraph $D=(E,V)$ is isomorphic to the quotient of the path algebra $\K D^{\mathrm{db}}$ by the following additional relations  
\begin{align}
\sum_{i: \arr{q}{i} \in E}(\genright{a}{b}{q}{i})\sbt (\genleft{p}{b}{q}{i}) = \delta_{a,p}&. \points{a}{q}= \sum_{i: \arr{q}{i} \in E}(\genright{q}{i}{a}{b})\sbt (\genleft{q}{i}{p}{b}) \label{EqDigH1}
\\ \sum_{i:\arr{i}{q} \in E}(\genleft{d}{a}{i}{q})\sbt (\genright{d}{p}{i}{q}) = \delta_{a,p}&.\points{a}{q}= \sum_{i:\arr{i}{q} \in E}(\genleft{i}{q}{d}{a})\sbt (\genright{i}{q}{d}{p})\label{EqDigH2}
\\(\genrighty{s}{t}{q})= \sum_{i:\arr{q}{i}\in E}(\genright{q}{i}{s}{t})\sbt (\genlefty{q}{i}{t}),&\ \ \sum_{i:\arr{i}{q}\in E} (\genleft{s}{t}{i}{q})\sbt (\genrighty{i}{q}{s})= (\genlefty{s}{t}{q})\label{EqDigH3}
\end{align}
for arbitrary $q\in V$, $\arr{s}{t}\in E$ and pairs $ \arr{a}{b},\arr{p}{b}\in E $ and $ \arr{d}{a}, \arr{d}{p} \in E$. 
\end{thm}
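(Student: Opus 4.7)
The plan is to prove the theorem by establishing mutually inverse algebra homomorphisms between $H\XA^{1}$ and the quotient of $\K D^{\mathrm{db}}$ by relations \eqref{EqDigH1}--\eqref{EqDigH3}. The groundwork already done in the excerpt furnishes both the new generators and their behaviour under the bimodule actions of $A^{e}$; this effectively identifies each new generator with one of the four families of edges of $D^{\mathrm{db}}$ listed in the introduction (edges of type $D\times D$, $D^{-1}\square D^{-1}$, $D\square D^{-1}$ between distinct vertices, and the two ``diagonal'' families $(\genlefty{a}{b}{q})$ and $(\genrighty{c}{d}{p})$ tying the double structure together). So first I would verify that the listed identities
$$\points{p}{q}\sbt (\genleft{a}{b}{c}{d})\sbt \points{e}{f}=\delta_{p,b}\delta_{q,d}\delta_{e,a}\delta_{f,c}\,(\genleft{a}{b}{c}{d}),\quad \text{etc.}$$
really do give $H\XA^{1}$ the structure of a quotient of the path algebra $\K D^{\mathrm{db}}$, by checking that $1=\sum_{p,q\in V}\points{p}{q}$ is a complete set of orthogonal idempotents and that each of the four families of generators forms the expected edge set between these idempotents.

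Next I would construct the forward map $\K D^{\mathrm{db}} \to H\XA^{1}$ on generators by sending each path algebra edge to the corresponding element defined in the excerpt and verify that the three relations \eqref{EqDigH1}--\eqref{EqDigH3} hold in $H\XA^{1}$. For \eqref{EqDigH1}, I would expand the relation \eqref{EqSurjHpf1} applied to $x=e_{\arl{p}{b}}$ and $a=f_{q}$, using that $\cvr(1)=\sum_{\arr{s}{t}\in E}e_{\arl{s}{t}}\otimes df_{t}$ and $\evl(e_{\arl{p}{b}}\otimes df_{q})=\delta_{q,b}f_{b}-\delta_{q,p}f_{b}$, and then re-express the commutators $[y_{j},\ov{f_{q}}]\sbt[\ov{e_{\arl{p}{b}}},f_{q}]$ in the new generators. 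An entirely symmetric computation for \eqref{EqSurjHpf2} yields \eqref{EqDigH2}, and for \eqref{EqDigH3} I would simply rewrite \eqref{EqSurjInv1}--\eqref{EqSurjInv2} using the change of basis formulas for $e_{\arl{s}{t}}$ and $\ov{e_{\arl{s}{t}}}$ given just before the theorem, keeping only the vertex components since the cross-terms cancel against the path algebra idempotents.

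For the reverse direction I would define a map from $\K D^{\mathrm{db}}/(\text{relations})$ to $H\XA^{1}$ by the formulas defining the new generators and verify that it is a two-sided inverse on generators using the explicit expressions
$$ e_{\arl{s}{t}}=\sum_{q\in V}(\genlefty{s}{t}{q})-\sum_{\arr{a}{b}\in E}(\genleft{s}{t}{a}{b})+\sum_{q\in V}\points{t}{q},\quad \ov{e_{\arl{s}{t}}}=\sum_{q\in V}(\genrighty{s}{t}{q})+\sum_{\arr{a}{b}\in E}(\genright{a}{b}{s}{t})-\sum_{q\in V}\points{q}{s}. $$
The main obstacle will be verifying that the defining relations of $H\XA^{1}$ from Corollary \ref{CSurjHXA} (namely \eqref{EqAX}--\eqref{EqBXA}, \eqref{EqTXop}, \eqref{EqSurjBXA}, \eqref{EqSurjInvDef}, \eqref{EqSurjInv1}--\eqref{EqSurjHpf2}) all follow from the path algebra structure together with \eqref{EqDigH1}--\eqref{EqDigH3}; the Leibniz-type relations \eqref{EqXA} and \eqref{EqBXA} and the surjectivity conditions \eqref{EqSurjBXA}, \eqref{EqSurjInvDef} are automatic from the support conditions of the new generators (since each such generator is supported precisely on edges of $D$), but the four Hopf-type relations produce lengthy sums over $\arr{i}{q}\in E$ or $\arr{q}{i}\in E$, and tracking that every term in these sums is accounted for by exactly one of \eqref{EqDigH1}--\eqref{EqDigH3} — together with the diagonal summands handled by the y-labelled generators — is the computationally delicate part. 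Once this bookkeeping is done, mutual inverseness reduces to inspection on each generator family.
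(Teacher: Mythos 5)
Your approach is essentially the paper's: the change of generators carried out before the theorem already exhibits $H\XA^{1}$ as a quotient of $\K D^{\mathrm{db}}$, and the proof then consists of expanding the four remaining defining relations \eqref{EqSurjInv1}, \eqref{EqSurjInv2}, \eqref{EqSurjHpf1}, \eqref{EqSurjHpf2} in the new generators and splitting each expansion into components by multiplying on both sides with the idempotents $\points{p}{q}$. Your framing via a pair of mutually inverse algebra homomorphisms is a more formal packaging of the same invertible change of presentation, and your observation that \eqref{EqAX}--\eqref{EqBXA}, \eqref{EqSurjBXA}, \eqref{EqSurjInvDef} are absorbed into the path algebra structure matches what the paper does before stating the theorem.

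There is, however, one bookkeeping error you should fix before carrying out the computation. The cross-terms in the expansions of \eqref{EqSurjInv1} and \eqref{EqSurjInv2} do \emph{not} cancel against the idempotents: each of these two relations yields, besides one half of \eqref{EqDigH3}, exactly one of the two equalities packaged in \eqref{EqDigH1} (respectively \eqref{EqDigH2}), namely $\sum_{i:\arr{q}{i}\in E}(\genright{q}{i}{s}{t}) \sbt (\genleft{q}{i}{e}{t})= \delta_{s,e}\points{q}{s}$ and its mirror $\sum_{i:\arr{i}{q}\in E}(\genleft{s}{t}{i}{q}) \sbt (\genright{s}{f}{i}{q})= \delta_{f,t}\points{t}{q}$. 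The relations \eqref{EqSurjHpf1} and \eqref{EqSurjHpf2} supply only the two remaining halves, and moreover with the roles swapped relative to your assignment: \eqref{EqSurjHpf1} produces the second equality of \eqref{EqDigH2}, while \eqref{EqSurjHpf2} produces the first equality of \eqref{EqDigH1}. If you literally discard the cross-terms of the Inv-expansions as you propose, you will be missing two of the six sub-relations, and the resulting quotient of $\K D^{\mathrm{db}}$ will be strictly larger than $H\XA^{1}$. With that correction the plan goes through as in the paper.
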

\begin{proof} All we must do is write out relations \eqref{EqSurjInv1}, \eqref{EqSurjInv2}, \eqref{EqSurjHpf1} and \eqref{EqSurjHpf2} in terms of the new generators. We first observe that \eqref{EqSurjInv1} simplifies as follows for $\ov{e_{\arl{s}{t}}}\in \mathfrak{Y}^{1}$:
\begin{align*}
  \ov{e_{\arl{s}{t}}}=& \sum_{\arr{a}{b}\in E}[\ov{e_{\arl{s}{t}}},-f_{a}]\sbt e_{\arl{a}{b}}=\sum_{\arr{a}{b}\in E}f_{a}\sbt \ov{e_{\arl{s}{t}}}\sbt e_{\arl{a}{b}}-\sum_{\arr{a}{b}\in E} \ov{e_{\arl{s}{t}}}\sbt (f_{a}\sbt e_{\arl{a}{b}})
  \\=&\sum_{\arr{a}{b}\in E}f_{a}\sbt \ov{e_{\arl{s}{t}}}\sbt e_{\arl{a}{b}}
\end{align*}
We apply our change of variables to this equation:
\begin{align*}
 \sum_{\arr{a}{b}\in E}(\genright{a}{b}{s}{t})&+\sum_{q\in V}(\genrighty{s}{t}{q}) -\sum_{q\in V}\points{q}{s} 
\\=&\sum_{\substack{\arr{a}{b}\in E\\r\in V}}\points{a}{r}\sbt \left(\sum_{\arr{c}{d}\in E}(\genright{c}{d}{s}{t})+\sum_{q\in V}(\genrighty{s}{t}{q}) -\sum_{q\in V}\points{q}{s}  \right)\sbt \left(\sum_{p\in V}(\genlefty{a}{b}{p})-\sum_{\arr{e}{f}\in E}(\genleft{a}{b}{e}{f}) +\sum_{p\in V}\points{b}{p} \right)
\\=&\sum_{\arr{a}{b}\in E}\left(\genright{a}{b}{s}{t})\sbt (\genlefty{a}{b}{t})-\sum_{e:\arr{e}{t}\in E}(\genright{a}{b}{s}{t}) \sbt (\genleft{a}{b}{e}{t})+ (\genright{a}{b}{s}{t})\right) 
\end{align*}
We can now use the path algebra structure for the new generators. We first multiply the equation by $\points{q}{s}$ on the left and $\points{q}{e}$ on the right, for fixed $q\in V$ and $e\in V$ such that $\arr{e}{t}\in E$, to obtain 
\begin{equation}\label{EqInProof1}
\sum_{i:\arr{q}{i}\in E}(\genright{q}{i}{s}{t}) \sbt (\genleft{q}{i}{e}{t})= \delta_{s,e}\points{q}{s} 
\end{equation}
and secondly, we multiply the equation by $\points{q}{s}$ on the left and $\points{q}{t}$ on the right to obtain: 
\begin{equation}\label{EqInProof2}
(\genrighty{s}{t}{q})= \sum_{i:\arr{q}{i}\in E}(\genright{q}{i}{s}{t})\sbt (\genlefty{q}{i}{t})
\end{equation}
Hence, \eqref{EqSurjInv1} is equivalent to \eqref{EqInProof1} and \eqref{EqInProof2} holding in terms of the new generators. Now we look at \eqref{EqSurjInv2} for $e_{\arl{s}{t}}\in \XA^{1} $ and simplify it as $e_{\arl{s}{t}}=\sum_{\arr{a}{b}\in E} [e_{\arl{s}{t}} ,  \ov{f_{b}}]\sbt \ov{e_{\arl{a}{b}}}= -\sum_{\arr{a}{b}\in E} \ov{f_{b}}\sbt e_{\arl{s}{t}}\sbt \ov{e_{\arl{a}{b}}} $. We then apply our change of generators to this equation:
\begin{align*}
 \sum_{q\in V}(\genlefty{s}{t}{q})&-\sum_{\arr{c}{d}\in E}(\genleft{s}{t}{c}{d}) +\sum_{q\in V}\points{t}{q}=  
\\ & -\sum_{\substack{\arr{a}{b}\in E,\\r\in V}} \points{r}{b}\sbt \left(\sum_{q\in V}(\genlefty{s}{t}{q}) -\sum_{\arr{c}{d}\in E}(\genleft{s}{t}{c}{d}) +\sum_{q\in V}\points{t}{q}\right)\sbt \left( \sum_{\arr{e}{f}\in E}(\genright{e}{f}{a}{b})+ \sum_{p\in V}(\genrighty{a}{b}{p}) -\sum_{p\in V}\points{p}{a} \right) 
\\=&\sum_{\arr{a}{b}\in E}  \left( \sum_{f:\arr{s}{f}\in E}(\genleft{s}{t}{a}{b}) \sbt (\genright{s}{f}{a}{b})+(\genleft{s}{t}{a}{b})\sbt (\genrighty{a}{b}{s}) -(\genleft{s}{t}{a}{b})\right)
\end{align*}
Similar to \eqref{EqSurjInv1}, we use the path algebra structure and see that the equation is equivalent to the following relations holding: 
\begin{equation*}
\sum_{i:\arr{i}{q}\in E}(\genleft{s}{t}{i}{q}) \sbt (\genright{s}{f}{i}{q})= \delta_{f,t}\points{t}{q},\quad \sum_{i:\arr{i}{q}\in E} (\genleft{s}{t}{i}{q})\sbt (\genrighty{i}{q}{s})= (\genlefty{s}{t}{q})
\end{equation*}
Now we expand relation \eqref{EqSurjHpf1} for fixed pair $e_{\arl{s}{t}}\in \XA^{1}$ and $q\in V$:
\begin{align*}
\delta_{q,t} \ov{f_{t}}-\delta_{q,s} \ov{f_{t}}=&\ov{\evl (e_{\arl{s}{t}}\tn df_{q})}= \sum_{\arr{a}{b}\in E} [e_{\arl{a}{b}},\ov{f_{q}}] \sbt [\ov{e_{\arl{s}{t}}},f_{b}]  
\\=&\sum_{\arr{a}{b}\in E} \delta_{q,s}e_{\arl{a}{b}} \sbt \ov{e_{\arl{s}{t}}}\sbt f_{b}-\sum_{\arr{a}{b}\in E} \ov{f_{q}}\sbt e_{\arl{a}{b}} \sbt \ov{e_{\arl{s}{t}}}\sbt f_{b}- \sum_{\arr{a}{b}\in E} [e_{\arl{a}{b}}\sbt f_{b},\ov{f_{q}}] \sbt \ov{e_{\arl{s}{t}}} 
\\=&\sum_{\arr{a}{b}\in E} \delta_{q,s}e_{\arl{a}{b}} \sbt \ov{e_{\arl{s}{t}}}\sbt f_{b}-\sum_{\arr{a}{b}\in E} \ov{f_{q}}\sbt e_{\arl{a}{b}} \sbt \ov{e_{\arl{s}{t}}}\sbt f_{b}
\end{align*}
We use the following observation for arbitrary $\arr{a}{b}\in E$ to ease calculations:
\begin{align*}
e_{\arl{a}{b}} \sbt \ov{e_{\arl{s}{t}}}\sbt f_{b}&= \left( \sum_{p\in V}(\genlefty{a}{b}{p})-\sum_{\arr{c}{d}\in E}(\genleft{a}{b}{c}{d}) +\sum_{p\in V}\points{b}{p}\right) \sbt \left(\sum_{e:\arr{e}{b}\in E}(\genright{e}{b}{s}{t})+ (\genrighty{s}{t}{b}) -\points{b}{s}\right)
\\&= -\sum_{d:\arr{s}{d}\in E}(\genleft{a}{b}{s}{d})\sbt  (\genright{a}{b}{s}{t})+ (\genlefty{a}{b}{s})\sbt (\genright{a}{b}{s}{t})+ (\genrighty{s}{t}{b}) -\points{b}{s}
\end{align*}
Thereby we re-write equation \eqref{EqSurjHpf1} in terms of our new generators:
\begin{align*}
\delta_{q,t} \sum_{p\in V}\points{p}{t}-\delta_{q,s} \sum_{p\in V}\points{p}{t}=&\delta_{q,s}\sum_{\arr{a}{b}\in E}\left(\sum_{d:\arr{s}{d}\in E}-(\genleft{a}{b}{s}{d})\sbt  (\genright{a}{b}{s}{t})+ (\genlefty{a}{b}{s})\sbt (\genright{a}{b}{s}{t})+ (\genrighty{s}{t}{b}) -\points{b}{s}\right)
\\&+\delta_{\arr{s}{q}\in E}.\sum_{\arr{a}{b}\in E}(\genleft{a}{b}{s}{q})\sbt  (\genright{a}{b}{s}{t}) - \delta_{q,s}\left(\sum_{\arr{a}{b}\in E}(\genlefty{a}{b}{s})\sbt (\genright{a}{b}{s}{t})+ (\genrighty{s}{t}{b}) -\points{b}{s}\right)
\\=& \delta_{\arr{s}{q}\in E}.\sum_{\arr{a}{b}\in E}(\genleft{a}{b}{s}{q})\sbt  (\genright{a}{b}{s}{t})-\delta_{q,s}\sum_{\arr{a}{b}\in E}\sum_{d:\arr{s}{d}\in E}(\genleft{a}{b}{s}{d})\sbt  (\genright{a}{b}{s}{t})
\end{align*}
As with the previous relations, we observe that this relation is equivalent to 
\begin{equation}
\sum_{i:\arr{i}{a}\in E}(\genleft{i}{a}{s}{d})\sbt  (\genright{i}{a}{s}{t})= \delta_{t,d}\points{a}{t}
\end{equation}
holding for arbitrary $a\in V$ and $\arr{s}{t}, \arr{s}{d}\in E$. A similar calculation follows for \eqref{EqSurjHpf2}:
\begin{align*}
\delta_{q,t}f_{s}-\delta_{q,s}f_{s}=&\evr (df_{q}\tn e_{\arl{s}{t}})= \sum_{\arr{a}{b}\in E} [\ov{e_{\arl{a}{b}}},f_{q}]\sbt [ e_{\arl{s}{t}}, -\ov{f_{a}}]
\\=& \sum_{\arr{a}{b}\in E}f_{q}\sbt \ov{e_{\arl{a}{b}}}\sbt e_{\arl{s}{t}}\sbt\ov{f_{a}} - \delta_{q,t}\sum_{\arr{a}{b}\in E} \ov{e_{\arl{a}{b}}}\sbt e_{\arl{s}{t}}\sbt\ov{f_{a}}+ \sum_{\arr{a}{b}\in E} [\ov{e_{\arl{a}{b}}}\sbt \ov{f_{a}} ,f_{q}]\sbt  e_{\arl{s}{t}}
\\=& \sum_{\arr{a}{b}\in E}f_{q}\sbt \ov{e_{\arl{a}{b}}}\sbt e_{\arl{s}{t}}\sbt\ov{f_{a}} - \delta_{q,t}\sum_{\arr{a}{b}\in E} \ov{e_{\arl{a}{b}}}\sbt e_{\arl{s}{t}}\sbt\ov{f_{a}}
\end{align*}
Next we observe the following identity for arbitrary $\arr{a}{b}\in E$:
\begin{align*}
\ov{e_{\arl{a}{b}}}\sbt e_{\arl{s}{t}}\sbt\ov{f_{a}}= & \left(  \sum_{\arr{e}{f}\in E}(\genright{e}{f}{a}{b})+ \sum_{r\in V}(\genrighty{a}{b}{r}) -\sum_{r\in V}\points{r}{a}\right)\sbt \left( -\sum_{d:\arr{a}{d}\in E}(\genleft{s}{t}{a}{d})+(\genlefty{s}{t}{a}) +\points{t}{a}\right) 
\\=&- \sum_{e:\arr{e}{t}\in E}(\genright{e}{t}{a}{b})\sbt (\genleft{s}{t}{a}{b}) - (\genrighty{a}{b}{t})\sbt(\genleft{s}{t}{a}{b})-(\genlefty{s}{t}{a})- \points{t}{a}
\end{align*}
Using this expansion, we can change variables:
\begin{align*}
\delta_{q,t}f_{s}-\delta_{q,s}f_{s}=&-\delta_{\arr{q}{t}\in E}\sum_{\arr{a}{b}\in E}(\genright{q}{t}{a}{b})\sbt (\genleft{s}{t}{a}{b})-\delta_{q,t}\sum_{\arr{a}{b}\in E}\left( (\genrighty{a}{b}{t})\sbt(\genleft{s}{t}{a}{b})+(\genlefty{s}{t}{a})+ \points{t}{a} \right)  
\\&+ \delta_{q,t}\sum_{\arr{a}{b}\in E} \left( \sum_{e:\arr{e}{t}\in E}(\genright{e}{t}{a}{b})\sbt (\genleft{s}{t}{a}{b}) + (\genrighty{a}{b}{t})\sbt(\genleft{s}{t}{a}{b})+(\genlefty{s}{t}{a})+ \points{t}{a}\right) 
\\ =&\delta_{q,t}\sum_{\arr{a}{b}\in E} \left( \sum_{e:\arr{e}{t}\in E}(\genright{e}{t}{a}{b})\sbt (\genleft{s}{t}{a}{b})\right)-\delta_{\arr{q}{t}\in E}\sum_{\arr{a}{b}\in E}(\genright{q}{t}{a}{b})\sbt (\genleft{s}{t}{a}{b})
\end{align*}
Thereby, \eqref{EqSurjHpf2} is equivalent to the following relations holding for arbitrary $a\in V$ and $\arr{s}{t}, \arr{e}{t}\in E$:
\begin{equation*}
 \sum_{i:\arr{a}{i}\in E}(\genright{e}{t}{a}{i})\sbt (\genleft{s}{t}{a}{i})= \delta_{e,s}\points{s}{a}\qedhere
\end{equation*}
\end{proof}
We can now write out the Hopf algebroid structure of $H\XA^{1}$ in terms of the new generators. First let us describe the coalgebra and counit action on $e_{\arl{s}{t}}$ and $\ov{e_{\arl{s}{t}}}$ by Section \ref{SBialSurj}: 
\begin{align*}
\Delta (e_{\arl{s}{t}}) = & e_{\arl{s}{t}}\otimes 1 -  \sum_{\arr{a}{b}\in E}[e_{\arl{s}{t}},\ov{f_{a}}]\otimes e_{\arl{a}{b}}=e_{\arl{s}{t}}\otimes 1 -  \sum_{\arr{a}{b}\in E} e_{\arl{s}{t}}\sbt \ov{f_{a}}\otimes f_{b}\sbt e_{\arl{a}{b}}
\\&+  \sum_{\arr{a}{b}\in E} \ov{f_{a}}\sbt e_{\arl{s}{t}}\otimes f_{b}\sbt e_{\arl{a}{b}} = e_{\arl{s}{t}}\otimes 1 -  \sum_{\arr{a}{b}\in E}\ov{f_{b}}\sbt e_{\arl{s}{t}}\sbt \ov{f_{a}}\otimes  e_{\arl{a}{b}}
\\ \Delta (\ov{e_{\arl{s}{t}}})=& 1\tn \ov{e_{\arl{s}{t}}}+\sum_{\arr{a}{b}\in E}\ov{e_{\arl{a}{b}}}\tn [\ov{e_{\arl{s}{t}}},f_{b} ] =1\tn \ov{e_{\arl{s}{t}}} + \sum_{\arr{a}{b}\in E}\ov{e_{\arl{a}{b}}}\tn f_{a} \sbt \ov{e_{\arl{s}{t}}} \sbt f_{b}  
\end{align*}
for any $\arr{s}{t}\in E$. Since $\Delta ( f_{p}\sbt \ov{f_{q}}) = f_{p}\otimes \ov{f_{q}}$, we can describe the coalgebra and counit action on the alternative generators of $H\XA^{1}$.
\begin{corollary}\label{CDiaBialg} The bialgebroid structure of $H\XA^{1}$, when viewed as a quotient of $\K D^{\mathrm{db}}$, is defined as follows:
\begin{align}
\Delta (\points{p}{q})=\sum_{i\in V}\points{p}{i}\otimes \points{i}{q},& \quad  \Delta\left((\genleft{a}{b}{c}{d})\right)=\sum_{\arr{s}{t}\in E}(\genleft{a}{b}{s}{t})\otimes (\genleft{s}{t}{c}{d}), \quad  \Delta\left((\genright{a}{b}{c}{d})\right)=\sum_{\arr{s}{t}\in E}(\genright{a}{b}{s}{t})\otimes (\genright{s}{t}{c}{d})
\\& \Delta\left((\genlefty{a}{b}{c})\right) = \sum_{\arr{s}{t}\in E}(\genleft{a}{b}{s}{t})\otimes (\genlefty{s}{t}{c}) + \sum_{t\in V}(\genlefty{a}{b}{t})\otimes \points{t}{c}
\\ &\Delta\left((\genrighty{a}{b}{c})\right)=\sum_{\arr{s}{t}\in E}(\genrighty{s}{t}{c})\otimes (\genright{s}{t}{a}{b})  + \sum_{t\in V}\points{c}{t}\otimes (\genrighty{a}{b}{t})
\\ \epsilon (\points{p}{q})= \delta_{p,q}f_{p} ,  \ \ \epsilon &\left((\genleft{a}{b}{c}{d})\right) = \delta_{a,c}.\delta_{b,d}. f_{b} , \ \ \epsilon \left((\genright{a}{b}{c}{d})\right)= \delta_{a,c}.\delta_{b,d}. f_{a} , \ \ \epsilon \left((\genlefty{a}{b}{c})\right)= 0 = \left((\genrighty{a}{b}{c})\right)
\end{align}
\end{corollary}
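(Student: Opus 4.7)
The plan is direct computation: apply multiplicativity of $\Delta$ and $\epsilon$ (equations \eqref{EqDelMul} and \eqref{EqCounMul}) to the definitions of the new generators in terms of the original ones, substitute the coproducts from Section~\ref{SBialSurj}, and simplify using the tensor balance $\ov{a}\,x\otimes y = x\otimes a\,y$ valid in $\Mto{H}\otimes_A \ls{H}$.

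First, $\Delta(\points{p}{q}) = \Delta(f_p\sbt\ov{f_q}) = f_p\otimes \ov{f_q}$ by the rule $\Delta(a\ov{b})=a\otimes\ov{b}$. To see this equals $\sum_i \points{p}{i}\otimes\points{i}{q} = \sum_i f_p\ov{f_i}\otimes f_i\ov{f_q}$, use that source and target images commute to rewrite the left factor as $\ov{f_i}f_p$, apply balance to move $\ov{f_i}$ onto the right factor as $f_i$, and close with $\sum_i f_i=1_A$. For $\Delta((\genleft{a}{b}{c}{d}))$ and $\Delta((\genright{a}{b}{c}{d}))$, apply multiplicativity to the defining products $-\ov{f_d}\,e_{\arl{a}{b}}\,\ov{f_c}$ and $f_a\,\ov{e_{\arr{c}{d}}}\,f_b$ and insert the known coproducts of $e_{\arl{a}{b}}$ and $\ov{e_{\arl{c}{d}}}$. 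The leading ``identity'' term produces a factor $\ov{f_d}\ov{f_c}=\delta_{c,d}\ov{f_c}$ (respectively $f_a f_b=\delta_{a,b}f_a$), which vanishes because the labelling edges have no self-loops; the remaining sum reassembles directly into the claimed formula.

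For $\Delta((\genlefty{a}{b}{q}))$, expand $\Delta\!\left(\ov{f_q}\,e_{\arl{a}{b}}\,\ov{f_q} - f_b\ov{f_q}\right)$ by multiplicativity. The crucial identity is $\ov{f_q}\,e_{\arl{p}{q'}}\,\ov{f_q} = (\genlefty{p}{q'}{q}) + \points{q'}{q}$ read off from the definition. This splits the coproduct contribution of $e_{\arl{a}{b}}$ into the desired leading term $\sum_{\arr{s}{t}}(\genleft{a}{b}{s}{t})\otimes(\genlefty{s}{t}{q})$ plus ``correction'' terms involving $\points{q'}{q}$. Together with the leftover $e_{\arl{a}{b}}\otimes\ov{f_q}$ and $-f_b\otimes\ov{f_q}$ pieces, these corrections must collapse to $\sum_t(\genlefty{a}{b}{t})\otimes\points{t}{q}$. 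This is achieved by invoking the decomposition $e_{\arl{a}{b}} - f_b = \sum_t(\genlefty{a}{b}{t}) - \sum_{\arr{s}{t}}(\genleft{a}{b}{s}{t})$, and then using the balance identity $(\genlefty{a}{b}{t})\otimes\ov{f_q} = (\genlefty{a}{b}{t})\otimes\points{t}{q}$, which follows by expanding $\ov{f_q}=\sum_i f_i\ov{f_q}$ and observing $\ov{f_i}(\genlefty{a}{b}{t}) = \delta_{i,t}(\genlefty{a}{b}{t})$. The case $\Delta((\genrighty{a}{b}{q}))$ is handled by the symmetric argument starting from $f_q\,\ov{e_{\arl{c}{d}}}\,f_q = (\genrighty{c}{d}{q}) - \points{q}{c}$.

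The counit values follow from multiplicativity of $\epsilon$, the identities $\epsilon(h\,s(a)) = \epsilon(h\,t(a)) = \epsilon(h)a$ (which also give $\epsilon(a\ov{b}) = ab$), and $\epsilon(e_{\arl{}{}}) = \epsilon(\ov{e_{\arl{}{}}}) = 0$; the latter immediately forces $\epsilon(\ov{f_q}\,e_{\arl{a}{b}}\,\ov{f_q}) = 0$ and $\epsilon(f_a\,\ov{e_{\arl{c}{d}}}\,f_b) = 0$, after which everything reduces to computing $\epsilon(a\ov{b})$ on the remaining $A^e$-terms. The main obstacle throughout is the bookkeeping in the $(\genlefty{})$ and $(\genrighty{})$ coproducts: correctly tracking the multiple correction terms and showing, via the tensor balance, that they reassemble into the stated sums.
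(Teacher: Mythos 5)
Your coproduct computations are essentially the paper's own proof: expand by multiplicativity, kill the ``identity'' terms via $\ov{f_d}\sbt\ov{f_c}=\delta_{c,d}f_c=0$ (no self-loops), and reassemble the leftover $e_{\arl{a}{b}}\otimes\ov{f_q}$ pieces using the decomposition of $e_{\arl{a}{b}}$ and the balance $\ov{a}\,h\otimes h'=h\otimes a\,h'$. That part is fine, and you in fact cover the $(\genright{a}{b}{c}{d})$ and $(\genrighty{a}{b}{c})$ cases that the paper leaves to the reader.

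The counit part, however, contains a genuine error. You invoke the identity $\epsilon(h\,\source(a))=\epsilon(h)a$ and conclude that $\epsilon(e_{\arl{a}{b}})=0$ ``immediately forces'' $\epsilon(\ov{f_{q}}\sbt e_{\arl{a}{b}}\sbt\ov{f_{q}})=0$ and $\epsilon(f_{a}\sbt\ov{e_{\arl{c}{d}}}\sbt f_{b})=0$. Neither holds. In a bialgebroid $\epsilon$ is an $A$-bimodule map only for the \emph{left}-multiplication actions by $\source$ and $\target$; there is no axiom controlling $\epsilon(h\,\source(a))$ beyond \eqref{EqCounrs} and \eqref{EqCounMul}, and right multiplication of a vector field by $\source(f_q)$ produces Leibniz correction terms with nonzero counit. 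Concretely, using \eqref{EqCounrs} and $e_{\arl{a}{b}}\sbt f_{q}=\delta_{q,a}(e_{\arl{a}{b}}-f_{b})+\delta_{b,q}f_{b}$ one finds
\begin{equation*}
\epsilon\big(\ov{f_{q}}\sbt e_{\arl{a}{b}}\sbt\ov{f_{q}}\big)=\delta_{b,q}f_{b},\qquad
\epsilon\big(f_{a}\sbt\ov{e_{\arl{c}{d}}}\sbt f_{b}\big)=\delta_{a,c}.\delta_{b,d}.f_{a},
\end{equation*}
neither of which is zero. Your version is also internally inconsistent with the statement you are proving: since $(\genright{a}{b}{c}{d})=f_{a}\sbt\ov{e_{\arr{c}{d}}}\sbt f_{b}$ has no residual $A^{e}$-term, your method returns $\epsilon\left((\genright{a}{b}{c}{d})\right)=0$ rather than $\delta_{a,c}.\delta_{b,d}.f_{a}$, and it gives $\epsilon\left((\genlefty{a}{b}{c})\right)=-\delta_{b,c}f_{b}$ rather than $0$. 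The correct route, which is what the paper does, is to first reduce the right-most scalar factor using \eqref{EqCounrs} together with the algebra relations (so that the $\evl(x\otimes da)$ terms appear explicitly), and only then apply $\epsilon(bb')=\epsilon(b\,\epsilon(b'))$; the nonzero counits in the Corollary come precisely from these correction terms.
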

\begin{proof} We will present the relevant calculations for $(\genleft{a}{b}{c}{d})$ and $(\genlefty{a}{b}{c})$ and leave the remaining identities to the reader. 
\begin{align*}
\Delta\left((\genleft{a}{b}{c}{d})\right)=& \Delta\left( - \ov{f_{d}}\sbt e_{\arl{a}{b}} \sbt \ov{f_{c}}\right)=  \sum_{\arr{s}{t}\in E}\ov{f_{t}}\sbt e_{\arl{a}{b}}\sbt \ov{f_{s}}\otimes  \ov{f_{d}}\sbt e_{\arl{s}{t}} \sbt \ov{f_{c}} - e_{\arl{a}{b}}\otimes \ov{f_{d}}\sbt 1\sbt \ov{f_{c}} 
\\=& \sum_{\arr{s}{t}\in E}(\genleft{a}{b}{s}{t})\otimes (\genleft{s}{t}{c}{d})
\\ \Delta\left((\genlefty{a}{b}{c})\right)=& \Delta\left(  \ov{f_{c}}\sbt e_{\arl{a}{b}} \sbt \ov{f_{c}}- f_{b}\sbt \ov{f_{c}}\right)= e_{\arl{a}{b}}\otimes \ov{f_{c}}- \sum_{\arr{s}{t}\in E}\ov{f_{t}}\sbt e_{\arl{a}{b}}\sbt \ov{f_{s}}\otimes  \ov{f_{c}}\sbt e_{\arl{s}{t}} \sbt \ov{f_{c}}  - f_{b}\otimes \ov{f_{c}}
\\= & \sum_{\arr{s}{t}\in E} \ov{f_{t}}\sbt e_{\arl{a}{b}}\sbt \ov{f_{s}} \otimes f_{t}\sbt \ov{f_{c}}+\sum_{t\in V} \ov{f_{t}}\sbt e_{\arl{a}{b}}\sbt \ov{f_{t}} \otimes f_{t}\sbt \ov{f_{c}} - \sum_{t\in V} f_{b}\sbt \ov{f_{t}}\otimes f_{t}\sbt \ov{f_{c}} 
\\& - \sum_{\arr{s}{t}\in E}\ov{f_{t}}\sbt e_{\arl{a}{b}}\sbt \ov{f_{s}}\otimes  \ov{f_{c}}\sbt e_{\arl{s}{t}} \sbt \ov{f_{c}}
\\= & -\sum_{\arr{s}{t}\in E}\ov{f_{t}}\sbt e_{\arl{a}{b}}\sbt \ov{f_{s}}\otimes  (\genlefty{s}{t}{c}) + \sum_{t\in V}(\genlefty{a}{b}{t})\otimes f_{t}\sbt \ov{f_{c}} = \sum_{\arr{s}{t}\in E}(\genleft{a}{b}{s}{t})\otimes (\genlefty{s}{t}{c}) + \sum_{t\in V}(\genlefty{a}{b}{t})\otimes \points{t}{c}
\end{align*}
Similarly, we can describe the action of the counit: 
\begin{align*}
\epsilon\left((\genleft{a}{b}{c}{d})\right)=& \epsilon\left( - \ov{f_{d}}\sbt e_{\arl{a}{b}} \sbt \ov{f_{c}}\right)=-\epsilon\left(  \ov{f_{d}}\sbt e_{\arl{a}{b}} \sbt f_{c}\right)= -\epsilon\left(  \ov{f_{d}}\sbt \epsilon ( e_{\arl{a}{b}} \sbt f_{c})\right)
\\ =& -\epsilon\left( \ov{f_{d}}\sbt \epsilon \big( \delta_{a,c}( e_{\arl{a}{b}}-_{b})+  \delta_{b,c}f_{b}\big)\right)= -\epsilon\left( \delta_{b,c}\ov{f_{d}}\sbt  f_{b} -  \delta_{a,c}\ov{f_{d}}\sbt f_{b}\right)
\\=&\delta_{a,c}.\delta_{b,d}. f_{b}- \delta_{b,c}.\delta_{b,d}  f_{b}= \delta_{a,c}.\delta_{b,d}. f_{b} 
\\\epsilon\left((\genlefty{a}{b}{c})\right)=&\epsilon \left(  \ov{f_{c}}\sbt e_{\arl{a}{b}} \sbt \ov{f_{c}}- f_{b}\sbt \ov{f_{c}}\right)= \epsilon\left( \ov{f_{c}}\sbt \epsilon \big( \delta_{a,c}( e_{\arl{a}{b}}-f_{b})+  \delta_{b,c}f_{b}\big)\right)-\delta_{c,b} f_{b}
\\=&   \delta_{b,c}f_{b}-  \delta_{b,c}\delta_{a,c}f_{b}-\delta_{c,b} f_{b}= 0 \qedhere
\end{align*}
\end{proof}
As observed in Example 4.17 of \cite{ghobadi2020hopf}, not only is $H\XA^{1}$ a Hopf algebroid, but it also admits an antipode. This is a consequence of Theorem 4.13 of \cite{ghobadi2020hopf} which we review at the end of Section \ref{SBialSurj}. In the case of the digraph calculus $\Upsilon (e_{\arl{s}{t}}) = f_{s}-f_{t}$ and the antipode is defined on elements $e_{\arl{s}{t}}$ by $S(e_{\arl{s}{t}})=-\ov{e_{\arl{s}{t}}}-\ov{\Upsilon (e_{\arl{s}{t}}) }= -\ov{e_{\arl{s}{t}}}- \ov{f_{s}} +\ov{ f_{t}}$. We now use our description of the action of $S$ from Section \ref{SBialSurj} and simplify $S (\ov{e_{\arl{s}{t}}})$  first by observing that $b_{i}x_{j}= -\sum_{\arr{a}{b}\in E}f_{a}.e_{\arl{a}{b}}=0 \in \XA^{1}$, and secondly using the expansion of relation \eqref{EqSurjHpf2} in the proof of Theorem \ref{THDig}: 
\begin{align*}
S (\ov{e_{\arl{s}{t}}})& = -\sum_{\arr{a}{b}\in E} \ov{e_{\arl{a}{b}}} \sbt  e_{\arl{s}{t}}\sbt (-\ov{f_{a}})   - \sum_{\arr{a}{b}\in E}\ov{\Upsilon (e_{\arl{a}{b}}) }\sbt  e_{\arl{s}{t}}\sbt (-\ov{f_{a}}) 
\\&= \sum_{\arr{a}{b}\in E} f_{t}\sbt \ov{e_{\arl{a}{b}}} \sbt  e_{\arl{s}{t}}\sbt \ov{f_{a}}- f_{s}  + \sum_{\arr{a}{b}\in E}\ov{f_{a} }\sbt  e_{\arl{s}{t}}\sbt \ov{f_{a}}- \sum_{\arr{a}{b}\in E}\ov{f_{b} }\sbt  e_{\arl{s}{t}}\sbt \ov{f_{a}}
\end{align*}
Hence, we can describe the action of the antipode on the alternative set of generators: 
\begin{corollary}\label{CDigAnti} The Hopf algebroid $H\XA^{1}$ admits an invertible antipode and when viewed as a quotient of $\K D^{\mathrm{db}}$, the action of the antipode is defined by
\begin{align}
S(\points{p}{q})=\points{q}{p}, \quad S\left((\genleft{a}{b}{c}{d})\right)= (\genright{c}{d}{a}{b}), \quad S\left((\genright{a}{b}{c}{d})\right)= (\genleft{c}{d}{a}{b})
\\ S\left((\genlefty{a}{b}{q})\right)=- (\genrighty{a}{b}{q}), \quad S\left((\genrighty{c}{d}{p})\right)= -\sum_{t: \arr{p}{t} \in E}(\genrighty{p}{t}{d})\sbt (\genleft{c}{d}{p}{t})
\end{align}
for arbitrary points $p,q\in V$ and edges $\arr{a}{b}, \arr{c}{d}\in E$. 
\end{corollary}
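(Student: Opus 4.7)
The plan is to verify each formula by a direct anti-multiplicative computation, using the primitive values $S(f_p)=\ov{f_p}$, $S(\ov{f_p})=f_p$, $S(e_{\arl{s}{t}})=-\ov{e_{\arl{s}{t}}}+\ov{f_t}-\ov{f_s}$, and the explicit expression for $S(\ov{e_{\arl{s}{t}}})$ derived immediately before the statement. Invertibility of $S$ itself is already guaranteed by Theorem 4.13 of \cite{ghobadi2020hopf}, applicable because the linear map $\Upsilon(e_{\arl{s}{t}})=f_s-f_t$ satisfies $\Upsilon(xa)=\Upsilon(x)a+\evl(x\otimes da)$ and $\Upsilon(ax)=a\Upsilon(x)+\evr(da\otimes x)$ on the spanning elements, as one checks directly from the explicit formulas for the evaluation maps.

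The cases $S(\points{p}{q})$, $S((\genleft{a}{b}{c}{d}))$, $S((\genright{a}{b}{c}{d}))$ and $S((\genlefty{a}{b}{c}))$ reduce to short unfoldings. For instance, expanding gives $S((\genleft{a}{b}{c}{d}))=f_c\sbt\ov{e_{\arl{a}{b}}}\sbt f_d-f_cf_d\sbt\ov{f_b}+f_cf_d\sbt\ov{f_a}$, and the last two terms vanish since $\arr{c}{d}\in E$ forces $c\neq d$; for $(\genright{a}{b}{c}{d})$ one contracts the formula for $S(\ov{e_{\arl{c}{d}}})$ by $\ov{f_b}$ on the left and $\ov{f_a}$ on the right, and the collapse $\ov{f_x}\sbt\ov{f_y}=\delta_{x,y}\ov{f_x}$ together with $a\neq b$ annihilates every summand except $-\ov{f_b}\sbt e_{\arl{c}{d}}\sbt\ov{f_a}=(\genleft{c}{d}{a}{b})$; and in $(\genlefty{a}{b}{c})$ the stray $\ov{f_b}$-term produced by $S(e_{\arl{a}{b}})$ is exactly cancelled by the $-S(f_b\sbt\ov{f_c})$ contribution coming from the $-f_b\sbt\ov{f_c}$ summand in the definition of $(\genlefty{a}{b}{c})$, leaving $-(\genrighty{a}{b}{c})$.

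The technical step is $(\genrighty{c}{d}{p})$. Unfolding $S((\genrighty{c}{d}{p}))=\ov{f_p}\sbt S(\ov{e_{\arl{c}{d}}})\sbt\ov{f_p}+f_c\sbt\ov{f_p}$ with the displayed formula for $S(\ov{e_{\arl{c}{d}}})$, the $\ov{f_b}\sbt e_{\arl{c}{d}}\sbt\ov{f_a}$ summand vanishes because it would simultaneously force $a=p$ and $b=p$ against $a\neq b$, and the boundary $f_c\sbt\ov{f_p}$ cancels the scalar $-f_c\sbt\ov{f_p}$ produced by the $-f_c$ term in $S(\ov{e_{\arl{c}{d}}})$. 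After restricting the remaining sums to edges $\arr{p}{t}\in E$ via $\ov{f_a}\sbt\ov{f_p}=\delta_{a,p}\ov{f_p}$, one is left with $f_d\sbt\sum_{t:\arr{p}{t}\in E}\ov{e_{\arl{p}{t}}}\sbt e_{\arl{c}{d}}\sbt\ov{f_p}+\sum_{t:\arr{p}{t}\in E}\ov{f_p}\sbt e_{\arl{c}{d}}\sbt\ov{f_p}$. This matches the expansion of $-\sum_{t:\arr{p}{t}\in E}(\genrighty{p}{t}{d})\sbt(\genleft{c}{d}{p}{t})$ term by term, once one applies the identity $\ov{e_{\arl{p}{t}}}\sbt\ov{f_t}=\ov{e_{\arl{p}{t}}}+\ov{f_p}$ (coming from the defining relations), together with $f_d\sbt e_{\arl{c}{d}}=e_{\arl{c}{d}}$ and the fact that $f_d$ commutes with every $\ov{f_r}$.

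The main obstacle is precisely this last matching: one must ensure the out-degree multiplicities on the two sides agree and correctly exploit the commutation $\ov{f_p}\sbt e_{\arl{c}{d}}=f_d\sbt\ov{f_p}\sbt e_{\arl{c}{d}}$ in order to identify the residual $\ov{f_p}\sbt e_{\arl{c}{d}}\sbt\ov{f_p}$ contribution with the corresponding summand in the expanded product. All other bookkeeping mirrors steps already carried out inside the proof of Theorem \ref{THDig}, so that proof supplies the template to be followed.
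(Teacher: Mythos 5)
Your proposal is correct and takes essentially the same route as the paper: anti-multiplicativity applied to the defining expressions of the new generators, using the primitive values $S(f_p)=\ov{f_p}$, $S(\ov{f_p})=f_p$, $S(e_{\arl{s}{t}})=-\ov{e_{\arl{s}{t}}}-\ov{f_s}+\ov{f_t}$ and the precomputed expansion of $S(\ov{e_{\arl{s}{t}}})$, with the same orthogonality cancellations driven by $c\neq d$ and $a\neq b$. The only cosmetic difference is in the last case, where you expand the claimed answer $-\sum_{t}(\genrighty{p}{t}{d})\sbt(\genleft{c}{d}{p}{t})$ back into the old generators and match, while the paper pushes the residual expression forward into the new generators; these are mirror-image bookkeeping of the same computation.
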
 
\begin{proof} We will first present the relevant calculations for $(\genleft{a}{b}{c}{d})$ and $(\genlefty{a}{b}{c})$:
\begin{align*}
S\left((\genleft{a}{b}{c}{d})\right)=& S\left( - \ov{f_{d}}\sbt e_{\arl{a}{b}} \sbt \ov{f_{c}}\right)=- S\left( \ov{f_{c}}\right) \sbt S\left( e_{\arl{a}{b}}\right) \sbt S\left( \ov{f_{d}}\right) = f_{c}\sbt\left(  \ov{e_{\arl{a}{b}}} +\ov{f_{a}} -\ov{ f_{b}}\right)\sbt f_{d}
\\=&f_{c}\sbt \ov{e_{\arl{a}{b}}}\sbt f_{d}+\delta_{c,d}( \ov{f_{a}} -\ov{ f_{b}}) \sbt f_{c} = f_{c}\sbt \ov{e_{\arl{a}{b}}}\sbt f_{d} = (\genright{c}{d}{a}{b})
\\S\left((\genlefty{a}{b}{c})\right)=& S\left(  \ov{f_{c}}\sbt e_{\arl{a}{b}} \sbt \ov{f_{c}}- f_{b}\sbt \ov{f_{c}}\right)=S\left( \ov{f_{c}}\right) \sbt S\left( e_{\arl{a}{b}}\right) \sbt S\left( \ov{f_{c}}\right)- S\left( \ov{f_{c}}\right)\sbt S\left( f_{b}\right)
\\=& -f_{c}\sbt\left(  \ov{e_{\arl{a}{b}}} +\ov{f_{a}} -\ov{ f_{b}}\right)\sbt f_{c} -\ov{ f_{b}}\sbt f_{c} = -f_{c}\sbt \ov{e_{\arl{a}{b}}}\sbt f_{c}  -\ov{f_{a}}\sbt f_{c} =  -(\genrighty{a}{b}{c})
\end{align*}
For the other types of generators, we use our expansion of $S (\ov{e_{\arl{s}{t}}})$, described before the corollary: 
\begin{align*}
S\left((\genright{a}{b}{c}{d})\right)=& S\left( f_{a}\sbt \ov{e_{\arl{c}{d}}} \sbt f_{b}\right)= S\left( f_{b}\right) \sbt S\left(\ov{e_{\arl{c}{d}}}\right) \sbt S\left( f_{a}\right) = 
\\=&\ov{f_{b}}\sbt \left( \sum_{\arr{s}{t}\in E} f_{d}\sbt \ov{e_{\arl{s}{t}}} \sbt  e_{\arl{c}{d}}\sbt \ov{f_{s}}- f_{c}  + \sum_{\arr{s}{t}\in E}\ov{f_{s} }\sbt  e_{\arl{c}{d}}\sbt \ov{f_{s}}- \sum_{\arr{s}{t}\in E}\ov{f_{t} }\sbt  e_{\arl{c}{d}}\sbt \ov{f_{s}}\right)\sbt \ov{f_{a}}
\\=&\delta_{a,b} \left( \sum_{t:\arr{b}{t}\in E} f_{d}\sbt \ov{e_{\arl{b}{t}}} \sbt  e_{\arl{c}{d}}\sbt \ov{f_{a}}- f_{c}\sbt \ov{f_{a}}  + \ov{f_{a} }\sbt  e_{\arl{c}{d}}\sbt \ov{f_{a}}\right) - \ov{f_{b} }\sbt  e_{\arl{c}{d}}\sbt \ov{f_{a}} = (\genleft{c}{d}{a}{b})
\\S\left((\genrighty{c}{d}{a})\right)=&  S\left( f_{a}\sbt \ov{e_{\arl{c}{d}}} \sbt f_{a} +\ov{f_{c}}\sbt f_{a}\right)= S\left( f_{b}\right) \sbt S\left(\ov{e_{\arl{c}{d}}}\right) \sbt S\left( f_{a}\right) + S\left( f_{a}\right) \sbt S\left( \ov{f_{c}}\right) 
\\=&\ov{f_{a}}\sbt \left( \sum_{\arr{s}{t}\in E} f_{d}\sbt \ov{e_{\arl{s}{t}}} \sbt  e_{\arl{c}{d}}\sbt \ov{f_{s}}- f_{c}  + \sum_{\arr{s}{t}\in E}\ov{f_{s} }\sbt  e_{\arl{c}{d}}\sbt \ov{f_{s}}- \sum_{\arr{s}{t}\in E}\ov{f_{t} }\sbt  e_{\arl{c}{d}}\sbt \ov{f_{s}} \right)\sbt \ov{f_{a}}+ \ov{f_{a}}\sbt f_{c}
\\=& \sum_{t: \arr{a}{t}\in E} f_{d}\sbt \ov{e_{\arl{a}{t}}} \sbt  e_{\arl{c}{d}}\sbt \ov{f_{a}} + \sum_{t:\arr{a}{t}\in E}\ov{f_{a} }\sbt  e_{\arl{c}{d}}\sbt \ov{f_{a}} = \sum_{t: \arr{a}{t}\in E} ( f_{d}\sbt \ov{e_{\arl{a}{t}}}\sbt f_{d} + \ov{f_{a} }\sbt f_{d}) \sbt e_{\arl{c}{d}}\sbt \ov{f_{a}}
\\& =\sum_{t: \arr{a}{t} \in E}(\genrighty{a}{t}{d})\sbt \left ( (\genlefty{c}{d}{a})-\sum_{f:\arr{a}{f}\in E}(\genleft{c}{d}{a}{f}) +\points{d}{a} \right)= -\sum_{t: \arr{a}{t} \in E}(\genrighty{a}{t}{d})\sbt (\genleft{c}{d}{a}{t})\qedhere
\end{align*}
\end{proof}
While the antipode appears to be involutive on generators of the form $(\genright{a}{b}{c}{d})$ and $(\genleft{a}{b}{c}{d})$, it is easy to see that $S^{-1}\neq S$ since 
\begin{equation}
S^{-1}  \left( (\genrighty{a}{b}{c})\right) = -(\genlefty{a}{b}{c}),\quad S^{-1}\left((\genlefty{a}{b}{c}) \right) = -\sum_{s: \arr{s}{c} \in E}(\genlefty{s}{c}{a})\sbt (\genright{s}{c}{a}{b})
\end{equation} 
We have omitted the calculations for the action $S^{-1}$ since we will not be using them. 
\subsection{Flat Connections on Digraph}\label{SMaxProlong} 
In this section we construct $\ct{D}\XA$ for the digraph calculus with a specific choice of $\Omega^{2}$. Throughout this section $\arrtwo{a}{b}{c}$ will denote a path of length two in the digraph i.e. two successive arrows $\arr{a}{b},\arr{b}{c} \in E$ and $E_{2}$ will denote the set of all paths of length two in the digraph. 

In order to discuss flat connections over the digraph calculus, we must introduce the space of 2-forms extending $(A,d,\Omega^{1})$. Noting that $\Omega^{1}\otimes \Omega^{1}$ is spanned by elements $\omega_{\arr{a}{b}}\tn \omega_{\arr{b}{c}}$ corresponding to paths of length two, we defined $\Omega^{2}_{\mathrm{top}}$ as the quotient of $\Omega^{1}\otimes \Omega^{1}$ by the submodule 
\begin{equation}\label{EqMID} 
\ct{M}_{\mathrm{top}} =\left\{ \sum_{b:\arrtwo{a}{b}{c}\in E_{2}} \omega_{\arr{a}{b}}\otimes \omega_{\arr{b}{c}} \middle\vert a,c\in V ,\ a\nrightarrow c \right\}
\end{equation}
with the differential $d:\Omega^{1}\rightarrow \Omega^{2}_{\mathrm{top}}$ defined by 
\begin{equation}\label{EqDiffDigraph}
d( \omega_{\arr{a}{b}})=\sum_{t:\arr{b}{t}\in E}\omega_{\arr{a}{b}}\wedge \omega_{\arr{b}{t}} + \sum_{s:\arr{s}{a}\in E} \omega_{\arr{s}{a}}\wedge \omega_{\arr{a}{b}} - \sum_{i:\arrtwo{a}{i}{b}\in E_{2}} \omega_{\arr{a}{i}}\wedge \omega_{\arr{i}{b}}
\end{equation}
where $\wedge : \Omega^{1}\otimes \Omega^{1} \twoheadrightarrow \Omega^{2}$ denotes the natural projection. 

\begin{lemma} The differential described above extends the first-order calculus of $(A,\Omega^{1})$. 
\end{lemma}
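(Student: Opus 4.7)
The plan is to verify the three standard conditions for $d:\Omega^{1}\to\Omega^{2}_{\mathrm{top}}$ to extend $(A,d,\Omega^{1})$ to a DGA truncated in degree two: (i) $\K$-linearity, which is immediate since $d$ is prescribed on the $\K$-basis $\{\omega_{\arr{a}{b}}\}_{\arr{a}{b}\in E}$ of $\Omega^{1}$; (ii) the graded Leibniz rules $d(f\omega)=df\wedge\omega+f\cdot d\omega$ and $d(\omega f)=d\omega\cdot f-\omega\wedge df$ for $f\in A$, $\omega\in\Omega^{1}$; and (iii) $d^{2}=0$ on $A$. By $\K$-linearity it suffices to verify (ii) and (iii) on the idempotent basis $\{f_{p}\}_{p\in V}$ of $A$ together with the edge basis of $\Omega^{1}$.

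For (ii), since $f_{p}\omega_{\arr{a}{b}}=\delta_{p,a}\omega_{\arr{a}{b}}$ and $\omega_{\arr{a}{b}}f_{p}=\delta_{p,b}\omega_{\arr{a}{b}}$, each identity reduces to a term-by-term comparison of two expansions. The only input required is that a wedge $\omega_{\arr{s}{t}}\wedge\omega_{\arr{u}{v}}$ vanishes in $\Omega^{1}\otimes_{A}\Omega^{1}$ unless $t=u$, which allows the various restricted sums on the two sides to be matched directly; no use of the relations $\ct{M}_{\mathrm{top}}$ is needed, so the Leibniz identities hold already in $\Omega^{1}\otimes_{A}\Omega^{1}$ and descend to $\Omega^{2}_{\mathrm{top}}$.

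The heart of the argument is the verification of $d(df_{p})=0$ in $\Omega^{2}_{\mathrm{top}}$. Expanding $df_{p}=\sum_{\arr{a}{p}\in E}\omega_{\arr{a}{p}}-\sum_{\arr{p}{b}\in E}\omega_{\arr{p}{b}}$ and applying \eqref{EqDiffDigraph} produces six double sums indexed by paths of length two. The two sums iterating over paths of length two passing through $p$ cancel directly by a reindexing. The remaining four split naturally into two pairs: in the first, a sum over all paths of length two ending at $p$ is compared with the same sum restricted to triangles at $p$; in the second, the symmetric situation for paths beginning at $p$. The difference within each pair is a sum of expressions of the form $\sum_{a:\arrtwo{s}{a}{p}\in E_{2}}\omega_{\arr{s}{a}}\wedge\omega_{\arr{a}{p}}$ indexed by vertex pairs $(s,p)$ with $s\nrightarrow p$; by the very definition of $\ct{M}_{\mathrm{top}}$ in \eqref{EqMID} each such inner sum vanishes in $\Omega^{2}_{\mathrm{top}}$. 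Hence both pairs collapse and $d(df_{p})=0$.

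The main obstacle is the combinatorial bookkeeping in the last step: one must correctly identify the ``excess'' terms that appear after the obvious cancellation as generators of $\ct{M}_{\mathrm{top}}$, so that the defining relation of $\Omega^{2}_{\mathrm{top}}$ is precisely what is needed. Beyond this indexing no further ideas are required.
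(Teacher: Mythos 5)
Your proposal is correct and follows essentially the same route as the paper: the Leibniz identities are checked term-by-term on the bases $\{f_p\}$ and $\{\omega_{\arr{a}{b}}\}$ without invoking the quotient, and $d(df_p)=0$ is obtained by cancelling the two sums over paths through $p$ and recognising the four remaining sums as pairing up into generators $\sum_{b:\arrtwo{a}{b}{c}\in E_2}\omega_{\arr{a}{b}}\wedge\omega_{\arr{b}{c}}$ of $\ct{M}_{\mathrm{top}}$ indexed by pairs with $a\nrightarrow c$. Your additional remark that the Leibniz rules already hold in $\Omega^1\otimes_A\Omega^1$ is accurate but is only a minor sharpening of the paper's computation, not a different argument.
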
 
\begin{proof} First we show that $d^{2}=0$ and $d$ is in fact a differential. 
\begin{align*}
d (d f_{p}) = & d\left(\sum_{s:\arr{s}{p}\in E} \omega_{\arr{s}{p}} - \sum_{t:\arr{p}{t}\in E} \omega_{\arr{p}{t}}\right)  
\\= & \sum_{s,t:\arrtwo{s}{p}{t}\in E_{2}}\omega_{\arr{s}{p}}\wedge \omega_{\arr{p}{t}} + \sum_{s,c:\arrtwo{c}{s}{p}\in E_{2}} \omega_{\arr{c}{s}}\wedge \omega_{\arr{s}{p}} - \sum_{\substack{s,i:\arrtwo{s}{i}{p}\in E_{2}\\\text{where}\ \arr{s}{p}\in E}} \omega_{\arr{s}{i}}\wedge \omega_{\arr{i}{p}} 
\\ & -\sum_{t,d:\arrtwo{p}{t}{d}\in E_{2}}\omega_{\arr{p}{t}}\wedge \omega_{\arr{t}{d}} -\sum_{s,t:\arrtwo{s}{p}{t}\in E_{2}} \omega_{\arr{s}{p}}\wedge \omega_{\arr{p}{t}} + \sum_{\substack{t,i:\arrtwo{p}{i}{t}\in E_{2}\\\text{where}\ \arr{p}{t}\in E}} \omega_{\arr{p}{i}}\wedge \omega_{\arr{i}{t}} 
\\=&\sum_{\substack{s,c:\arrtwo{c}{s}{p}\in E_{2}\\ \text{where}\ c\nrightarrow p}} \omega_{\arr{c}{s}}\wedge \omega_{\arr{s}{p}} -\sum_{\substack{t,d:\arrtwo{p}{t}{d}\in E_{2}\\\text{where}\ p\nrightarrow d}}\omega_{\arr{p}{t}}\wedge \omega_{\arr{t}{d}}= 0 
\end{align*}
Now we show that $d$ is compatible with the $A$-bimodule structure of $\Omega^{1}$. For $p\in V$ and $\arr{a}{b}\in E$ we observe that 
\begin{align*}
d( f_{p}. \omega_{\arr{a}{b}}) &= \delta_{a,p}.\left( \sum_{t:\arr{b}{t}\in E}\omega_{\arr{a}{b}}\wedge \omega_{\arr{b}{t}} + \sum_{s:\arr{s}{a}\in E} \omega_{\arr{s}{a}}\wedge \omega_{\arr{a}{b}} - \sum_{i:\arrtwo{a}{i}{b}\in E_{2}} \omega_{\arr{a}{i}}\wedge \omega_{\arr{i}{b}}\right) 
\\ df_{p}\wedge \omega_{\arr{a}{b}} &= \delta_{a,p}\sum_{s:\arr{s}{p}\in E} \omega_{s\rightarrow p}\wedge\omega_{\arr{a}{b}}  - \delta_{\arr{p}{a}}. \omega_{\arr{p}{a}}\wedge \omega_{\arr{a}{b}}
\\ f_{p}.d(\omega_{\arr{a}{b}})&= \delta_{a,p}\left( \sum_{t:\arr{b}{t}\in E}\omega_{\arr{a}{b}}\wedge \omega_{\arr{b}{t}}- \sum_{i:\arrtwo{a}{i}{b}\in E_{2}} \omega_{\arr{a}{i}}\wedge \omega_{\arr{i}{b}}\right)  + \delta_{\arr{p}{a}} .\omega_{\arr{p}{a}}\wedge \omega_{\arr{a}{b}} 
\end{align*}
Hence, $d( f_{p}. \omega_{\arr{a}{b}})= df_{p} \wedge \omega_{\arr{a}{b}} + f_{p}. d(\omega_{\arr{a}{b}})$ as required. We leave the verification of $d( \omega_{\arr{a}{b}}.f_{p})=  d(\omega_{\arr{a}{b}}).f_{p}- \omega_{\arr{a}{b}}\wedge df_{p} $ to the reader. 
\end{proof}
\begin{rmk}\label{ROmega2s}\rm There are several choices for $\Omega^{2}$ when extending the digraph differential calculus. The maximal prolongation described in Proposition 1.40 of \cite{beggs2020quantum}, sets terms $ \ct{Q}(a,c):= \sum_{b:\arrtwo{a}{b}{c}\in E_{2}} \omega_{\arr{a}{b}}\otimes \omega_{\arr{b}{c}}$ to zero where $a\neq c$ and $a\nrightarrow c$. One obtains our choice of $\Omega^{2}_{\mathrm{top}}$ by further quotienting out terms $\ct{Q}(a,c)$ for the case of $a=c$. There are two other choices of $\Omega^{2}$ which will keep the calculus inner, see Definition 1.3 \cite{beggs2014noncommutative}, where one further quotients out $\ct{Q}(a,c)$ for $\arr{a}{c}$, or for both cases of $a=c$ and $\arr{a}{c}$. The latter, which we denote by $\Omega^{2}_{\mathrm{min}}$, is the choice made to construct $\ct{D}\XA$ for a quiver in Example 5.7 in \cite{ghobadi2020hopf}. Another important choice of $\Omega^{2}$ is for Cayley graphs defined by a suitable subset of a group. In this case the calculus corresponds to a bicovariant calculus over a Hopf algebra and there's a natural choice for $\Omega^{2}$ by Woronowicz's work \cite{woronowicz1989differential}, which will be a quotient of $\Omega^{2}_{\mathrm{min}}$.
\end{rmk} 
Before constructing $\ct{D}\XA$, we must discuss $\XA^{2}$ and its duality morphisms. Notice that $\XA^{2}$ is the quotient of $\XA^{1}\otimes \XA^{1}$ by the corresponding submodule spanned by terms $\sum_{b:\arrtwo{a}{b}{c}\in E_{2}} e_{\arl{b}{c}}\tn e_{\arl{a}{b}}$ where $a,c\in V$ and $ a\nrightarrow c $. We denote the image of $e_{\arl{b}{c}}\tn e_{\arl{a}{b}}$ in $\XA^{2}$ by $e_{ \arltwo{a}{b}{c}}$. In order to define duality morphisms for $\XA^{2}$ and $\Omega^{2}$, we must choose a distinguished vertex $\dist{a}{c}\in \{b\in V\mid \arrtwo{a}{b}{c}\in E_{2} \} $, for every pair of vertices $a,c\in V$ where $ a\nrightarrow c $. Having made such a choice, we denote $E_{2}\setminus \{\arrtwo{a}{\dist{a}{c}}{c}\mid a,c\in V, \ a\nrightarrow c\}$ by $\mathbf{E}_{2}$ and observe that the subsets $\{\omega_{\arr{a}{b}}\wedge \omega_{\arr{b}{c}}\mid \arrtwo{a}{b}{c}\in \mathbf{E}_{2}\}$ and $\{ e_{\arltwo{a}{b}{c}}\mid \arrtwo{a}{b}{c}\in \mathbf{E}_{2}\}$ each form a basis for $\Omega^{2}$ and $\XA^{2}$, respectively. With this choice of basis we have that $\omega_{\arr{a}{\dist{a}{c}}}\wedge \omega_{\arr{\dist{a}{c}}{c}} = -\sum_{b: \arrtwo{a}{b}{c}\in \mathbf{E}_{2}}\omega_{\arr{a}{b}}\wedge \omega_{\arr{b}{c}}$ and $e_{\arltwo{a}{\dist{a}{c}}{c}} = -\sum_{b: \arrtwo{a}{b}{c}\in \mathbf{E}_{2}}e_{\arltwo{a}{b}{c}}$. Hence, we can define a quadruple of duality morphisms: 
\begin{align*}
\cvl^{2}(1)&= \sum _{\arrtwo{a}{b}{c}\in \mathbf{E}_{2}} \omega_{\arr{a}{b}}\wedge \omega_{\arr{b}{c}}\otimes e_{\arltwo{a}{b}{c}},\quad \evl^{2}(e_{\arltwo{x}{y}{z}}\otimes \omega_{\arr{p}{q}}\wedge \omega_{\arr{q}{r}}) = \delta_{x,p}.\delta_{y,q}.\delta_{z,r} f_{z}
\\\cvr^{2}(1)&= \sum _{\arrtwo{a}{b}{c}\in \mathbf{E}_{2}} e_{\arltwo{a}{b}{c}}\otimes \omega_{\arr{a}{b}}\wedge \omega_{\arr{b}{c}},\quad \evr^{2}(\omega_{\arr{p}{q}}\wedge \omega_{\arr{q}{r}}\otimes e_{\arltwo{x}{y}{z}}) = \delta_{p,x}.\delta_{q,y}.\delta_{r,z} f_{x}
\end{align*}
where $\arrtwo{p}{q}{r}, \arrtwo{x}{y}{z}\in \mathbf{E}_{2}$. Hence, we can easily check that the differential structure is pivotal by writing out equation \eqref{EqPivWedge} for a basis element $e_{\arltwo{p}{q}{r}}\in \XA^{2}$: 
\begin{align*}
\sum_{\arr{a}{b},\arr{c}{d}\in E} &\evl^{2} (e_{\arltwo{p}{q}{r}} \tn \omega_{\arr{a}{b}}\wedge \omega_{\arr{c}{d}}) e_{\arl{c}{d}}\otimes e_{\arl{a}{b}} = (\delta_{b,q}- \delta_{p\nrightarrow r}. \delta_{\dist{p}{r},b}) f_{r}.  e_{\arl{b}{r}}\otimes e_{\arl{p}{b}}
\\ = &(\delta_{b,q}- \delta_{p\nrightarrow r}. \delta_{\dist{p}{r},b})  e_{\arl{b}{r}}\otimes e_{\arl{p}{b}} =   e_{\arl{b}{r}}\otimes e_{\arl{p}{b}}. f_{p}(\delta_{b,q}- \delta_{p\nrightarrow r}. \delta_{\dist{p}{r},b})
\\ =& \sum_{\arr{a}{b},\arr{c}{d}\in E} e_{\arl{c}{d}}\otimes e_{\arl{a}{b}} \evr^{2}(\omega_{\arr{a}{b}}\wedge \omega_{\arr{c}{d}} \otimes e_{\arltwo{p}{q}{r}} ) 
\end{align*}
Since $\wedge: \Omega^{1}\tn \Omega^{1}\rightarrow \Omega^{2}$ is surjective, we can construct $\ct{D}\XA$ as a quotient of $H\XA^{1}$ by Corollary \ref{CDXA}. Before applying this result, we will introduce some additional notation. We will say a triple of vertices $(p,q,r)$ form a \emph{triangle} in the digraph if $\arr{p}{q}\in E$ and $\arrtwo{p}{q}{r}\in E_{2}$, and a quadruple of vertices $(p,q,q',r)$ form a \emph{square} in the digraph if $\arrtwo{p}{q}{r}\in E_{2}$ and $\arrtwo{p}{q'}{r}\in E_{2}$. Let us adapt the following notation
\begin{align*} \mathbf{P}(p,q,r;a,b):= &\sum_{i:\arrtwo{a}{i}{b}\in E_{2}}(\genleft{q}{r}{i}{b})\sbt (\genleft{p}{q}{a}{i})- \delta_{\arr{a}{b}}.\left( (\genlefty{q}{r}{b})\sbt (\genleft{p}{q}{a}{b})+ (\genleft{q}{r}{a}{b})\sbt (\genlefty{p}{q}{a})\right)\in H\XA
\\ \mathbf{Q}(p,q,r;a,b):=& \sum_{i:\arrtwo{a}{i}{b}\in E_{2}} (\genright{a}{i}{p}{q})\sbt (\genright{i}{b}{q}{r})+ \delta_{\arr{a}{b}}.\left(  (\genright{a}{b}{p}{q})\sbt (\genrighty{q}{r}{b})+  (\genrighty{p}{q}{a})\sbt (\genright{a}{b}{q}{r})\right)\in H\XA
\end{align*} 
for every pair of vertices $a,b\in V$ and path $\arrtwo{p}{q}{r}\in E$.
\begin{thm}\label{TDXDigraph} The Hopf algebroid $\ct{D}\XA$ for the differential calculus of a digraph $D=(V,E)$ with the choice of $\Omega^{2}_{\mathrm{top}}$, is the quotient of $\K D^{\mathrm{db}}$ by the relations observed in Theorem \ref{THDig} and the additional relations: 
\begin{itemize}
\item For any square $(p,q,q',r)$ in the digraph, we impose the following relations 
\begin{align}
(\genlefty{q}{r}{c})\sbt (\genlefty{p}{q}{c})&=(\genlefty{q'}{r}{c})\sbt (\genlefty{p}{q'}{c}) \label{EqSqr1}
\\  (\genrighty{p}{q}{c})\sbt (\genrighty{q}{r}{c})&= (\genrighty{p}{q'}{c}) \sbt (\genrighty{q'}{r}{c})\label{EqSqr1'}
\\ \mathbf{P}(p,q,r;a,b)&= \mathbf{P}(p,q',r;a,b)\label{EqSqr2}
\\ \mathbf{Q}(p,q,r;a,b)&= \mathbf{Q}(p,q',r;a,b)\label{EqSqr2'}
\end{align}
for arbitrary vertices $a,b,c\in V$
\item For any triangle $(p,q,r)$ in the digraph, we impose the following relations 
\begin{align}
(\genlefty{q}{r}{c})\sbt (\genlefty{p}{q}{c})&= - (\genlefty{p}{r}{c}) \label{EqTri1}
\\(\genrighty{p}{q}{c})\sbt (\genrighty{q}{r}{c})&=  (\genrighty{p}{r}{c}) \label{EqTri1'}
\\\mathbf{P}(p,q,r;a,b)&= \delta_{\arr{a}{b}}(\genleft{p}{r}{a}{b}) \label{EqTri2}
\\\mathbf{Q}(p,q,r;a,b)&= \delta_{\arr{a}{b}}(\genright{a}{b}{p}{r}) \label{EqTri2'}
\end{align}
for arbitrary vertices $a,b,c\in V$
\end{itemize}
\end{thm}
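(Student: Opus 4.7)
By Corollary \ref{CDXA}, $\ct{D}\XA$ is the quotient of $H\XA^1$ by relations \eqref{EqFlat}, \eqref{EqSujExt1}, \eqref{EqSujExt2}. Since Theorem \ref{THDig} already expresses $H\XA^1$ as a quotient of the path algebra $\K D^{\mathrm{db}}$, my task reduces to translating each of these three families into the alternative generators and identifying the result with the eight listed triangle/square relations. I would test \eqref{EqFlat} on the basis $\{e_{\arltwo{p}{q}{r}}:\arrtwo{p}{q}{r}\in \mathbf{E}_2\}$ of $\XA^2$, and test \eqref{EqSujExt1}, \eqref{EqSujExt2} on the natural spanning set of $\ker(\wedge)\subset \Omega^1\tn_A\Omega^1$ given by the ``missing-arrow'' sums $\sum_{b:\arrtwo{a}{b}{c}\in E_2}\omega_{\arr{a}{b}}\tn\omega_{\arr{b}{c}}$ for $\arr{a}{c}\notin E$, which uniformly covers the cases $a\neq c$ and $a=c$ since the digraph has no self-loops.

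For \eqref{EqFlat}, I would use $\cvl(1)=\sum_{\arr{a}{b}\in E}(-df_a)\tn e_{\arl{a}{b}}$ and the explicit formula \eqref{EqDiffDigraph} for $d\omega_{\arr{u}{v}}$, carefully splitting each sum $\sum_{\arrtwo{a}{b}{c}\in E_2}$ into its $\mathbf{E}_2$ part and the corrections arising from the distinguished paths $\arrtwo{a}{\dist{a}{c}}{c}$. A direct calculation would present \eqref{EqFlat} at $x^2=e_{\arltwo{p}{q}{r}}$ as the single identity
\begin{equation*}
f_r\sbt e_{\arl{q}{r}}\sbt e_{\arl{p}{q}} - \delta_{\arr{p}{r}\notin E}\, f_r\sbt e_{\arl{\dist{p}{r}}{r}}\sbt e_{\arl{p}{\dist{p}{r}}} = f_r\sbt\bigl(e_{\arl{p}{q}}+e_{\arl{q}{r}} - \delta_{\arr{p}{r}\in E}\,e_{\arl{p}{r}}\bigr)
\end{equation*}
in $H\XA^1$. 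Expanding each $e_{\arl{s}{t}}$ via $\sum_q(\genlefty{s}{t}{q})-\sum_{\arr{a}{b}\in E}(\genleft{s}{t}{a}{b})+\sum_q\points{t}{q}$ and projecting onto each $(r,b)\leftarrow(p,a)$ graded component by multiplying on the left by $\points{r}{b}$ and on the right by $\points{p}{a}$, exactly as in the proof of Theorem \ref{THDig}, the triangle case $\arr{p}{r}\in E$ yields
\begin{equation*}
\mathbf{P}(p,q,r;a,b)=\delta_{\arr{a}{b}\in E}(\genleft{p}{r}{a}{b}) - \delta_{a,b}\bigl[(\genlefty{q}{r}{a})\sbt(\genlefty{p}{q}{a}) + (\genlefty{p}{r}{a})\bigr],
\end{equation*}
while the square case $\arr{p}{r}\notin E$ gives the same equation with the $\delta_{\arr{a}{b}\in E}$-term replaced by a $q$-independent expression built from $\dist{p}{r}$; subtracting for two choices $q,q'$ then produces $\mathbf{P}(p,q,r;a,b)-\mathbf{P}(p,q',r;a,b)$ in terms of the analogous $\delta_{a,b}$-correction.

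For \eqref{EqSujExt1}, substituting the spanning elements of $\ker(\wedge)$ and using $(x,\omega_{\arr{a}{b}})=[x,\ov{f_b}]\sbt\ov{f_a}$ together with the simplification $[e_{\arl{u}{v}},\ov{f_b}]\sbt\ov{f_a}=-\ov{f_b}\sbt e_{\arl{u}{v}}\sbt\ov{f_a}$ valid for $a\neq b$, the same idempotent projection reduces the relation to $\mathbf{P}(p,q,r;a,c)=0$ for every $(a,c)$ with $\arr{a}{c}\notin E$. For $a\neq c$ this supplies the $\arr{a}{b}\notin E$ portion of \eqref{EqTri2} and \eqref{EqSqr2}, and for $a=c$ it combines with the $\delta_{a,b}$-correction from the \eqref{EqFlat}-analysis to extract \eqref{EqTri1} in the triangle case and \eqref{EqSqr1} in the square case. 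The primed relations \eqref{EqTri1'}, \eqref{EqTri2'}, \eqref{EqSqr1'}, \eqref{EqSqr2'} follow by an entirely symmetric calculation, replacing \eqref{EqSujExt1} by \eqref{EqSujExt2} and \eqref{EqFlat} by the right-flatness identity \eqref{EqFlatRght} recalled in Section \ref{SBialSurj}, using $\cvr(1)=\sum_{\arr{a}{b}\in E}e_{\arl{a}{b}}\tn df_b$ to produce the right-handed generators $(\genright{a}{b}{c}{d})$ and $(\genrighty{c}{d}{p})$ in place of their unprimed counterparts. The main obstacle will be the combinatorial bookkeeping around $\dist{a}{c}$: whenever a path $\arrtwo{a}{b}{c}$ coincides with $\arrtwo{a}{\dist{a}{c}}{c}$, the wedge $\omega_{\arr{a}{b}}\wedge\omega_{\arr{b}{c}}$ must be rewritten over the basis of $\Omega^2_{\mathrm{top}}$, producing precisely the correction terms responsible for the $\delta_{\arr{a}{b}\in E}$ adjustments in the definitions of $\mathbf{P}$ and $\mathbf{Q}$, and ensuring these match coherently across the three relation families.
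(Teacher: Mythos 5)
Your overall strategy coincides with the paper's: reduce via Corollary \ref{CDXA} to the families \eqref{EqFlat}, \eqref{EqSujExt1}, \eqref{EqSujExt2} (plus \eqref{EqFlatRght} for the primed relations), evaluate them on the basis of $\XA^{2}$ and the spanning set of $\ker(\wedge)$, and project onto graded components with the idempotents $\points{r}{b}$, $\points{p}{a}$. However, there is a concrete computational gap that propagates into wrong relations in the square case. The pairing of the basis element $e_{\arltwo{p}{q}{r}}$ with an arbitrary wedge is $\evl^{2}(e_{\arltwo{p}{q}{r}}\tn\omega_{\arr{a}{b}}\wedge\omega_{\arr{b}{c}})=\delta_{a,p}.\delta_{c,r}.\big(\delta_{b,q}-\delta_{p\nrightarrow r}.\delta_{b,\dist{p}{r}}\big)f_{r}$, because when $p\nrightarrow r$ the non-basis wedge $\omega_{\arr{p}{\dist{p}{r}}}\wedge\omega_{\arr{\dist{p}{r}}{r}}$ must be rewritten as $-\sum_{b:\arrtwo{p}{b}{r}\in\mathbf{E}_{2}}\omega_{\arr{p}{b}}\wedge\omega_{\arr{b}{r}}$. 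You apply this correction to the quadratic term of \eqref{EqFlat} but not to the linear term: your displayed identity is missing $-\delta_{p\nrightarrow r}\,f_{r}\sbt\big(e_{\arl{p}{\dist{p}{r}}}+e_{\arl{\dist{p}{r}}{r}}\big)$ on the right-hand side, since the wedges occurring in $d\omega_{\arr{s}{t}}$ also pair against the $\dist{p}{r}$ correction. Without this term the square case reads $e_{\arl{q}{r}}\sbt e_{\arl{p}{q}}-e_{\arl{\dist{p}{r}}{r}}\sbt e_{\arl{p}{\dist{p}{r}}}=e_{\arl{q}{r}}$, which is not symmetric under $q\leftrightarrow\dist{p}{r}$ and does not produce \eqref{EqSqr1} or \eqref{EqSqr2}; the correct identity is $e_{\arl{q}{r}}-e_{\arl{q}{r}}\sbt e_{\arl{p}{q}}=e_{\arl{\dist{p}{r}}{r}}-e_{\arl{\dist{p}{r}}{r}}\sbt e_{\arl{p}{\dist{p}{r}}}$.

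The same oversight invalidates your treatment of \eqref{EqSujExt1}. That relation does \emph{not} reduce to $\mathbf{P}(p,q,r;a,c)=0$ for all $a\nrightarrow c$: carrying the correction through, one obtains $\mathbf{P}(p,q,r;a,c)-\delta_{p\nrightarrow r}\,\mathbf{P}(p,\dist{p}{r},r;a,c)=0$, which is a vanishing statement only in the triangle case $\arr{p}{r}\in E$ and is precisely the $q$-independence statement \eqref{EqSqr2} (restricted to $a\nrightarrow c$) in the square case. Imposing $\mathbf{P}=0$ for squares would present a strictly smaller quotient than $\ct{D}\XA$: in the isotopy quotient it would force $(\arl{q}{r})\sbt(\arl{p}{q})=0$ whenever $p\nrightarrow r$, collapsing the fundamental groupoid and contradicting Corollary \ref{CDXFund}. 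Once these corrections are inserted, the rest of your plan --- the idempotent projections and the symmetric treatment of \eqref{EqSujExt2} and \eqref{EqFlatRght} for the primed relations --- matches the paper's proof.
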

\begin{proof}
Note that $\ker (\wedge) = \ct{M}_{\mathrm{top}}$ is spanned by $\sum_{x:\arrtwo{x}{y}{z}\in E_{2}} f_{x}df_{y}\wedge df_{z}$, where $x,z\in V$ such that $x\nrightarrow z$. We first expand relation \eqref{EqSujExt1} for an arbitrary basis element $e_{\arltwo{p}{q}{r}}\in \XA^{2}$:
\begin{align*}
0=&\sum_{\substack{\arr{a}{b},\\ \arr{c}{d}}\in E}\left( \sum_{y:\arrtwo{x}{y}{z}\in E_{2}}\evl^{2}(e_{\arltwo{p}{q}{r}}\otimes \omega_{\arr{a}{b}}\wedge \omega_{\arr{c}{d}}) \sbt [e_{\arl{c}{d}},\ov{f_{z}}] \sbt [e_{\arl{a}{b}},\ov{f_{y}}]\sbt\ov{f_{x}} \right)
\\=&\sum_{\arrtwo{a}{b}{d}\in E_{2}}\left( \sum_{y:\arrtwo{x}{y}{z}\in E_{2}}\evl^{2}(e_{\arltwo{p}{q}{r}}\otimes \omega_{\arr{a}{b}}\wedge \omega_{\arr{b}{d}}) \sbt \ov{f_{z}}\sbt e_{\arl{b}{d}}\sbt \ov{f_{y}}\sbt e_{\arl{a}{b}}\sbt\ov{f_{x}} \right)
\\= &\sum_{y:\arrtwo{x}{y}{z}\in E_{2}} \ov{f_{z}}\sbt e_{\arl{q}{r}}\sbt \ov{f_{y}}\sbt e_{\arl{p}{q}}\sbt\ov{f_{x}} - \delta_{p\nrightarrow r} \sum_{y:\arrtwo{x}{y}{z}\in E_{2}} \ov{f_{z}}\sbt e_{\arl{\dist{p}{r}}{r}}\sbt \ov{f_{y}}\sbt e_{\arl{p}{\dist{p}{r}}}\sbt\ov{f_{x}}
\end{align*}
Changing generators, we obtain two sets of relations. Firstly, we notice that for a pair of vertices $x,z\in V$ such that $x\nrightarrow z$ and a path $\arrtwo{p}{q}{r}$, where $p\nrightarrow r$ we have that 
\begin{equation}\label{EqInProofSurj1}
\sum_{y:\arrtwo{x}{y}{z}\in E_{2}} (\genleft{q}{r}{y}{z})\sbt (\genleft{p}{q}{x}{y})= \sum_{y:\arrtwo{x}{y}{z}\in E_{2}} (\genleft{\dist{p}{r}}{r}{y}{z})\sbt (\genleft{p}{\dist{p}{r}}{x}{y})
\end{equation}
This demonstrates that $\mathbf{P}(p,q,r,a,b)$ is independent of $q$ when $a\nrightarrow b$, and proves relation \eqref{EqSqr2} when $p\nrightarrow r$. Secondly, for a pair of vertices $x,z\in V$ such that $x\nrightarrow z$ and a triangle $(p,q,r)$, we have that 
\begin{equation}\label{EqInProofSurj2}
\sum_{y:\arrtwo{x}{y}{z}\in E_{2}} (\genleft{q}{r}{y}{z})\sbt (\genleft{p}{q}{x}{y})= 0
\end{equation}
which demonstrates relation \eqref{EqTri2} for $a\nrightarrow b$. 

In a symmetric manner, relation \eqref{EqSujExt2} when written in terms of the alternative set of generators, becomes equivalent to relations \eqref{EqSqr2'} with $p\nrightarrow r$ and $a\nrightarrow b$, and \eqref{EqTri2'} with $a\nrightarrow b$. 

Now we look at relation \eqref{EqFlat} for an arbitrary basis element $e_{\arltwo{p}{q}{r}}\in \XA^{2}$:
\begin{equation}\label{EqFlatInPrf}
0= \sum_{\arr{s}{t}\in E}\evl^{2}(e_{\arltwo{p}{q}{r}}\otimes d\omega_{\arr{s}{t}})\sbt e_{\arl{s}{t}} - \sum_{\arrtwo{a}{b}{c}\in E_{2}} \evl^{2}(e_{\arltwo{p}{q}{r}}\otimes \omega_{\arr{a}{b}}\wedge\omega_{\arr{b}{c}}) \sbt e_{\arl{b}{c}}\sbt e_{\arl{a}{b}} 
\end{equation}
We will simplify \eqref{EqFlatInPrf} in two cases. First if $\arr{p}{r}\in E$: 
\begin{align*}
0=& \sum_{\arr{s}{t}\in E}(\delta_{s,q}.\delta_{r,t} + \delta_{s,p}.\delta_{t,q}- \delta_{p,s}\delta_{r,t})f_{r}\sbt e_{\arl{s}{t}}  -  \sum_{\arrtwo{a}{b}{c}\in E_{2}}\delta_{a,p}.\delta_{b,q}.\delta_{c,r}f_{c} \sbt e_{\arl{b}{c}}\sbt e_{\arl{a}{b}} 
\\=& e_{\arl{q}{r}}+ f_{r}\sbt e_{\arl{p}{q}} - e_{\arl{p}{r}} - e_{\arl{q}{r}}\sbt e_{\arl{p}{q}} = e_{\arl{q}{r}} \sbt ( 1-  e_{\arl{p}{q}})- e_{\arl{p}{r}}
\end{align*}
Now we change generators as in the proof of Theorem \ref{THDig}: 
 \begin{align*}
0=&\left(\sum_{a\in V}(\genlefty{q}{r}{a})-\sum_{\arr{a}{b}\in E}(\genleft{q}{r}{a}{b}) +\sum_{a\in V}\points{r}{a} \right)  \sbt  \left( \sum_{c,d\in V} \points{c}{d}- \sum_{c\in V}(\genlefty{p}{q}{c})+\sum_{\arr{c}{d}\in E}(\genleft{p}{q}{c}{d}) -\sum_{c\in V}\points{q}{c} \right) 
\\&- \sum_{a\in V}(\genlefty{p}{r}{a})+\sum_{\arr{a}{b}\in E}(\genleft{p}{r}{a}{b}) -\sum_{a\in V}\points{r}{a} 
\\=&\sum_{a\in V}\points{r}{a} - \sum_{a\in V}(\genlefty{q}{r}{a})\sbt (\genlefty{p}{q}{a})+\sum_{\arr{a}{b}\in E}(\genlefty{q}{r}{b})\sbt (\genleft{p}{q}{a}{b})+ \sum_{\arr{a}{b}\in E}(\genleft{q}{r}{a}{b})\sbt (\genlefty{p}{q}{a})- \sum_{\arrtwo{a}{b}{c}\in E_{2}}(\genleft{q}{r}{b}{c})\sbt (\genleft{p}{q}{a}{b})
\\&- \sum_{a\in V}(\genlefty{p}{r}{a})+\sum_{\arr{a}{b}\in E}(\genleft{p}{r}{a}{b}) -\sum_{a\in V}\points{r}{a} 
\end{align*} 
Using \eqref{EqInProofSurj2}, we can further simplify the relation: 
\begin{align*}
0=& - \sum_{a\in V}(\genlefty{q}{r}{a})\sbt (\genlefty{p}{q}{a})+\sum_{\arr{a}{b}\in E}(\genlefty{q}{r}{b})\sbt (\genleft{p}{q}{a}{b})+ \sum_{\arr{a}{b}\in E}(\genleft{q}{r}{a}{b})\sbt (\genlefty{p}{q}{a})- \sum_{\substack{i:\arrtwo{a}{i}{b}\in E_{2}\\ \arr{a}{b}\in E}}(\genleft{q}{r}{i}{b})\sbt (\genleft{p}{q}{a}{i})
\\&- \sum_{a\in V}(\genlefty{p}{r}{a})+\sum_{\arr{a}{b}\in E}(\genleft{p}{r}{a}{b})
\end{align*} 
Hence, we observe from the path algebra structure that the relation is equivalent to the following the equalities holding for arbitrary $\arr{a}{b}\in E$ and $c\in V$: 
\begin{align*}
\sum_{i:\arrtwo{a}{i}{b}\in E_{2}}(\genleft{q}{r}{i}{b})\sbt (\genleft{p}{q}{a}{i})- (\genlefty{q}{r}{b})\sbt (\genleft{p}{q}{a}{b})- (\genleft{q}{r}{a}{b})\sbt (\genlefty{p}{q}{a})= (\genleft{p}{r}{a}{b}), \quad (\genlefty{q}{r}{c})\sbt (\genlefty{p}{q}{c})= - (\genlefty{p}{r}{c})
\end{align*}
which agree with relation \eqref{EqTri2} when $\arr{a}{b}\in E$ and \eqref{EqTri1}, respectively. 

Now we expand relation \eqref{EqFlatInPrf} in the case where ${p}\nrightarrow {r}$: 
\begin{align*}
0&= ( \delta_{s,q} - \delta_{s, \dist{p}{r}} - \delta_{\arr{p}{r}}.\delta_{s,p}) . f_{r}\sbt e_{\arl{s}{r}}   - ( \delta_{b,q} -\delta_{b, \dist{p}{r}} ). f_{r}\sbt e_{\arl{b}{r}}\sbt e_{\arl{p}{b}}
\\&= e_{\arl{q}{r}} -    e_{\arl{\dist{p}{r}}{r}} - e_{\arl{q}{r}}\sbt e_{\arl{p}{q}} + e_{\arl{\dist{p}{r}}{r}}\sbt e_{\arl{p}{\dist{p}{r}}} 
\\  &\Longleftrightarrow\  \ e_{\arl{\dist{p}{r}}{r}}- e_{\arl{\dist{p}{r}}{r}}\sbt e_{\arl{p}{\dist{p}{r}}} = e_{\arl{q}{r}} - e_{\arl{q}{r}}\sbt e_{\arl{p}{q}}
\end{align*} 
From our previous computations we know that $e_{\arl{q}{r}} - e_{\arl{q}{r}}\sbt e_{\arl{p}{q}}$ can be written as 
\begin{equation*}
\sum_{a\in V}\points{r}{a} - \sum_{a\in V}(\genlefty{q}{r}{a})\sbt (\genlefty{p}{q}{a})+\sum_{\arr{a}{b}\in E}(\genlefty{q}{r}{b})\sbt (\genleft{p}{q}{a}{b})+ \sum_{\arr{a}{b}\in E}(\genleft{q}{r}{a}{b})\sbt (\genlefty{p}{q}{a})- \sum_{\arrtwo{a}{b}{c}\in E_{2}}(\genleft{q}{r}{b}{c})\sbt (\genleft{p}{q}{a}{b})
\end{equation*} 
after changing generators. Note that \eqref{EqInProofSurj1} implies 
\begin{align*}\sum_{b:\arrtwo{a}{b}{a}\in E_{2}}(\genleft{q}{r}{b}{a})\sbt (\genleft{p}{q}{a}{b})=\sum_{b:\arrtwo{a}{b}{a}\in E_{2}}(\genleft{\dist{p}{q}}{r}{b}{a})\sbt (\genleft{p}{\dist{p}{q}}{a}{b})\end{align*}
Hence, we can expand the above equality on either side and we precisely obtain relations 
\begin{align}
 (\genlefty{q}{r}{a})\sbt (\genlefty{p}{q}{a}) = (\genlefty{\dist{p}{r}}{r}{a})\sbt (\genlefty{p}{\dist{p}{r}}{a}), \quad \mathbf{P}(p,q,r,a,b)= \mathbf{P}(p,\dist{p}{r},r,a,b)
\end{align}
which demonstrate relations \eqref{EqSqr1} and \eqref{EqSqr2} in the case where $p\nrightarrow r$. Note that relation \eqref{EqSqr1} follows automatically from \eqref{EqTri1} when $\arr{p}{r}\in E$.

At this point we have covered all relations which must be imposed on $H\XA^{1}$ to obtain $\ct{D}\XA$. Recall that \eqref{EqFlatRght} holds automatically in this case. We obtain the remaining relations in Theorem \ref{TDXDigraph} by expanding \eqref{EqFlatRght}. Note that for $e_{\arltwo{p}{q}{r}}\in \XA^{2}$, \eqref{EqFlatRght} expands as
\begin{align*}
0=&\sum_{\arr{a}{b}\in E} \ov{\evr^{2}(d\omega_{\arr{a}{b}}\otimes e_{\arltwo{p}{q}{r}}) }\sbt\ov{e_{\arl{a}{b}}}+ \sum_{\arrtwo{a}{b}{c}\in E}\ov{\evr^{2}(\omega_{\arr{a}{b}}\wedge \omega_{\arr{b}{c}}\otimes e_{\arltwo{p}{q}{r}})}\sbt \ov{e_{\arl{a}{b}}}\sbt \ov{e_{\arl{b}{c}}}
\\=&\ov{e_{\arl{p}{q}}} + \ov{e_{\arl{p}{q}}}\sbt \ov{e_{\arl{q}{r}}} - \delta_{\arr{p}{r}} \ov{e_{\arl{p}{r}}} - \delta_{p\nrightarrow r} ( \ov{e_{\arl{p}{\dist{p}{r}}}} + \ov{e_{\arl{p}{\dist{p}{r}}}}\sbt \ov{e_{\arl{\dist{p}{r}}{r}}})
\end{align*}
Additionally, we obtain the following expansion in terms of alternate generators: 
\begin{align*}
\ov{e_{\arl{p}{q}}}& + \ov{e_{\arl{p}{q}}}\sbt \ov{e_{\arl{q}{r}}}= \ov{e_{\arl{p}{q}}}( 1+ \ov{e_{\arl{p}{q}}})
\\ =&\left(\sum_{c\in V}(\genrighty{p}{q}{c})+ \sum_{\arr{a}{b}\in E}(\genright{a}{b}{p}{q}) -\sum_{c\in V}\points{c}{p}  \right) \sbt \left( \sum_{c,d\in V}\points{d}{c}+\sum_{c\in V}(\genrighty{q}{r}{c})+ \sum_{\arr{a}{b}\in E}(\genright{a}{b}{q}{r}) -\sum_{c\in V}\points{c}{q} \right) 
\\=&\sum_{c\in V}(\genrighty{p}{q}{c})\sbt (\genrighty{q}{r}{c})+ \sum_{\arr{a}{b}\in E}(\genrighty{p}{q}{a})\sbt(\genright{a}{b}{q}{r}) +\sum_{\arr{a}{b}\in E}(\genright{a}{b}{p}{q})\sbt (\genrighty{q}{r}{b})+ \sum_{\arrtwo{a}{b}{c}\in E_{2}}(\genright{a}{b}{p}{q})\sbt(\genright{b}{c}{q}{r})  -\sum_{c\in V}\points{c}{p} 
\end{align*}
From the above expansions, it is easy to check that relation \eqref{EqFlatRght} is equivalent to relations \eqref{EqSqr1'}, \eqref{EqTri1'} and the remaining cases of \eqref{EqSqr2'}, \eqref{EqTri2'}.
\end{proof}
Finally, let us comment on whether $\ct{D}\XA$ will admit an antipode. Rather than checking whether the relations in Theorem \ref{TDXDigraph} are $S$-stable, which is a lengthy task, we will apply Theorem 5.5 of \cite{ghobadi2020hopf}, which states additional conditions on $\Upsilon$ which must hold. As recalled in Section \ref{SBialSurj}, we will only need to check equation \eqref{EqUpFla} in $A$ for arbitrary $x^{2}\in \XA^{2}$. Since for the digraph calculus $\Upsilon$ is defined by $\Upsilon (e_{\arl{a}{b}})= f_{a}-f_{b}$, we can simplify equation \eqref{EqUpFla} for $e_{\arltwo{p}{q}{r}}\in \XA^{2}$ as follows: 
\begin{align*}
\sum_{\arr{a}{b}\in E} &\Upsilon (\evl^{2}(e_{\arltwo{p}{q}{r}}\tn d\omega_{\arr{a}{b}}).e_{\arl{a}{b}} ) +\sum_{\arrtwo{a}{b}{c}\in E_{2}}\Upsilon \big(\Upsilon(\evl^{2}(e_{\arltwo{p}{q}{r}}\tn \omega_{\arr{a}{b}}\wedge \omega_{\arr{b}{c}}).e_{\arl{b}{c}} ).e_{\arl{a}{b}}\big)
\\&=\Upsilon (e_{\arl{q}{r}}- \delta_{p\nrightarrow r}  e_{\arl{\dist{p}{r}}{r}}- \delta_{\arr{p}{r}} e_{\arl{p}{r}}) +\Upsilon \big(\Upsilon(e_{\arl{q}{r}} ).e_{\arl{p}{q}}- \delta_{p\nrightarrow r} \Upsilon(e_{\arl{\dist{p}{r}}{r}}).e_{\arl{p}{\dist{p}{r}}}\big)
\\&=f_{q}-f_{r} -  \delta_{p\nrightarrow r} ( f_{\dist{p}{r}} -f_{r})- \delta_{\arr{p}{r}}( f_{p}-f_{r}) +\Upsilon \big(e_{\arl{p}{q}}- \delta_{p\nrightarrow r} .e_{\arl{p}{\dist{p}{r}}}\big)
\\&=f_{q}-f_{r} +f_{p}-f_{q} -  \delta_{p\nrightarrow r} ( f_{\dist{p}{r}} -f_{r} +f_{p}-f_{\dist{p}{r}})- \delta_{\arr{p}{r}}( f_{p}-f_{r}) 
\\&=f_{p}-f_{r} -  \delta_{p\nrightarrow r} ( f_{p} -f_{r} )- \delta_{\arr{p}{r}}( f_{p}-f_{r}) =0
\end{align*}
Hence the antipode of $H\XA^{1}$, as described in Corollary \ref{CDigAnti} carries over to $\ct{D}\XA$. Note that we have done the above calculation for the calculus of an arbitrary quiver without cycles in Example 5.7 in \cite{ghobadi2020hopf}, with the choice of $\Omega^{2}_{\mathrm{max}}$. As detailed in Remark \ref{ROmega2s}, this calculation does not apply to our setting here, due to the choice of $\Omega^{2}_{\rm top}$. 
\subsection{Fundamental Groupoid of Digraph}\label{SIsoDigraph}
In this section we recover the groupoid ring, see Example \ref{ExGrpd}, of the fundamental groupoid of a digraph as the isotropy quotient of $\ct{D}\XA$ which was constructed in the previous section. 

First we consider the isotopy quotient of $H\XA^{1}$, which will again be a Hopf algebroid over the base algebra $\K (V)$. The isotopy quotient is obtained by imposing the additional relation $f_{q}=\ov{f_{q}}$ for all $q\in V$. Thereby, in terms of our path algebra presentation of $H\XA^{1}$ from Theorem \ref{THDig}, the relation imposes $\points{a}{b}=\delta_{a,b} \points{a}{a}$ for arbitrary $a,b\in V$. From the algebra structure, we conclude that 
\begin{align*}(\genlefty{a}{b}{c})=\points{a}{c}\sbt (\genlefty{a}{b}{c})\sbt \points{b}{c} =\delta_{a,c}.\delta_{b,c} (\genlefty{a}{b}{c})=0
\end{align*}
Similarly, we observe that $(\genlefty{a}{b}{c})= 0$ and $ (\genleft{a}{b}{c}{d})=\delta_{a,c}.\delta_{b,d}(\genleft{a}{b}{a}{b})$ and $ (\genright{a}{b}{c}{d})=\delta_{a,c}.\delta_{b,d} (\genright{a}{b}{a}{b})$. Hence we relabel the non-vanished generators in $\iso{H\XA^{1}}$: 
\begin{equation}
(p):= \points{p}{p}, \quad (\arl{a}{b}):=(\genleft{a}{b}{a}{b}), \quad (\arr{a}{b}) =(\genright{a}{b}{a}{b})
\end{equation}
and obtain a presentation for $\iso{H\XA^{1}}$.
\begin{corollary}\label{CIsoHXDigraph} The Hopf algebroid $\iso{H\XA^{1}}$ is isomorphic to the groupoid algebra $\K \ct{G}_{D}$, where $\ct{G}_{D}$ is the free groupoid corresponding to the digraph, where we have added an inverse for each arrow in the digraph and arrows corresponding to all possible new paths.
\end{corollary}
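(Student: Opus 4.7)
The plan is to take the presentation of $H\XA^{1}$ as a quotient of the path algebra $\K D^{\mathrm{db}}$ given by Theorem \ref{THDig} and determine which relations survive after imposing the isotopy ideal $\langle \source(a)-\target(a)\rangle_{a\in A}$, i.e.\ after imposing $f_{p}=\ov{f_{p}}$ for every $p\in V$. As already observed in the paragraph preceding the corollary, this collapse forces $\points{p}{q}=\delta_{p,q}(p)$ and hence kills all generators of the form $(\genlefty{a}{b}{c})$ and $(\genrighty{c}{d}{p})$, while leaving $(\genleft{a}{b}{c}{d})=\delta_{a,c}\delta_{b,d}(\arl{a}{b})$ and $(\genright{a}{b}{c}{d})=\delta_{a,c}\delta_{b,d}(\arr{a}{b})$. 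So $\iso{H\XA^{1}}$ is generated by $\{(p)\}_{p\in V}\cup\{(\arl{a}{b}),(\arr{a}{b})\}_{\arr{a}{b}\in E}$, and it only remains to read off what relations these new generators satisfy.

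First, I would record the surviving path-algebra grading: the identities $\points{p}{q}\sbt(\genleft{a}{b}{c}{d})\sbt\points{e}{f}=\delta_{p,b}\delta_{q,d}\delta_{e,a}\delta_{f,c}(\genleft{a}{b}{c}{d})$ and its analogue for $(\genright{a}{b}{c}{d})$ translate into $(b)\sbt(\arl{a}{b})\sbt(a)=(\arl{a}{b})$ and $(a)\sbt(\arr{a}{b})\sbt(b)=(\arr{a}{b})$. So $(\arl{a}{b})$ behaves as an arrow from $a$ to $b$ and $(\arr{a}{b})$ as an arrow from $b$ to $a$. Next I would substitute into the four families of relations in Theorem \ref{THDig}. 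Relation \eqref{EqDigH3} becomes trivial ($0=0$) because every $(\genlefty{\cdot})$ and $(\genrighty{\cdot})$ vanishes. Relation \eqref{EqDigH1}, after substitution, gives a single non-trivial content, namely that for each $\arr{a}{b}\in E$
\begin{equation*}
(\arr{a}{b})\sbt (\arl{a}{b}) = (a),
\end{equation*}
and symmetrically \eqref{EqDigH2} reduces to $(\arl{a}{b})\sbt(\arr{a}{b})=(b)$; the delta-function bookkeeping on the indices $a,b,c,d,p,q$ makes all other components vanish identically on both sides. Thus the only remaining relations are the orthogonal-idempotent structure on the $(p)$'s, the source-target grading of $(\arl{a}{b})$ and $(\arr{a}{b})$, and the two-sided inverse relations just described.

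To finish I would construct an explicit algebra isomorphism with $\K\ct{G}_{D}$. Let $\ct{G}_{D}$ be the free groupoid on the digraph, with object set $V$ and morphisms freely generated by the arrows $(\arl{a}{b})$ for $\arr{a}{b}\in E$ together with formally adjoined inverses $(\arr{a}{b})$. Sending $(p)\mapsto \id_{p}$, $(\arl{a}{b})\mapsto (\arl{a}{b})$, $(\arr{a}{b})\mapsto(\arl{a}{b})^{-1}$ gives a well-defined algebra map $\iso{H\XA^{1}}\to\K\ct{G}_{D}$ because the relations above are tautologically satisfied in any groupoid algebra. Conversely, the universal property of the free groupoid produces an inverse map: the defining relations of $\ct{G}_{D}$ are exactly source/target compatibility and formal invertibility of the generators, and each of these has just been shown to hold in $\iso{H\XA^{1}}$. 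Composing the two maps on generators gives the identity in both directions.

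The main obstacle I anticipate is not conceptual but combinatorial: being careful with the index contractions when collapsing the sums in \eqref{EqDigH1}, \eqref{EqDigH2}, \eqref{EqDigH3} under the $\delta_{a,c}\delta_{b,d}$ factors, so as to confirm that no hidden identity beyond the inverse relations is produced. In particular, one must verify that the $q\in V$ and $\arr{s}{t}\in E$ parameter choices in each relation contribute only redundant copies of the same inverse identity, never a new constraint. Once this bookkeeping is in place, the isomorphism with $\K\ct{G}_{D}$ is immediate from the universal property of the free groupoid.
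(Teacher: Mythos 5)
Your proposal follows essentially the same route as the paper: collapse the generators under $f_{p}=\ov{f_{p}}$, observe that \eqref{EqDigH3} becomes vacuous while \eqref{EqDigH1} and \eqref{EqDigH2} reduce to the two-sided inverse relations $(\arr{a}{b})\sbt(\arl{a}{b})=(a)$ and $(\arl{a}{b})\sbt(\arr{a}{b})=(b)$, and then identify the result with the free groupoid algebra. The index bookkeeping you flag as the main obstacle is exactly what the paper's displayed computation carries out, and your universal-property argument for the algebra isomorphism is fine.

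The one thing your proof omits is that the statement asserts an isomorphism of \emph{Hopf algebroids}, not merely of algebras. You need a final step checking that the coproduct, counit, source, target and antipode of $\iso{H\XA^{1}}$ project to the groupoid-ring structure of Example \ref{ExGrpd}. The paper disposes of this in one sentence: by Proposition \ref{prop:quotHpfalg} and Lemma \ref{lemma:isoIdeal} the Hopf algebroid structure on the isotopy quotient is the projection of that of $H\XA^{1}$, and then Corollaries \ref{CDiaBialg} and \ref{CDigAnti} show that, e.g., $\Delta((\arl{a}{b}))=(\arl{a}{b})\tn(\arl{a}{b})$ and $S((\arl{a}{b}))=(\arr{a}{b})$ after the collapse, which is precisely the groupoid-ring structure. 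This is routine but should be stated, since without it you have only proved the algebra-level claim.
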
 
\begin{proof} While relations \eqref{EqDigH3} vanish in $\iso{H\XA^{1}}$, relations \eqref{EqDigH1} and \eqref{EqDigH2}, both simply reduce to showing $(\arl{a}{b})$ and $(\arr{a}{b})$ are inverses e.g. the left hand equation in \eqref{EqDigH1}: 
\begin{align*}
\sum_{i: \arr{q}{i} \in E}(\genright{a}{b}{q}{i})\sbt (\genleft{p}{b}{q}{i}) =\delta_{i,b}\sum_{i: \arr{q}{i} \in E}(\genright{a}{b}{q}{i})\sbt (\genleft{p}{b}{q}{i})= (\genright{a}{b}{q}{b})\sbt (\genleft{p}{b}{q}{b})= \delta_{q,p}.\delta_{q,a} (\arr{a}{b})\sbt (\arl{a}{b}) = \delta_{a,p}.\delta_{a,q} (a)
\end{align*}
A similar argument on \eqref{EqDigH2} shows that $(\arl{a}{b})\sbt (\arr{a}{b})= (b)$. This shows that $\iso{H\XA^{1}}\cong \K \ct{G}_{D}$ as an algebra where $(p)$ are simply taking the place of $\id_{p}$ for $p\in V$. Proposition \ref{prop:quotHpfalg} and Lemma \ref{lemma:isoIdeal} tell us that the Hopf algebroid structure of $\iso{H\XA^{1}}$ is precisely the projection of the Hopf algebroid structure of $H\XA^{1}$ and it is easy to see from Corollaries \ref{CDiaBialg} and \ref{CDigAnti} that we recover the groupoid ring Hopf algebroid structure from Example \ref{ExGrpd}. 
\end{proof}
\begin{corollary}\label{CDXFund} The Hopf algebroid $\iso{\ct{D}\XA}$ is isomorphic to the groupoid algebra $\K \Pi_{D}$, where $\Pi_{D}$ is the fundamental groupoid of the digraph $D$ as described in the introduction.
\end{corollary}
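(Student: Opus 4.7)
The plan is to leverage Corollary~\ref{CIsoHXDigraph} together with Corollary~\ref{CIsoDXA} to pass directly from the presentation of $\ct{D}\XA$ in Theorem~\ref{TDXDigraph} to a presentation of $\iso{\ct{D}\XA}$. Since the isotropy quotient commutes with taking a further quotient, $\iso{\ct{D}\XA}$ equals the quotient of $\iso{H\XA^{1}}\cong\K\ct{G}_{D}$ by the images of the additional relations listed in Theorem~\ref{TDXDigraph}. Thus the entire statement reduces to checking that these images correspond precisely to the triangle and square relations defining $\Pi_{D}$.

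First I would recall that in $\iso{H\XA^{1}}$ all generators of the form $(\genlefty{a}{b}{c})$ and $(\genrighty{a}{b}{c})$ vanish (as observed before Corollary~\ref{CIsoHXDigraph}), which kills the second summand of $\mathbf{P}(p,q,r;a,b)$ and $\mathbf{Q}(p,q,r;a,b)$. Using that $(\genleft{q}{r}{i}{b})=\delta_{q,i}\delta_{r,b}(\arl{q}{r})$ and $(\genleft{p}{q}{a}{i})=\delta_{p,a}\delta_{q,i}(\arl{p}{q})$ in the isotropy quotient, one immediately computes
\[
\mathbf{P}(p,q,r;a,b)\equiv\delta_{p,a}\,\delta_{r,b}\,\delta_{\arrtwo{p}{q}{r}\in E_{2}}\;(\arl{q}{r})\sbt(\arl{p}{q})
\]
and symmetrically for $\mathbf{Q}$ with $(\arr{a}{b})$'s. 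Relations \eqref{EqSqr1}, \eqref{EqSqr1'}, \eqref{EqTri1}, \eqref{EqTri1'} all become trivial, while \eqref{EqSqr2} reduces to
\[
(\arl{q}{r})\sbt(\arl{p}{q})=(\arl{q'}{r})\sbt(\arl{p}{q'})
\]
for every square $(p,q,q',r)$, and \eqref{EqTri2} reduces to
\[
(\arl{q}{r})\sbt(\arl{p}{q})=(\arl{p}{r})
\]
for every triangle $(p,q,r)$. These are precisely the defining relations of $\Pi_{D}$.

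Next, I would verify that \eqref{EqSqr2'} and \eqref{EqTri2'} produce no new content. Since in $\iso{H\XA^{1}}$ the generators $(\arl{a}{b})$ and $(\arr{a}{b})$ are mutually inverse in the free groupoid $\ct{G}_{D}$ (Corollary~\ref{CIsoHXDigraph}), the expression $\mathbf{Q}(p,q,r;a,b)$ reduces to $(\arr{p}{q})\sbt(\arr{q}{r})$, which is the inverse of $(\arl{q}{r})\sbt(\arl{p}{q})$, so \eqref{EqSqr2'} and \eqref{EqTri2'} follow formally from \eqref{EqSqr2} and \eqref{EqTri2} once one quotients by an isotropy ideal. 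This shows the algebra isomorphism $\iso{\ct{D}\XA}\cong\K\Pi_{D}$. Finally, Proposition~\ref{prop:quotHpfalg}, Proposition~\ref{prop:HopfBasechng} and Lemma~\ref{lemma:isoIdeal} guarantee that the Hopf algebroid structure on $\iso{\ct{D}\XA}$ is inherited from $\ct{D}\XA$, and comparing the formulas from Corollaries~\ref{CDiaBialg} and~\ref{CDigAnti} (in their projected form, where $\points{p}{q}\mapsto\delta_{p,q}\id_{p}$ and $(\arl{a}{b})$ becomes group-like with inverse $(\arr{a}{b})$) against Example~\ref{ExGrpd} exhibits this as the groupoid ring structure of $\Pi_{D}$.

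The main obstacle I anticipate is bookkeeping rather than conceptual: carefully tracking how the various indices in $\mathbf{P}$ and $\mathbf{Q}$ collapse under the isotropy quotient to guarantee that the only surviving constraints are indexed by honest triangles and squares, and verifying that the primed relations do not contribute genuinely new identities. A minor subtlety is that the relations \eqref{EqSujExt1}--\eqref{EqSujExt2} are absorbed into the square/triangle presentation of Theorem~\ref{TDXDigraph}, so one should be careful to use the latter as the starting point rather than revisiting the former directly.
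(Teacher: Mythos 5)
Your proposal is correct and follows essentially the same route as the paper: project the relations of Theorem \ref{TDXDigraph} into $\iso{H\XA^{1}}\cong\K\ct{G}_{D}$, observe that the $(\genlefty{a}{b}{c})$- and $(\genrighty{a}{b}{c})$-generators vanish so that $\mathbf{P}$ collapses to $(\arl{q}{r})\sbt(\arl{p}{q})$, and identify the surviving relations with the triangle and square relations of $\Pi_{D}$. Your explicit remark that the primed relations \eqref{EqSqr2'} and \eqref{EqTri2'} carry no new content because $\mathbf{Q}$ reduces to the inverse expression is a small but welcome clarification that the paper leaves implicit.
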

\begin{proof} The proof is simply writing out the projection of the relations in Theorem \ref{TDXDigraph} in the isotopy quotient. In the last result we saw that generators of the form $(\genlefty{a}{b}{q})$ and $(\genrighty{a}{b}{q})$ vanish. Hence, relations \eqref{EqTri1} and \eqref{EqSqr1} simply vanish in $\iso{\ct{D}\XA}$. Observe that the expression $\mathbf{P}(p,q,r,a,b)$ simplifies in $\iso{H\XA^{1}}$ as 
\begin{align*}
\mathbf{P}(p,q,r,a,b)=& \sum_{i:\arrtwo{a}{i}{b}\in E_{2}}(\genleft{q}{r}{i}{b})\sbt (\genleft{p}{q}{a}{i})- \delta_{\arr{a}{b}}.\left( (\genlefty{q}{r}{b})\sbt (\genleft{p}{q}{a}{b})+ (\genleft{q}{r}{a}{b})\sbt (\genlefty{p}{q}{a})\right)
\\=&\sum_{i:\arrtwo{a}{i}{b}\in E_{2}}\delta_{q,i}.\delta_{r,b}(\genleft{q}{r}{i}{b})\sbt (\genleft{p}{q}{a}{i}).\delta_{a, p}= (\genleft{q}{r}{q}{r})\sbt (\genleft{p}{q}{p}{q})= (\arl{q}{r})\sbt (\arl{p}{q})
\end{align*}
Therefore, given a triangle $(p,q,r)$ in the digraph, relation \eqref{EqTri2} simplifies as follows 
$(\arl{q}{r})\sbt (\arl{p}{q})= (\arl{p}{r})$, while given a square $(p,q,q',r)$, relation \eqref{EqSqr2} simplifies to $(\arl{q}{r})\sbt (\arl{p}{q})= (\arl{q'}{r})\sbt (\arl{p}{q'})$. Hence, the relations on $\ct{D}\XA$ simplify to the precise relations defining the the fundamental groupoid of the digraph, when projected in the isotopy quotient, $\iso{\ct{D}\XA}$.
\end{proof}
\bibliographystyle{plain}
\bibliography{Hopf}
\end{document}